\newcommand\RR{\ensuremath{\mathbb{R}}}
\newcommand\CC{\ensuremath{\mathbb{C}}}
\newcommand\NN{\ensuremath{\mathbb{N}}}
\newcommand\eps{\ensuremath{\varepsilon}}
\newcommand\pot{\ensuremath{A_{a^-,a^+}}}
\newcommand\dpp{\ensuremath{\ddot{\mathcal{P}}}}
\newcommand\axis{\ensuremath{\mathcal{R}}}
\newcommand\ic{\ensuremath{\mathcal{I}_c}}
\newcommand\Rm{\ensuremath{\mathcal R_-}}
\newcommand\dpOm{\ensuremath{\ddot{\Omega}}}
\newcommand\HA{\ensuremath{H_A}}
\newtheorem{thm}{Theorem}[section]
\newtheorem{prop}[thm]{Proposition}
\newtheorem{cor}[thm]{Corollary}
\newtheorem{lem}[thm]{Lemma}
\newtheorem{dfn}[thm]{Definition}
\newtheorem{rem}[thm]{Remark}
\let\@fnsymbol\@alph
\begin{document}

 \title{Spectral stability under removal of small capacity sets and
  applications to Aharonov-Bohm operators}
\author{L. Abatangelo\footnote{
Dipartimento di Matematica e Applicazioni,
 Universit\`a di Milano--Bicocca,
Via Cozzi 55, 20125 Milano, Italy,
\texttt{laura.abatangelo@unimib.it}}, V. Felli\footnote{
Dipartimento di Scienza dei Materiali,
 Universit\`a di Milano--Bicocca,
Via Cozzi 55, 20125 Milano, Italy,
\texttt{veronica.felli@unimib.it}}, L. Hillairet\footnote{
MAPMO (UMR6628), Universit\'e d'Orl\'eans, 45067 Orl\'eans, France,
\texttt{luc.hillairet@univ-orleans.fr}},
C. L\'ena\footnote{Dipartimento di Matematica \emph{Giuseppe Peano},
  Universit\`a di Torino, Via Carlo Alberto 10, 10123 Torino, Italy, 
\texttt{clena@unito.it}}}

\maketitle

\begin{abstract}
 We first  establish a sharp relation between the order of vanishing of a Dirichlet
eigenfunction at a point and the leading term of the  asymptotic expansion of the
Dirichlet eigenvalue variation, as a removed compact set concentrates
at that point. Then we apply this spectral stability result to the
study of the asymptotic behaviour of eigenvalues of  Aharonov-Bohm
operators  with two colliding  poles moving  on an axis of symmetry  of the
domain. 
\end{abstract}

\paragraph{Keywords.} Asymptotics of eigenvalues, small capacity
sets,  Aharonov-Bohm
operators.

\paragraph{MSC classification.}  	Primary: 35P20; Secondary:
31C15, 35P15, 35J10.

\section{Introduction and main results}

The present paper is concerned with asymptotic estimates of the
eigenvalue variation under either removal of small sets from the
domain or operator variations due to moving  poles of singular
coefficients. More precisely, in the first part of the paper  we will
investigate the relation between the order of vanishing of a Dirichlet
eigenfunction at a point and the leading term of the  asymptotic expansion of the
Dirichlet eigenvalue variation, as a removed compact set concentrates
at that point. In the second part of the paper we will consider  
Aharonov-Bohm operators  with two poles lying on the symmetry axis of an
axially-symmetric domain
and study the asymptotic behaviour of eigenvalues as the poles move
coalescing into a fixed point. A spectral equivalence between this
class of Aharonov-Bohm operators  and the Dirichlet Laplacian will be established,
once the poles' joining segment has been removed. Thus sharp
expansions for the Aharonov-Bohm operators will be derived from those obtained in the
first part of the paper. 

\subsection{Eigenvalue variation estimates under removal of small
  capacity sets}

It is well-known that the spectrum of the
Dirichlet Laplacian on a bounded domain $\Omega\subset\RR^n$  does not change 
when a zero capacity compact set is removed from $\Omega$, see e.g. \cite{RT}. 
In the first part of the present paper we are interested in spectral stability of the Dirichlet
Laplacian and estimates of the eigenvalue variations when the domain
is perturbed by removing sets of small capacity:  we mean the
possibility that, if $K\subset \Omega$ is a compact set,  
 the $N$-th Dirichlet eigenvalue
   $\lambda_N(\Omega\setminus K)$ in $\Omega\setminus K$ may be close to $\lambda_N(\Omega)$
if (and only if) the capacity of $K$ in $\Omega$ is close to zero.
The seminal work \cite{RT} excited much interest and now a wide literature deals with this topic, 
showing that a perturbation theory can be developed in this situation.

We consider a bounded, connected open set $\Omega\subset\RR^n$. Let $K\subset \Omega$ be a compact set.
The (condenser) capacity of $K$ in $\Omega$
 is defined as 
\begin{equation}\label{eq:cap}
 \mbox{\rm Cap}_{\Omega}(K)= \inf\left\{\int_{\Omega}|\nabla f|^2:\ 
f\in H^1_0(\Omega)\text{ and }f-\eta_K\in  H^1_0(\Omega\setminus K)\right\},
\end{equation}
where $\eta_K$ is a fixed smooth function such that $\mathop{\rm
  supp}\eta_K\subset\Omega$ and $\eta_K\equiv 1$ in a neighborhood of~$K$. 
It is easy to prove  that the infimum \eqref{eq:cap} 
is achieved by a function $V_K \in H^1_0(\Omega)$ such that $V_K-\eta_K\in H^1_0(\Omega\setminus K)$, so that 
\begin{equation}\label{eq:cap2}
 \mbox{\rm Cap}_{\Omega}(K)=\int_{\Omega}\left|\nabla V_K\right|^2\,dx,
\end{equation}
where $V_K$ (capacitary potential) is the unique solution of the Dirichlet problem
\begin{equation}
\label{eqPDECap}
\begin{cases}
	-\Delta V_K 
= 0,& \mbox{in } \Omega\setminus K,\\
	V_K = 0,& \mbox{on }\partial \Omega,\\
	V_K = 1,& \mbox{on } K.
      \end{cases}
\end{equation}
By saying that $V_K$ solves \eqref{eqPDECap} we mean that $V_K\in
H^1_0(\Omega)$,  $V_K-\eta_K\in  H^1_0(\Omega\setminus K)$, and 
\[
\int_{\Omega\setminus K}\nabla
V_K\cdot\nabla\varphi\,dx=0\quad\text{for all }\varphi\in
H^1_0(\Omega\setminus K).
\]
In \cite{Courtois1995Holes}, Courtois proves spectral stability under
removal of small capacity sets in a very general context;
furthermore, \cite{Courtois1995Holes} shows that, 
when $K\subset\Omega$ is a compact
set with $\mbox{\rm Cap}_{\Omega}(K)$  close to zero, then the function
\begin{equation}\label{eq:diff}
 \lambda_N(\Omega\setminus K) - \lambda_N(\Omega) 
\end{equation}
is even differentiable with respect to $\mbox{\rm Cap}_{\Omega}(K)$.
More precisely, in \cite{Courtois1995Holes} the following result is  established.
\begin{thm}{\rm \cite[Theorem 1.2]{Courtois1995Holes}}\label{t:courtois1.2}
  Let $X$ be a compact Riemannian manifold.  Let
  $\lambda:=\lambda_{N}=\ldots=\lambda_{N+k-1}$ be a Dirichlet eigenvalue of
  $X$ with multiplicity $k$.  There exist a function
  $r:\,\RR^+\to \RR^+$ such that $\lim_{t\to0} r(t)=0$ and  a positive
  constant $\eps_N$, such that, for any compact set
  $A$ of $X$, if $\mbox{\rm Cap}_X(A)\le\eps_N$, then
 \begin{equation}\label{eq:7}
 |\lambda_{N+j}(X\setminus A)-\lambda_{N+j} - \mbox{\rm Cap}_X (A)\cdot
 \mu_A(u_{N+j}^2)| \le \mbox{\rm Cap}_X (A)\cdot r(\mbox{\rm Cap}_X (A))
\end{equation}
 where $\mu_A$ is a finite positive probability measure supported in
 $A$ defined as the renormalized singular part of $\Delta V_A$ and
 $\{u_{N},\ldots,u_{N+k-1}\}$ is an orthonormal basis of the
 eigenspace of $\lambda$ which diagonalises the quadratic form
 $\mu_A(u^2)$ according to the increasing order of its eigenvalues.
\end{thm}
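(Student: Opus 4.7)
The plan is to obtain matching upper and lower bounds on $\lambda_{N+j}(X\setminus A)-\lambda$ by applying the min-max characterisation of Dirichlet eigenvalues to test families built from the $\lambda$-eigenfunctions multiplied by the cut-off $1-V_A$, which forces vanishing on $A$. The crucial analytic input is the distributional identity
\begin{equation*}
-\Delta V_A = \mathrm{Cap}_X(A)\,\mu_A\quad\text{on }X,
\end{equation*}
which is essentially the definition of $\mu_A$ as the renormalised singular part of $\Delta V_A$ coming from \eqref{eq:cap2}. This identity turns the integrals against $\nabla V_A$ that arise upon integration by parts into pairings against the measure $\mu_A$, and it is ultimately responsible for the appearance of $\mu_A(u^2)$ in the expansion.

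For the upper bound I would fix an orthonormal basis $\{u_{N+i}\}_{i=0}^{k-1}$ of the $\lambda$-eigenspace in which the quadratic form $u\mapsto\mu_A(u^2)$ is diagonal with eigenvalues listed in increasing order, consider the $k$-dimensional test space $E_A=\mathrm{span}\{(1-V_A)u_{N+i}\}\subset H^1_0(X\setminus A)$, and expand, using $-\Delta u_{N+i}=\lambda u_{N+i}$ and the distributional identity above,
\begin{equation*}
\int_X\nabla((1-V_A)u_{N+i})\cdot\nabla((1-V_A)u_{N+j}) = \lambda\,\delta_{ij}+\mathrm{Cap}_X(A)\,\mu_A(u_{N+i}u_{N+j})+R^1_{ij},
\end{equation*}
together with $\int_X(1-V_A)^2u_{N+i}u_{N+j}=\delta_{ij}+R^0_{ij}$, with remainders $R^1_{ij}=o(\mathrm{Cap}_X(A))$ and $R^0_{ij}=o(1)$. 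Coupling $E_A$ with a subspace of dimension $N-1$ spanned by the first $N-1$ eigenfunctions of $X\setminus A$ and applying the Courant-Fischer principle then yields the upper half of \eqref{eq:7}. For the lower bound I would take orthonormal eigenfunctions $v_{N+i}\in H^1_0(X\setminus A)$ of $\lambda_{N+i}(X\setminus A)$, extend them by zero to $X$, and invoke the qualitative spectral stability from \cite{RT,Courtois1995Holes} to assert that they converge in $H^1_0(X)$ to an orthonormal family in the $\lambda$-eigenspace. By a small rotation in this eigenspace one may assume the limit coincides with the diagonalising basis $\{u_{N+i}\}$. Plugging $v_{N+i}$ into the Dirichlet form of $X$ and running the computation in the reverse direction yields the matching lower bound.

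The main obstacle is to verify that all remainder terms are uniformly $o(\mathrm{Cap}_X(A))$ over the class of compact sets $A$ with vanishing capacity, which is what allows them to be packaged into a single function $r$. The principal error integrals are of the type $\int_X V_A^2 u_{N+i}^2$, $\int_X V_A u_{N+i}u_{N+j}$ and $\int_X V_A^2|\nabla u_{N+i}|^2$; each is handled by combining the convergence $V_A\to 0$ in $H^1_0(X)$ (implied by $\mathrm{Cap}_X(A)\to 0$), a uniform $L^\infty$ bound on the eigenfunctions $u_{N+i}$ arising from elliptic regularity on the smooth compact manifold $X$, and a capacitary estimate giving $\|V_A\|_{L^2}^2=o(\mathrm{Cap}_X(A))$. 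Finally, the small rotation needed to align $\{v_{N+i}\}$ with the diagonalising basis costs an $o(1)$ factor multiplying $\mathrm{Cap}_X(A)$, which still fits inside $r(\mathrm{Cap}_X(A))$.
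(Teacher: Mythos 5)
The paper does not actually prove Theorem~\ref{t:courtois1.2}: it is quoted verbatim from \cite[Theorem~1.2]{Courtois1995Holes}. The only proof supplied in the paper is for the closely related but weaker Theorem~\ref{propL1intro} (simple eigenvalue, $u_N$-capacity instead of condenser capacity), and that proof runs by a genuinely different mechanism: it forms the approximate eigenfunction $\psi_\eps=u_N-V_{K_\eps,u_N}$, observes $(-\Delta_\eps-\lambda_N)\psi_\eps=\lambda_N V_{K_\eps,u_N}$, and uses the spectral theorem to bound $\operatorname{dist}(\lambda_N,\sigma(-\Delta_\eps))$ and then the $L^2$ distance to the true eigenfunction. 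Your min--max sketch is instead in the spirit of Courtois's original argument, which rests on Colin~de~Verdi\`ere's abstract lemma on approximation of eigenvalues (cited in the paper as \cite{ColinDeVerdiere1986Multiplicite} and \cite[Proposition~3.1]{Courtois1995Holes}).

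That comparison points at the genuine gap in your proposal: you are proving a multiplicity-$k$ statement, and the abstract approximation lemma is precisely what makes the multiplicity case work. The upper bound via the test space $E_A=\operatorname{span}\{(1-V_A)u_{N+i}\}$ is fine, but the lower bound as you describe it does not close. Asserting that the normalised eigenfunctions $v_{N+i}$ of $X\setminus A$ converge in $H^1$ to \emph{some} orthonormal family in the $\lambda$-eigenspace, and then performing ``a small rotation'' to match the diagonalising basis, does not establish that $\lambda_{N+j}(X\setminus A)$ pairs with the $j$-th eigenvalue of the quadratic form $u\mapsto\mu_A(u^2)$ on the eigenspace; the rotation is $A$-dependent and nothing in your argument controls how its angle enters the expansion. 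The ordering claim is exactly the content of the abstract lemma, which compares the spectrum of $-\Delta_{X\setminus A}$ in a neighbourhood of $\lambda$ (using a quantitative spectral gap) with the spectrum of the compressed quadratic form on the $k$-dimensional test space; ``running the computation in reverse'' is not a substitute for it. A second, smaller, unresolved point is the uniformity over $A$: the contradiction-and-compactness argument proving $\|V_A\|_{L^2}^2=o(\mathrm{Cap}_X(A))$ (the analogue of Lemma~\ref{lemL2Norm}, i.e.\ \cite[Lemma~3.2]{Courtois1995Holes}) must be run over arbitrary sequences of compact sets with capacity tending to $0$, not over a single concentrating family; you acknowledge the issue but do not carry it out. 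Also note a technical subtlety you gloss over: the distributional identity $-\Delta V_A=\mathrm{Cap}_X(A)\,\mu_A$ is clean on a bounded domain with the Dirichlet normalisation, but on a closed manifold $\Delta V_A$ has a regular part as well (forced by $\int_X\Delta V_A=0$), and $\mu_A$ is defined as the renormalised \emph{singular} part only, which affects the bookkeeping in your integration by parts.
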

We mention that, in the particular case of $A$ concentrating to a point (see Definition
\ref{d:concentrating}) estimate 
 \eqref{eq:7} is proved by Flucher in \cite[Theorem 6]{Flucher1995}.
Theorem \ref{t:courtois1.2} above provides a sharp
asymptotic expansion  of $\lambda_{N+j}(X\setminus A) -\lambda_{N+j}$ as $\mbox{\rm Cap}_X(A)\to 0$
if $\mu_A(u_{N+j}^2)\not\to0$, but in general it reduces just to estimate 
the difference $\lambda_{N+j}(X\setminus A) - \lambda_{N+j}$
without giving its sharp vanishing order when
$\mu_A(u_{N+j}^2)\to0$.
A sharp asymptotic expansion of the eigenvalue variation in the case
of $\mu_A(u_{N+j}^2)$ vanishing requires a more precise estimate than
\eqref{eq:7}. In this regard, scanning through the proof of Theorem
\ref{t:courtois1.2} given in \cite{Courtois1995Holes} (see also
\cite[Theorem 7]{Flucher1995}), one realizes
that when the eigenvalue $\lambda_N(\Omega)$ is simple
the significant quantity is instead the $u_N$-capacity defined below, 
$u_N$ being a ($L^2$-normalized) eigenfunction related to $\lambda_N(\Omega)$. 
Indeed, this can  better describe the expansion of eigenvalues'
variation, 
as stated in Theorem \ref{propL1intro} below.

For every $u\in H^1_0(\Omega)$,  we defined the
  $u$-capacity as
\begin{equation}\label{eq:fcap}
 \mbox{\rm Cap}_{\Omega}(K,u)= \inf\left\{\int_{\Omega}|\nabla f|^2:\ f\in
    H^1_0(\Omega)\text{ and }f-u\in H^1_0(\Omega\setminus K) \right\}.
\end{equation}
We note that when $u=1$ in a  neighborhood of $K$, then we recover the
definition \eqref{eq:cap}  of the condenser capacity. 
Definition \eqref{eq:fcap} can be extended to $H^1_{\rm loc}(\Omega)$ functions,
just defining, for any $u\in H^1_{\rm loc}(\Omega)$, $\mbox{\rm
  Cap}_{\Omega}(K,u):=\mbox{\rm Cap}_{\Omega}(K,\eta_Ku)$ being
$\eta_K$ as in \eqref{eq:cap}.

The infimum in \eqref{eq:fcap}  
is achieved by a function 
$V_{K,u}$ which is the unique solution of the Dirichlet problem
\begin{equation}
\label{eqPDECapD}
\begin{cases}
-\Delta V_{K,u} = 0, & \mbox{in } \Omega\setminus K,\\
V_{K,u} = 0, & \mbox{on }\partial \Omega,\\
V_{K,u} = u, & \mbox{on } K,
\end{cases}
\end{equation}
in such a way that
\begin{equation}\label{eq:fcap2}
\mbox{\rm Cap}_{\Omega}(K,u)=\int_{\Omega}\left|\nabla V_{K,u}\right|^2\,dx. 
\end{equation}
By saying that $V_{K,u}$ solves \eqref{eqPDECapD} we mean that $V_{K,u}\in
H^1_0(\Omega)$,  $V_{K,u}-u\in  H^1_0(\Omega\setminus K)$, and 
\begin{equation}\label{eq:4}
\int_{\Omega\setminus K}\nabla
V_{K,u}\cdot\nabla\varphi\,dx=0\quad\text{for all }\varphi\in
H^1_0(\Omega\setminus K).
\end{equation}
We refer to \cite[Section 2]{Courtois1995Holes} for description of the
properties of the $u$-capacity and to \cite[Section
2]{bertrand-colbois} for the specific case of the $u_1$-capacity (which is
also called \emph{Dirichlet capacity}). For our purposes, it is
important to observe the continuity properties of  the $f$-capacity
for family of concentrating compact sets
described in the remark below.
\begin{dfn}\label{d:concentrating}
 Let $\{K_\eps\}_{\eps>0}$ be a family of compact sets contained in $\Omega$.
 We say that $K_\eps$ \emph{is concentrating to a compact set $K\subset\Omega$}
 if for every open set $U\subseteq \Omega$ such that $U\supset K$ there exists $\eps_U>0$ such that 
 $U\supset K_\eps$ for every $\eps<\eps_U$.
\end{dfn}

\begin{rem}\label{r:capcon}
  Let  $\{K_\eps\}_{\eps>0}$ be a family of
  compact sets contained in $\Omega$ concentrating to a compact set
  $K\subset\Omega$ such that one of the two following conditions hold:
  \begin{enumerate}
  \item[\rm (i)]  $\mbox{\rm Cap}_{\Omega}(K)=0$;
  \item[\rm (ii)]  $K=\bigcap_{\eps>0} K_\eps$ where $K_\eps$ is 
    decreasing as $\eps\to0$ (i.e. $K_{\eps_1}\subseteq K_{\eps_2}$ if
    $\eps_1>\eps_2$).
  \end{enumerate}
Then, for all
  $f\in H^1_0(\Omega)$, 
$V_{K_\eps,f}\to V_{K,f}$ strongly in $H^1_0(\Omega)$ and
$\lim_{\eps\to 0^+}\mbox{\rm Cap}_{\Omega}(K_\eps,f)=\mbox{\rm Cap}_{\Omega}(K,f)$; in particular, 
$V_{K_\eps}\to V_{K}$ in $H^1_0(\Omega)$ and
$\lim_{\eps\to
  0^+}\mbox{\rm Cap}_{\Omega}(K_\eps)=\mbox{\rm Cap}_{\Omega}(K)$. 

The proof in the case of assumption (ii) can be found in
\cite[Proposition 2.4]{Courtois1995Holes}; for case (i) we refer to Proposition
\ref{p:conv_cap} in the appendix.
\end{rem}

The following result is essentially contained in the intermediate steps
which are developed in \cite{Courtois1995Holes} to prove estimate
\eqref{eq:7}. It provides a sharp asymptotic expansion of
\eqref{eq:diff} in terms of the $u_N$-capacity when the
eigenvalue $\lambda_N(\Omega)$ is simple. 
 We observe that the derivation of \eqref{eq:7} for 
non simple eigenvalues requires an estimate of the remaining term in
the asymptotic expansion uniformly with respect to all eigenfunctions:
 this is 
performed in \cite{Courtois1995Holes} in terms of the condenser capacity. On the other hand,
for a simple eigenvalue, the intermediate estimates  obtained in
\cite[formulas (31) and (50)]{Courtois1995Holes} in terms of the
$u_N$-capacity are enough to obtain the following sharp asymptotic expansion. 
\begin{thm}\label{propL1intro}
  Let $\lambda_N(\Omega)$ be a simple eigenvalue of the Dirichlet
  Laplacian  in a bounded, connected, and open set
    $\Omega\subset\RR^n$ and let $u_N$ be a $L^2(\Omega)$-normalized
  eigenfunction associated to $\lambda_N(\Omega)$.  Let
  $(K_{\eps})_{\eps>0}$ be a family of compact sets contained in
  $\Omega$ concentrating to a compact set $K$ with
  $\mbox{\rm Cap}_{\Omega}(K)=0$.  Then
\[
\lambda_{N}(\Omega\setminus K_\eps)=\lambda_{
  N}(\Omega)+\mbox{\rm Cap}_{\Omega}(K_\eps,u_N)+o\left(\mbox{\rm Cap}_{\Omega}(K_\eps,u_N)\right),
\quad \text{as }\eps\to 0.
\]
\end{thm}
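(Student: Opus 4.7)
The strategy follows the scheme of \cite{Courtois1995Holes}, but keeps the $u_N$-capacity throughout (whereas Courtois switches to the coarser condenser capacity to handle multiplicities; simplicity of $\lambda_N(\Omega)$ lets us avoid that step). I derive matching upper and lower bounds on $\lambda_N(\Omega\setminus K_\eps)-\lambda_N(\Omega)$ of common leading order $\mbox{\rm Cap}_\Omega(K_\eps,u_N)$. Set $V_\eps:=V_{K_\eps,u_N}$. Since $\mbox{\rm Cap}_\Omega(K)=0$, every function of $H^1_0(\Omega)$ already lies in $H^1_0(\Omega\setminus K)$, hence $V_{K,u_N}=0$; Remark~\ref{r:capcon} then yields $V_\eps\to 0$ strongly in $H^1_0(\Omega)$ and $\mbox{\rm Cap}_\Omega(K_\eps,u_N)\to 0$. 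By spectral stability (Theorem~\ref{t:courtois1.2}) and the simplicity of $\lambda_N(\Omega)$, the normalized eigenfunction $u_N^\eps$ associated with $\lambda_N(\Omega\setminus K_\eps)$, extended by zero, converges to $\pm u_N$ in $H^1_0(\Omega)$. Testing the equation for $u_N$ against $V_\eps-u_N\in H^1_0(\Omega\setminus K_\eps)\subset H^1_0(\Omega)$ and using the harmonicity of $V_\eps$ on $\Omega\setminus K_\eps$ yields the identity
\[
\mbox{\rm Cap}_\Omega(K_\eps,u_N)=\int_\Omega|\nabla V_\eps|^2\,dx=\lambda_N(\Omega)\int_\Omega u_N V_\eps\,dx,
\]
which is used systematically below.

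\textbf{Upper bound.} The natural test function $\tilde u_\eps:=u_N-V_\eps\in H^1_0(\Omega\setminus K_\eps)$ has, by the basic identity,
\[
R(\tilde u_\eps)=\frac{\lambda_N(\Omega)-\mbox{\rm Cap}_\Omega(K_\eps,u_N)}{1-2\mbox{\rm Cap}_\Omega(K_\eps,u_N)/\lambda_N(\Omega)+\|V_\eps\|_{L^2(\Omega)}^2}=\lambda_N(\Omega)+\mbox{\rm Cap}_\Omega(K_\eps,u_N)+o(\mbox{\rm Cap}_\Omega(K_\eps,u_N)),
\]
provided $\|V_\eps\|_{L^2(\Omega)}^2=o(\mbox{\rm Cap}_\Omega(K_\eps,u_N))$. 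To promote this to an upper bound for $\lambda_N(\Omega\setminus K_\eps)$ via the min-max principle, I span $\tilde u_\eps$ together with the first $N-1$ eigenfunctions $u_1^\eps,\dots,u_{N-1}^\eps$ of $\Omega\setminus K_\eps$. A key sub-step is that testing the eigenvalue equations for $u_k^\eps$ and $u_N$ against each other, in the spirit of the basic identity, gives $\langle u_k^\eps,u_N\rangle_{L^2}=\frac{\lambda_k^\eps}{\lambda_k^\eps-\lambda_N}\int_\Omega u_k^\eps V_\eps\,dx=O(\|V_\eps\|_{L^2})$, so the off-diagonal entries of both the Gram and the stiffness matrices on the trial subspace are of that size and contribute $O(\|V_\eps\|_{L^2}^2)=o(\mbox{\rm Cap}_\Omega(K_\eps,u_N))$ to the generalised eigenvalues; the maximal one therefore coincides with $R(\tilde u_\eps)$ up to $o(\mbox{\rm Cap}_\Omega(K_\eps,u_N))$.

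\textbf{Lower bound.} Testing the eigenvalue equation for $u_N^\eps$ against $u_N-V_\eps\in H^1_0(\Omega\setminus K_\eps)$, the equation for $u_N$ against $u_N^\eps\in H^1_0(\Omega)$, and using harmonicity of $V_\eps$ to kill the term $\int_\Omega\nabla u_N^\eps\cdot\nabla V_\eps$, subtraction produces the exact identity
\[
\bigl(\lambda_N(\Omega\setminus K_\eps)-\lambda_N(\Omega)\bigr)\int_\Omega u_N u_N^\eps\,dx=\lambda_N(\Omega\setminus K_\eps)\int_\Omega u_N^\eps V_\eps\,dx.
\]
Since the basic identity gives $\int_\Omega u_N V_\eps=\mbox{\rm Cap}_\Omega(K_\eps,u_N)/\lambda_N(\Omega)$, what must be controlled is $\int_\Omega(u_N^\eps-u_N)V_\eps=o(\mbox{\rm Cap}_\Omega(K_\eps,u_N))$. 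Here I would bootstrap: the already-proved upper bound forces $\lambda_N(\Omega\setminus K_\eps)-\lambda_N(\Omega)=O(\mbox{\rm Cap}_\Omega(K_\eps,u_N))$, and coupling this with the spectral gap $\lambda_{N+1}(\Omega\setminus K_\eps)-\lambda_N(\Omega\setminus K_\eps)\ge c>0$ in a Davis--Kahan type argument upgrades the convergence to $\|u_N^\eps-u_N\|_{L^2(\Omega)}=O(\sqrt{\mbox{\rm Cap}_\Omega(K_\eps,u_N)})$; Cauchy--Schwarz together with $\|V_\eps\|_{L^2}=o(\sqrt{\mbox{\rm Cap}_\Omega(K_\eps,u_N)})$ then closes the estimate.

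\textbf{Main obstacle.} The whole argument rests on the sharp $L^2$-decay
\[
\|V_{K_\eps,u_N}\|_{L^2(\Omega)}^2=o\bigl(\mbox{\rm Cap}_\Omega(K_\eps,u_N)\bigr)\qquad\text{as }\eps\to 0,
\]
expressing that the capacitary potential concentrates near $K_\eps$ faster than its Dirichlet energy decays. This is exactly the content of \cite[formulas (31) and (50)]{Courtois1995Holes}. The natural proof is by duality: let $w_\eps\in H^2\cap H^1_0(\Omega)$ solve $-\Delta w_\eps=V_\eps$, so that $\|w_\eps\|_{H^2}\le C\|V_\eps\|_{L^2}$; writing $\|V_\eps\|_{L^2}^2=\int_\Omega\nabla V_\eps\cdot\nabla w_\eps$ and exploiting harmonicity of $V_\eps$ to replace $w_\eps$ by $V_{K_\eps,w_\eps}$, Cauchy--Schwarz yields
\[
\|V_\eps\|_{L^2(\Omega)}^4\le\mbox{\rm Cap}_\Omega(K_\eps,u_N)\cdot\mbox{\rm Cap}_\Omega(K_\eps,w_\eps).
\]
The genuinely delicate point is the uniform estimate $\mbox{\rm Cap}_\Omega(K_\eps,w)\le\rho_\eps\|w\|_{H^2}^2$ with $\rho_\eps\to 0$, valid on bounded subsets of $H^2\cap H^1_0(\Omega)$. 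This is proved by using $wV_{K_\eps}$ as a competitor in the definition of $\mbox{\rm Cap}_\Omega(K_\eps,w)$ and combining $\mbox{\rm Cap}_\Omega(K_\eps)\to 0$ with appropriate Sobolev embeddings to absorb the integral $\int_\Omega V_{K_\eps}^2|\nabla w|^2\,dx$.
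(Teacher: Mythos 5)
Your identity-chasing is correct and the overall architecture is sound: the relation $\mbox{\rm Cap}_{\Omega}(K_\eps,u_N)=\lambda_N(\Omega)\int_\Omega u_N V_\eps\,dx$ is exactly the paper's \eqref{eq:CapScalar}, the exact identity $(\lambda_N(\Omega\setminus K_\eps)-\lambda_N(\Omega))\int_\Omega u_N u_N^\eps=\lambda_N(\Omega\setminus K_\eps)\int_\Omega u_N^\eps V_\eps$ is the analogue of the paper's \eqref{eq:Scalar}, and your two-sided min-max plus Davis--Kahan scheme is a legitimate (slightly more Courtois-flavoured) alternative to the paper's one-shot use of the spectral theorem, which obtains $|\lambda_\eps-\lambda_N(\Omega)|=o(c_\eps^{1/2})$ and $\|u_N^\eps-u_N\|_{L^2}=o(c_\eps^{1/2})$ directly from the residual equation $(-\Delta_\eps-\lambda_N(\Omega))(u_N-V_\eps)=\lambda_N(\Omega)V_\eps$ without ever invoking a variational upper bound. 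You also correctly isolate the crux: everything reduces to $\|V_{K_\eps,u_N}\|_{L^2(\Omega)}^2=o(\mbox{\rm Cap}_{\Omega}(K_\eps,u_N))$.

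The genuine gap is in your proof of that crux. Your duality argument uses, first, the global elliptic estimate $\|w_\eps\|_{H^2(\Omega)}\le C\|V_\eps\|_{L^2(\Omega)}$, which is false for a general bounded, connected, open $\Omega\subset\RR^n$ (it requires convexity or $C^{1,1}$ boundary; think of a re-entrant corner). This particular defect is repairable by a cutoff: since $K_\eps$ eventually lies in a fixed $U\Subset\Omega$, the part of $w_\eps$ supported away from $U$ pairs to zero with $\nabla V_\eps$ by harmonicity, and interior regularity controls $\eta w_\eps$ in $H^2$. The second defect is not so easily repaired: your uniform estimate $\mbox{\rm Cap}_\Omega(K_\eps,w)\le\rho_\eps\|w\|_{H^2}^2$ via the competitor $wV_{K_\eps}$ needs to bound $\int_\Omega w^2|\nabla V_{K_\eps}|^2\,dx$ by $\|w\|_{L^\infty}^2\,\mbox{\rm Cap}_\Omega(K_\eps)$, and the embedding $H^2\hookrightarrow L^\infty$ fails for $n\ge4$ (the Sobolev embeddings you need for the other term also presuppose boundary regularity that is not assumed); since $|\nabla V_{K_\eps}|^2$ is only known to be in $L^1$, no H\"older trick substitutes for the sup bound. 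So as written your argument proves the key estimate only for $n\le 3$ and regular $\Omega$, while the theorem is stated for arbitrary bounded open sets in arbitrary dimension. The paper's Lemma \ref{lemL2Norm} avoids all of this with a soft compactness argument: if $\|V_{K_{\eps_n},f}\|_{L^2}^2\ge C^{-1}\mbox{\rm Cap}_\Omega(K_{\eps_n},f)$ along a sequence, the $L^2$-normalized potentials are bounded in $H^1_0(\Omega)$, any weak limit is harmonic in $\Omega\setminus K$, hence in all of $\Omega$ because $\mbox{\rm Cap}_\Omega(K)=0$, hence zero, contradicting the unit $L^2$ norm. You should replace your duality argument by such a compactness argument (or restrict the hypotheses).
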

As already mentioned, the proof of Theorem \ref{propL1intro} is contained in the proof of
\cite[Theorem 1.2]{Courtois1995Holes},
 which is based on a  method of approximation of small eigenvalues
introduced in \cite{ColinDeVerdiere1986Multiplicite} (see also
\cite[Proposition 3.1]{Courtois1995Holes}). 
 Nevertheless, for the sake of clarity and
completeness,  we present an alternative  proof in the appendix, which
relies on the use of the spectral theorem to estimate the eigenvalue variation.

As observed in \cite[Proposition 2.8]{Courtois1995Holes}, for every
eigenfunction $u$ of the Dirichlet Laplacian in $\Omega$, we have that
$\mbox{\rm Cap}_{\Omega}(K_{\eps},u) = O({\rm Cap}_{\Omega}(K_{\eps}))$  as
$\eps\to 0$. 
This in particular means that Theorem \ref{propL1intro} is sharper than Theorem \ref{t:courtois1.2}
since even the remaining term is estimated in terms of 
the $u_N$-capacity.

We mention that estimates from above and below (but not
  sharp asymptotic expansions) of the
  eigenvalue variation in terms of the $u_1$-capacity were obtained in
\cite{bertrand-colbois}, in the case of a compact Riemannian manifold
with boundary with a small subset removed.

Motivated by Theorems \ref{t:courtois1.2} and \ref{propL1intro}, we
devote the first part of the present paper to the derivation of
the  asymptotics
of the key quantity $\mbox{\rm Cap}_{\Omega}(K_\eps,u_N)$
(which tends continuously to $0$ as $K_{\eps}$
  concentrates to a compact zero-capacity set, as observed in remark
  \ref{r:capcon})
with the goal of writing  the sharp asymptotic expansion of
\eqref{eq:diff} in some relevant examples.  In particular we address the case of compact sets
concentrating to a point, which has indeed zero
capacity in any dimension greater  than or equal to 2.
We will show that the asymptotics of $\mbox{\rm Cap}_{\Omega}(K_\eps,u_N)$ 
depends on the limit point, more precisely on the order of vanishing of
$u_N$ at that point.

As a first remark in this direction, if the eigenfunction $u_N$ does
not vanish at the limit point, then the $u_N$-capacity is in fact
asymptotic to the condenser capacity (up to a constant).

\begin{prop}\label{lemCapDintro} 
  Let $\Omega\subset\RR^n$ be a bounded connected open set 
  with $n\geq 2$, let $u\in H^1_0(\Omega)\cap C^2(\Omega)$ and 
  $(K_{\eps})_{\eps>0}$ be a family of compact sets contained in
  $\Omega$ concentrating to a point $x_0\in \Omega$ such that
  $u(x_0)\neq 0$.  Then
  \begin{equation}\label{eq:NCap}
    \mbox{\rm Cap}_{\Omega}(K_{\eps},u)=u^2(x_0){\rm Cap}_{\Omega}(K_{\eps})+o({\rm Cap}_{\Omega}(K_{\eps})),
    \quad \text{ as }\eps\to 0.
  \end{equation}
\end{prop}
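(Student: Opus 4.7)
The plan is to exploit the continuity of $u$ at $x_0$ by splitting $u$ into its value $u(x_0)$ (which should produce the condenser capacity up to a factor $u^2(x_0)$) plus a remainder vanishing at $x_0$ (which should be lower order). Fix a cutoff $\eta\in C_c^\infty(\Omega)$ equal to $1$ in a neighborhood of $x_0$; by the concentration hypothesis, for $\eps$ small $\eta\equiv 1$ on $K_{\eps}$. Setting $c:=u(x_0)$ and $w:=u-c\eta\in H^1_0(\Omega)\cap C^2(\Omega)$, one has $w(x_0)=0$ and, by linearity and uniqueness of the problem \eqref{eqPDECapD},
\[
V_{K_{\eps},u}=c\,V_{K_{\eps}}+V_{K_{\eps},w}.
\]
Expanding $\|\nabla V_{K_{\eps},u}\|_{L^2}^2$ and bounding the cross term $\int_{\Omega}\nabla V_{K_{\eps}}\cdot\nabla V_{K_{\eps},w}$ by Cauchy--Schwarz, the proposition reduces to proving $\mathrm{Cap}_{\Omega}(K_{\eps},w)=o(\mathrm{Cap}_{\Omega}(K_{\eps}))$ as $\eps\to 0$.

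To bound $\mathrm{Cap}_{\Omega}(K_{\eps},w)$ from above, the natural admissible competitor is $f:=wV_{K_{\eps}}\in H^1_0(\Omega)$, which coincides with $w$ on $K_{\eps}$ since $V_{K_{\eps}}\equiv 1$ there. Testing the harmonic equation for $V_{K_{\eps}}$ in $\Omega\setminus K_{\eps}$ against $w^2(V_{K_{\eps}}-1)\in H^1_0(\Omega\setminus K_{\eps})$, and using the distributional identity $-\Delta V_{K_{\eps}}=\mu_{K_{\eps}}$ where $\mu_{K_{\eps}}$ is the capacitary measure with total mass $\mathrm{Cap}_{\Omega}(K_{\eps})$, one obtains the clean identity
\[
\int_{\Omega}|\nabla(wV_{K_{\eps}})|^2\,dx=\int_{\Omega}V_{K_{\eps}}^2\,|\nabla w|^2\,dx+\int_{K_{\eps}}w^2\,d\mu_{K_{\eps}}.
\]
The mass integral is immediately at most $\|w\|_{L^\infty(K_{\eps})}^2\,\mathrm{Cap}_{\Omega}(K_{\eps})$, which is $o(\mathrm{Cap}_{\Omega}(K_{\eps}))$ since $w$ is continuous with $w(x_0)=0$ and $K_{\eps}$ concentrates at $x_0$.

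The essential work, and the main obstacle, is then to show $\int_{\Omega}V_{K_{\eps}}^2\,|\nabla w|^2=o(\mathrm{Cap}_{\Omega}(K_{\eps}))$: the Poincar\'e inequality alone only gives $\int V_{K_{\eps}}^2\le C\,\mathrm{Cap}_{\Omega}(K_{\eps})$, hence an insufficient $O(\mathrm{Cap}_{\Omega}(K_{\eps}))$. To improve this I split the integral at a small ball $B_{\rho}(x_0)$. Inside $B_{\rho}$, Sobolev embedding (against $L^{2^*}$ for $n\ge 3$, or any $L^p$ with $p<\infty$ for $n=2$) followed by H\"older gives $\int_{B_{\rho}}V_{K_{\eps}}^2\le C_S\,\rho^{2}\,\mathrm{Cap}_{\Omega}(K_{\eps})$, contributing $O(\rho^2\,\mathrm{Cap}_{\Omega}(K_{\eps}))$. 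Outside $B_{\rho}$ the key input is the potential-theoretic bound
\[
\|V_{K_{\eps}}\|_{L^\infty(\Omega\setminus B_{\rho}(x_0))}\le C_{\rho}\,\mathrm{Cap}_{\Omega}(K_{\eps}),
\]
which stems from the Green's function representation $V_{K_{\eps}}(x)=\int_{K_{\eps}}G_{\Omega}(x,y)\,d\mu_{K_{\eps}}(y)$ and the boundedness of $G_{\Omega}$ on $(\Omega\setminus B_{\rho})\times K_{\eps}$ uniformly in $\eps$; this contributes $O(\mathrm{Cap}_{\Omega}(K_{\eps})^2)=o(\mathrm{Cap}_{\Omega}(K_{\eps}))$. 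A standard two-scale argument (choose $\rho$ small, then $\eps$ smaller) closes the estimate. The crucial point is this linear-in-capacity decay of $V_{K_{\eps}}$ away from $x_0$: without such potential-theoretic input one cannot upgrade the $H^1_0$ Poincar\'e estimate to the required $o$-bound.
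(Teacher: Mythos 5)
Your argument is correct in substance, but it takes a genuinely different route from the paper. The paper first establishes the integration-by-parts identity of Lemma \ref{l:53Cour} (proved for regular compact sets and extended by the approximation of Remark \ref{r:capcon}), which isolates a main term $\mathcal L(u,K_\eps)$ squeezed between $\min_{K_\eps}u^2$ and $\max_{K_\eps}u^2$ times $\mathrm{Cap}_\Omega(K_\eps)$, and then kills the two remainder integrals by Cauchy--Schwarz together with Corollary \ref{corNormL2}, i.e.\ the soft compactness fact $\|V_{K_\eps,f}\|_{L^2(\Omega)}^2=o(\mathrm{Cap}_\Omega(K_\eps))$ of Lemma \ref{lemL2Norm}. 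You instead split at the level of the potentials, $V_{K_\eps,u}=u(x_0)V_{K_\eps}+V_{K_\eps,w}$ with $w(x_0)=0$, reduce to $\mathrm{Cap}_\Omega(K_\eps,w)=o(\mathrm{Cap}_\Omega(K_\eps))$, and prove this by a hands-on potential-theoretic estimate (capacitary measure, energy identity for the competitor $wV_{K_\eps}$, Green's function bound away from $x_0$, two-scale splitting). Your route is more quantitative and self-contained modulo classical potential theory; the paper's route avoids the Green's function and the capacitary measure altogether (it only needs the variational definition plus Hopf's lemma on regular approximants) and recycles Lemma \ref{lemL2Norm}, which is needed anyway for Theorem \ref{propL1intro}. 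Two remarks. First, your closing claim that the linear-in-capacity decay of $V_{K_\eps}$ is indispensable is not accurate: once one localizes so that $|\nabla w|$ is bounded on the relevant region, the troublesome term is $\le\|\nabla w\|_{L^\infty}^2\int_\Omega V_{K_\eps}^2\,dx$, and Corollary \ref{corNormL2} already gives $\int_\Omega V_{K_\eps}^2\,dx=o(\mathrm{Cap}_\Omega(K_\eps))$ by a compactness argument, with no Green's function input. Second, a small technical point: since $u\in C^2(\Omega)$ need not be bounded up to $\partial\Omega$, neither is $w=u-u(x_0)\eta$, so the competitor $wV_{K_\eps}$ and the test function $w^2(V_{K_\eps}-1)$ are not obviously admissible; you should replace $w$ by $\eta w$ throughout (which changes neither $\mathrm{Cap}_\Omega(K_\eps,w)$ nor the local estimates, since the capacity only depends on the values near $K_\eps$). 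Also, for $n=2$ your interior Sobolev--H\"older step yields $\rho^{2-4/p}$ rather than $\rho^{2}$, which is still a positive power of $\rho$ and suffices for the two-scale argument.
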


In view of Proposition \ref{lemCapDintro}, if the eigenfunction $u$
does not vanish at $x_0$, then the
$u$-capacity is asymptotic to the condenser capacity and the problem
of sharp asymptotics of the eigenvalue variation \eqref{eq:diff} for
$K$ concentrating at $x_0$ is reduced to the study of the behaviour of ${\rm Cap}_{\Omega}(K)$.
In dimension $2$ we succeed in proving the following sharp asymptotic expansion
of the condenser capacity of generic compact connected sets  concentrating to
a point in terms of their diameter.

\begin{prop}\label{lemCapSmall}
  Let $\Omega$ be a bounded connected open set $\Omega\subset\RR^2$.
  Let $(K_{\eps})_{\eps>0}$ be a family of compact connected sets
  contained in $\Omega$ concentrating to a point $x_0\in \Omega$.  Let
  $\delta_\eps=\mathop{\rm diam}K_\eps$, so that $\delta_\eps\to0^+$
  as $\eps\to0^+$.  Then
\[
\mbox{\rm Cap}_{\Omega}(K_{\eps})=\frac{2\pi}{|\log(\delta_\eps)|}+O\left(\frac{1}{\log^2(\delta_\eps)}\right),\quad\text{as
}\eps\to0^+.
\]
\end{prop}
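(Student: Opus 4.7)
The plan is to sandwich $\mbox{\rm Cap}_\Omega(K_\eps)$ between condenser capacities of concentric disks, for which the explicit formula $\mbox{\rm Cap}_{B_\rho(a)}(\overline{B_s(a)})=2\pi/\log(\rho/s)$ is available, and then to Taylor-expand both bounds to read off their common leading order.

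\textbf{Upper bound.} Fix $(a_\eps)$ with $a_\eps\in K_\eps$; since $K_\eps$ concentrates to $x_0\in\Omega$, we have $a_\eps\to x_0$, so there exists $R>0$ with $\overline{B_R(a_\eps)}\subset\Omega$ for every sufficiently small $\eps$. Because $\mbox{\rm diam}\,K_\eps=\delta_\eps$, the inclusion $K_\eps\subset\overline{B_{\delta_\eps}(a_\eps)}$ holds, and the double monotonicity of the condenser capacity (non-decreasing in the removed compact, non-increasing in the ambient domain) gives
\[
\mbox{\rm Cap}_\Omega(K_\eps)\le\mbox{\rm Cap}_{B_R(a_\eps)}\bigl(\overline{B_{\delta_\eps}(a_\eps)}\bigr)=\frac{2\pi}{\log(R/\delta_\eps)}=\frac{2\pi}{|\log\delta_\eps|}+O\!\left(\frac{1}{\log^2\delta_\eps}\right),
\]
as $\eps\to 0^+$.

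\textbf{Lower bound.} Pick $R'>0$ such that $\Omega\subset B_{R'}(a_\eps)$ for every small $\eps$; the same monotonicity yields $\mbox{\rm Cap}_\Omega(K_\eps)\ge\mbox{\rm Cap}_{B_{R'}(a_\eps)}(K_\eps)$, so the task reduces to a geometric lemma of the form
\[
\mbox{\rm Cap}_{B_{R'}(a)}(K)\ge\frac{2\pi}{|\log d|+C}
\]
for every compact \emph{connected} $K\subset B_{R'}(a)$ of small diameter $d$, with $C$ depending only on $R'$. I would derive this by combining two classical facts from planar potential theory: (a) P\'olya's inequality for the logarithmic (transfinite) capacity, namely $c(K)\ge d/4$ for every connected compact $K\subset\RR^2$ of diameter $d$, proved by noting that orthogonal projection onto a line does not increase $c(\cdot)$, reducing to the case of a segment; and (b) the standard asymptotic identification between condenser and logarithmic capacities in a disk,
\[
\mbox{\rm Cap}_{B_{R'}(a)}(K)=\frac{2\pi}{\log(R'/c(K))}+O\!\left(\frac{1}{\log^2(R'/c(K))}\right)\quad\text{as }d/R'\to 0,
\]
obtained by analysing the Green's function of $B_{R'}(a)$ against the equilibrium measure of $K$. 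Combining these and expanding, one recovers $2\pi/|\log\delta_\eps|+O(1/\log^2\delta_\eps)$, which matches the upper bound.

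\textbf{Main obstacle.} The delicate direction is the lower bound. The upper bound holds for arbitrary (not necessarily connected) $K_\eps$ with the stated diameter, whereas connectedness is indispensable on the lower side: a finite set of points has zero capacity, so no estimate of the form $2\pi/|\log\delta_\eps|$ is possible without some connectedness assumption. Quantitatively extracting the correct leading coefficient $2\pi$ from the connectedness of $K_\eps$ via P\'olya's inequality is the crux of the argument; an alternative, more self-contained route would exploit connectedness directly by observing that $K_\eps$ meets every circle of radius $r\in(0,\delta_\eps)$ centred at one of its diameter-realising endpoints, and combining this constraint with a radial Cauchy--Schwarz integration in polar coordinates around that endpoint to bound the Dirichlet energy of any admissible test function from below.
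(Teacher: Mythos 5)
Your upper bound is exactly the paper's: enclose $K_\eps$ in $\overline B(\cdot,\delta_\eps)$, enlarge the ambient domain to a fixed ball, and use the explicit radial annulus potential. For the lower bound you take a genuinely different route. The paper stays entirely inside condenser capacity: it encloses $\Omega$ in an ellipse $\widetilde{\mathcal E}_\eps$ whose major axis lies on the line through two diameter-realising points $a_\eps,b_\eps\in K_\eps$, proves $\mbox{\rm Cap}_{\widetilde{\mathcal E}_\eps}(\widetilde s_\eps)\le \mbox{\rm Cap}_{\widetilde{\mathcal E}_\eps}(K_\eps)$ by Steiner symmetrization of the capacitary potential with respect to that line (connectedness enters exactly where you say it must: $K_\eps$ meets every line perpendicular to the diameter segment, so the symmetrized potential equals $1$ on $\widetilde s_\eps$), and then computes the condenser capacity of the segment in the ellipse explicitly in elliptic coordinates via Fourier series (Lemma \ref{l:ellpse}, formula \eqref{eq:11}). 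You instead pass through logarithmic (transfinite) capacity, combining the classical bound $c(K)\ge \mathop{\rm diam}(K)/4$ for continua with the asymptotic identification of condenser and logarithmic capacity in a disk. Both arguments exploit connectedness in the same essential way (reduction to the diameter segment), but the paper's is self-contained and its ellipse computation is reused later for the $u$-capacity of segments, whereas yours outsources the analytic core to two citable classical theorems. Your plan is sound provided you actually justify those two inputs: the relation $\mbox{\rm Cap}_{B_{R'}}(K)=2\pi/\log(R'/c(K))+O(\log^{-2})$, uniformly over small continua, is true but not a one-liner (it is essentially in Flucher's paper already cited here), and $c(K)\ge d/4$ is standard (Ransford, Thm.\ 5.3.2) --- though your one-line derivation of it rests on the projection theorem for logarithmic capacity, which itself deserves a reference or proof. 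Finally, the ``more self-contained'' aside at the end is the weak point: the radial Cauchy--Schwarz argument around a diameter endpoint only forces the test function to equal $1$ at the points where each circle meets $K_\eps$, not on the whole circle, so by itself it does not recover the sharp constant $2\pi$; repairing it leads you back to a symmetrization step, which is precisely what the paper's Steiner argument supplies.
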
 
The proof of
Proposition \ref{lemCapSmall} is based on Steiner
symmetrization methods together with elliptic coordinates, see
section \ref{sec:capac-dimens-2}.

As a consequence of Theorem \ref{propL1intro}, Propositions
\ref{lemCapDintro} and \ref{lemCapSmall}, we deduce the following
sharp asymptotic expansion of the eigenvalue variation 
 \eqref{eq:diff} 
as the removed connected compact set $K$ concentrates to a point in
dimension $n=2$.
\begin{thm}\label{t:one}
  Let $\lambda_N(\Omega)$ be a simple eigenvalue of the Dirichlet
  Laplacian in  a bounded, connected, open set
    $\Omega\subset\RR^2$
 with the $L^2(\Omega)$-normalized associated
  eigenfunction $u_N$.  Let $(K_{\eps})_{\eps>0}$ be a family of
  compact connected sets contained in $\Omega$ concentrating to a
  point $x_0\in\Omega$ such that $u_N(x_0)\ne0$. Then
\[
\lambda_{N}(\Omega\setminus K_\eps)-\lambda_{ N}(\Omega)= u_N^2(x_0)
\dfrac{2\pi}{|\log\delta_\eps|} +
o\bigg(\dfrac{1}{|\log\delta_\eps|}\bigg), \quad \text{as }\eps\to 0.
\]
\end{thm}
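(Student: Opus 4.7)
The plan is to assemble the three asymptotic results that have just been stated in the introduction into a chain of substitutions, with the only subtlety being the bookkeeping of error terms.

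First, I would verify that the hypotheses of the three building blocks are met. Since $x_0\in\Omega$ and $\Omega$ is open, every $K_\eps$ eventually lies inside any neighborhood of $x_0$ contained in $\Omega$, so in particular $K_\eps$ concentrates to the singleton $K=\{x_0\}$. In dimension $n=2$ a single point has zero condenser capacity, hence $\mbox{\rm Cap}_\Omega(\{x_0\})=0$, which is exactly condition (i) in Remark \ref{r:capcon} and the hypothesis needed for Theorem \ref{propL1intro}. Elliptic regularity inside $\Omega$ gives $u_N\in C^2(\Omega)$, so the smoothness assumption in Proposition \ref{lemCapDintro} is satisfied. Moreover $u_N(x_0)\neq 0$ by hypothesis, so Proposition \ref{lemCapDintro} applies with $u=u_N$.

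Next I would apply Proposition \ref{lemCapSmall} to expand the condenser capacity:
\[
\mbox{\rm Cap}_\Omega(K_\eps)=\frac{2\pi}{|\log\delta_\eps|}+O\!\left(\frac{1}{\log^2\delta_\eps}\right),
\]
which in particular shows $\mbox{\rm Cap}_\Omega(K_\eps)\sim 2\pi/|\log\delta_\eps|$. Feeding this into Proposition \ref{lemCapDintro} and using $u_N(x_0)\neq 0$ gives
\[
\mbox{\rm Cap}_\Omega(K_\eps,u_N)=u_N^2(x_0)\,\mbox{\rm Cap}_\Omega(K_\eps)+o\!\left(\mbox{\rm Cap}_\Omega(K_\eps)\right)
=u_N^2(x_0)\,\frac{2\pi}{|\log\delta_\eps|}+o\!\left(\frac{1}{|\log\delta_\eps|}\right).
\]
In particular, $\mbox{\rm Cap}_\Omega(K_\eps,u_N)$ and $1/|\log\delta_\eps|$ are of the same order, since $u_N^2(x_0)>0$.

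Finally I would plug this into Theorem \ref{propL1intro}, which yields
\[
\lambda_N(\Omega\setminus K_\eps)-\lambda_N(\Omega)=\mbox{\rm Cap}_\Omega(K_\eps,u_N)+o\!\left(\mbox{\rm Cap}_\Omega(K_\eps,u_N)\right).
\]
Because $\mbox{\rm Cap}_\Omega(K_\eps,u_N)=O(1/|\log\delta_\eps|)$, the remainder $o(\mbox{\rm Cap}_\Omega(K_\eps,u_N))$ is absorbed into $o(1/|\log\delta_\eps|)$, and combining with the expansion of $\mbox{\rm Cap}_\Omega(K_\eps,u_N)$ just obtained gives the claimed formula. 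There is no real obstacle here: all the analytic work has been done in Theorem \ref{propL1intro} (spectral stability), Proposition \ref{lemCapDintro} (reduction of $u$-capacity to condenser capacity via $u(x_0)\neq 0$), and Proposition \ref{lemCapSmall} (Steiner symmetrization in two dimensions). The only mild care required is to confirm that the nonvanishing of $u_N(x_0)$ is what makes both the leading constant $u_N^2(x_0)$ appear and the $o$-term from Theorem \ref{propL1intro} collapse into an $o(1/|\log\delta_\eps|)$.
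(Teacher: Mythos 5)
Your proof is correct and follows exactly the route the paper intends: Theorem \ref{t:one} is stated there as a direct consequence of Theorem \ref{propL1intro}, Proposition \ref{lemCapDintro}, and Proposition \ref{lemCapSmall}, and your chaining of the three expansions, including the verification that $u_N(x_0)\neq 0$ makes $\mbox{\rm Cap}_{\Omega}(K_\eps,u_N)$ comparable to $1/|\log\delta_\eps|$ so that all remainders collapse into $o(1/|\log\delta_\eps|)$, is precisely the intended argument.
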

It is worthwhile mentioning that there is a rich literature dealing
with the asymptotic expansion of the eigenvalues when small sets are
removed from the domain, in particular when the removed set is a
tubular neighborhood of a submanifold. Theorem
\ref{t:one} above has  the following counterpart in \cite[Theorem
1.4]{Courtois1995Holes}, which provides the asymptotic expansion for
$\lambda_{N}(\Omega\setminus K_\eps)-\lambda_{ N}(\Omega)$ when
$K_\eps$ is a tubular neighborhood of a closed submanifold $Y$ of
codimension $p\ge2$.
\begin{thm}{\rm \cite[Theorem 1.4]{Courtois1995Holes}}\label{t:courtois1.4}
  Let $\lambda:=\lambda_{N}=\ldots=\lambda_{N+k-1}$ be an eigenvalue
  of $X$ with multiplicity $k$.  Let
  $\{u_{N},\ldots,u_{N+k-1}\}$ be an orthonormal basis of the
  eigenspace of $\lambda$ which diagonalises the quadratic form
  $\int_Y u^2$ according to the increasing order of its eigenvalues.
  Then, if $K_\eps$ is a tubular neighborhood of a closed submanifold
  $Y$ of codimension $p\ge2$, we have for $j=0,1,\ldots,k-1$
\[
 \lambda_{N+j}(X\setminus K_\eps)-\lambda_{N+j}=\phi_p(\eps)\int_Y u^2_{N+j} + o(\phi_p(\eps))
\]
where $\phi_p(\eps)=\frac{2\pi}{|\log\eps|}$ if $p=2$ and
$\phi_p(\eps)=(p-2)(\mathop{\rm Vol } (Y))^{p-1}\eps^{p-2}$ if $p\ge3$.
\end{thm}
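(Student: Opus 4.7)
The plan is to treat Theorem \ref{t:courtois1.4} as a direct consequence of Theorem \ref{t:courtois1.2}, by identifying the two factors appearing on the right hand side of \eqref{eq:7}: the condenser capacity $\mbox{\rm Cap}_X(K_\eps)$ and the probability measure $\mu_{K_\eps}$ acting on $u_{N+j}^2$. Since the family $\{K_\eps\}$ of tubular neighbourhoods of $Y$ is decreasing and $\bigcap_{\eps>0} K_\eps=Y$ has zero capacity (as $\mbox{\rm codim}\, Y=p\ge2$), Remark \ref{r:capcon} gives $\mbox{\rm Cap}_X(K_\eps)\to 0$, so that \eqref{eq:7} is applicable for $\eps$ small. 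The proof then splits into two asymptotic computations, (i) the leading order of $\mbox{\rm Cap}_X(K_\eps)$, and (ii) the weak limit of $\mu_{K_\eps}$; once these are in hand, the claimed expansion follows by plugging into \eqref{eq:7} and observing that the error $\mbox{\rm Cap}_X(K_\eps)\cdot r(\mbox{\rm Cap}_X(K_\eps))$ is of order $o(\phi_p(\eps))$.

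For the capacity asymptotics I would work in Fermi (normal tubular) coordinates around $Y$, writing a point near $Y$ as $(y,v)$ with $y\in Y$ and $v\in N_y Y$, $|v|<\eps_0$. The ambient metric in these coordinates is a smooth perturbation of the product metric $g_Y\oplus g_{\RR^p}$, with error $O(|v|)$; hence the Laplace--Beltrami operator differs from the flat operator $\Delta_Y+\Delta_{\RR^p}$ by terms that vanish on $Y$. For each fixed $y$, I would take a radial ansatz $W_\eps(y,v)=w_\eps(|v|)$ on the slice, where $w_\eps$ solves the $p$-dimensional condenser problem between the sphere $\{|v|=\eps\}$ (where $w_\eps=1$) and a fixed sphere $\{|v|=R\}$ (where $w_\eps=0$). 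Standard explicit computation in $\RR^p$ gives
\[
\int_{\{\eps<|v|<R\}}|\nabla w_\eps|^2\,dv=\begin{cases}\dfrac{2\pi}{|\log\eps|}\bigl(1+o(1)\bigr),& p=2,\\[4pt] (p-2)\,\mbox{Vol}(S^{p-1})\,\eps^{p-2}\bigl(1+o(1)\bigr),& p\ge3,\end{cases}
\]
and integrating over $y\in Y$ with respect to the induced volume gives an explicit competitor for $\mbox{\rm Cap}_X(K_\eps)$. To upgrade this upper bound to a sharp asymptotic I would produce a matching lower bound by slicewise symmetrisation in the normal bundle: on each slice $\{y\}\times N_yY$, the restriction of the true capacitary potential $V_{K_\eps}$ has Dirichlet energy at least that of its $p$-dimensional radial rearrangement, which in turn is at least the model $w_\eps$. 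Integrating, and controlling tangential gradients and metric perturbations by cutoff arguments, yields $\mbox{\rm Cap}_X(K_\eps)=\phi_p(\eps)\,\mbox{Vol}(Y)\,(1+o(1))$.

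For the measure $\mu_{K_\eps}$ I would exploit its characterisation as the (renormalised) singular part of $\Delta V_{K_\eps}$: equivalently, for test functions $\varphi\in C^\infty(X)$,
\[
\mu_{K_\eps}(\varphi)=\mbox{\rm Cap}_X(K_\eps)^{-1}\int_X\nabla V_{K_\eps}\cdot\nabla\varphi\,dx.
\]
Using the model capacitary potential $W_\eps$ described above (and the fact that $V_{K_\eps}-W_\eps$ has energy $o(\mbox{\rm Cap}_X(K_\eps))$ by comparing quadratic forms), this integral reduces in the limit to an integral over $Y$ with density proportional to the induced volume, so that $\mu_{K_\eps}\rightharpoonup \mbox{Vol}(Y)^{-1}d\mbox{vol}_Y$ weakly. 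Diagonalising $\mu_{K_\eps}$ in the eigenspace of $\lambda$, the basis converges to one that diagonalises the limit quadratic form $u\mapsto \mbox{Vol}(Y)^{-1}\int_Y u^2$, which is the basis prescribed in the statement; hence $\mu_{K_\eps}(u_{N+j}^2)\to \mbox{Vol}(Y)^{-1}\int_Y u_{N+j}^2$. Substituting these asymptotics for the two factors in \eqref{eq:7} yields the claim with $\phi_p$ as defined in the theorem.

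The main obstacle is the sharp capacity computation in step (i): the slicewise symmetrisation must be combined with a careful analysis of the metric perturbation in tubular coordinates, and the logarithmic case $p=2$ is delicate because the leading term is only $1/|\log\eps|$, so one must track remainders carefully enough to absorb them into a $o(1/|\log\eps|)$ error. The measure convergence in step (ii) is comparatively routine once the capacitary potential is shown to be well approximated by the product construction $W_\eps$, but the selection of a converging orthonormal basis in the eigenspace (to handle non-simple eigenvalues) requires the same uniform quadratic-form control on which the proof of Theorem \ref{t:courtois1.2} itself is based.
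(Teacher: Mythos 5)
This theorem is not proved in the paper at all: it is quoted verbatim from \cite[Theorem 1.4]{Courtois1995Holes} purely for comparison with Theorem \ref{t:one}, so there is no in-paper proof to measure your argument against. Judged on its own terms, your strategy --- reduce to Theorem \ref{t:courtois1.2} by computing (i) the sharp asymptotics of $\mbox{\rm Cap}_X(K_\eps)$ for a tube of radius $\eps$ around $Y$ and (ii) the weak limit of the capacitary measures $\mu_{K_\eps}$, then absorb the error $\mbox{\rm Cap}_X(K_\eps)\, r(\mbox{\rm Cap}_X(K_\eps))$ into $o(\phi_p(\eps))$ --- is exactly the route taken in Courtois's original paper, and the outline is sound. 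The two analytic inputs you identify are indeed where all the work lies: the matching lower bound for the tube capacity via slicewise rearrangement in the normal bundle (where discarding tangential derivatives goes in the right direction, but one must also control the boundary values of $V_{K_\eps}$ on $\{|v|=R\}$, e.g.\ via the maximum principle and the fact that $\|V_{K_\eps}\|$ is small, and track the metric error $O(|v|)$ carefully enough in the $p=2$ case to keep it inside $o(1/|\log\eps|)$), and the identification $\mu_{K_\eps}\rightharpoonup \mbox{Vol}(Y)^{-1}d\mbox{vol}_Y$, which follows once $V_{K_\eps}$ is shown to be energy-close to the product model $W_\eps$ (here the orthogonality $q(W_\eps-V_{K_\eps})=q(W_\eps)-q(V_{K_\eps})$, coming from harmonicity of $V_{K_\eps}$ off $K_\eps$, gives the needed $o(\mbox{\rm Cap})$ bound for free from the upper and lower capacity estimates).

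Two small corrections. First, in the multiple-eigenvalue case you should not argue that the diagonalising bases converge (they need not, if the limit form $\int_Y u^2$ has repeated eigenvalues on the eigenspace); what you actually need, and what does hold, is that the ordered eigenvalues of the quadratic forms $u\mapsto\mu_{K_\eps}(u^2)$ on the fixed $k$-dimensional eigenspace converge to those of $u\mapsto\mbox{Vol}(Y)^{-1}\int_Y u^2$, since $\mu_{K_\eps}((u^\eps_{N+j})^2)$ in \eqref{eq:7} is precisely the $j$-th such eigenvalue. Second, your slice computation produces $\phi_p(\eps)=(p-2)\,\mbox{Vol}(S^{p-1})\,\eps^{p-2}$ for $p\ge 3$ after the factor $\mbox{Vol}(Y)$ from the capacity cancels against the normalisation of $\mu_{K_\eps}$; the expression $(\mathop{\rm Vol}(Y))^{p-1}$ in the statement as transcribed here should be read as $\mathop{\rm Vol}(S^{p-1})$ (a typographical slip in the quotation), so your constant is in fact the correct one.
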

Theorem \ref{t:courtois1.4} generalizes preexisting results obtained
for simple eigenvalues by Ozawa \cite{Ozawa1981Duke} when $K$ is a
point and $\Omega$ is a smooth bounded domain in $\RR^2$ and by Chavel
and Feldman for any codimension $p$ \cite{chavel-feldman}.  Concerning
the case in which $K$ is a point, it is worthwhile citing also
\cite{Besson1985}, which provides the whole asymptotic expansion for
\eqref{eq:diff}.
 We highlight that, in the case $n=2$ and for simple limit
 eigenvalues, Theorem \ref{t:one} holds for general families of
 compact sets concentrating at a point, which are not required to have
 necessarily the special form of  decreasing
 neighborhoods of the limit point. The validity of the asymptotic
 expansion for general families of removed compact sets 
 finds applications in the analysis of spectral stability for magnetic
 Aharonov--Bohm operators with two coalescing points; this case
 requires the possibility of choosing as 
 $K_\eps$ a nodal line of a magnetic  eigenfunction joining the
 poles, see section \ref{sec:spectr-equiv-lapl} and \cite{AFL-2}.

 When the limit eigenfunction $u_N$ vanishes on the limit compact set,
 both Theorems \ref{t:courtois1.4} and \ref{t:one} reduces to be just
 an estimate of the vanishing rate of the eigenvalue variation,
 without giving any sharp information on the leading term of the
 expansion. Nevertheless, in view of Theorem \ref{propL1intro}, a
 sharp asymptotics for simple eigenvalues can be obtained once the
 asymptotics of ${\rm Cap}_\Omega(K_\eps,u_N)$ is computed, as we will
 do at least for special shapes of concentrating compact sets
 (i.e. segments and disks) in dimension $2$.

  Let $u$ be an eigenfunction of the Dirichlet
 Laplacian in $\Omega$, with $\Omega$ being a bounded, connected open
 set in $\RR^2$  containing $0$.  It is well-known that
 $u \in C^{\infty}(\Omega)$ and there exist
 $k\in \NN\cup\{0\}$, $\beta\in\RR\setminus\{0\}$ and
 $\alpha\in [0,\pi)$ such that
\begin{equation}\label{eq:asyphi0}
  r^{-k} u(r(\cos t,\sin t)) \to 
  \beta \sin(\alpha-kt),
 \end{equation}
in $C^{1,\tau}([0,2\pi])$ as $r\to0^+$ for any
$\tau\in (0,1)$ (see e.g. \cite{FFT}).  In this case we say 
  that $u$ has a zero
    of order $k$ at $0$.
We note that 
 \eqref{eq:asyphi0} implies that $u$ has exactly $k$ nodal lines 
dividing the $2\pi$-angle in equal parts; the minimal slope of
tangents to nodal lines is equal to $\frac{\alpha}{k}$.
 We also observe that, if $k=0$ in \eqref{eq:asyphi0}, then $\beta\sin\alpha=u(0)$.

The following  result provides the asymptotics of the $u$-capacity in
the case of 
segments concentrating at a point.
\begin{thm}  \label{t:capphiNintro}
  Let $s_\eps=[-\eps,\eps]\times \{0\}$. For $u$ being a
  $L^2(\Omega)$-normalized  eigenfunction of the Dirichlet Laplacian
  in  an open, bounded, connected set 
$\Omega\subset\RR^2$ containing $0$, let $k\in \NN\cup\{0\}$,  $\beta\in\RR\setminus\{0\}$, and $\alpha\in [0,\pi)$ 
be as in \eqref{eq:asyphi0}.
\begin{enumerate}[\rm (i)]
\item If $\alpha\neq0$, then
\begin{equation}\label{eq:2}
  \mbox{\rm Cap}_{\Omega}(s_{\eps},u)
 =\begin{cases}
   \frac{2\pi}{|\log\eps|}\, u^2(0)\,(1+o(1)), &\text{if }k=0,\\[4pt]
    \eps^{2k}\,\pi\,\beta^2\sin^2\alpha\, C_k  (1+o(1)), &\text{if }k\ge1,
  \end{cases}
	\end{equation}
as $\eps\to 0^+$,  $C_k$ being  a positive constant depending on $k$
(see \eqref{eq:1}).
\item If $\alpha=0$, then  $\mbox{\rm Cap}_{\Omega}\left(s_\eps,u\right)=
O\left(\eps^{2k+2}\right)$ as $\eps\to0^+$.
\end{enumerate}
\end{thm}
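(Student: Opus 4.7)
The plan is to split into three regimes. The case $k=0$ is immediate: under $\alpha\ne 0$, \eqref{eq:asyphi0} gives $u(0)=\beta\sin\alpha\ne 0$, so Proposition \ref{lemCapDintro} yields $\mbox{\rm Cap}_{\Omega}(s_\eps,u)=u^2(0)\,\mbox{\rm Cap}_{\Omega}(s_\eps)+o(\mbox{\rm Cap}_{\Omega}(s_\eps))$, while Proposition \ref{lemCapSmall} applied to $s_\eps$ (with $\delta_\eps=2\eps$) gives $\mbox{\rm Cap}_{\Omega}(s_\eps)=2\pi/|\log\eps|+O(|\log\eps|^{-2})$, combining to the first line of \eqref{eq:2}. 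Part (ii) is similarly soft: when $\alpha=0$ the leading term in \eqref{eq:asyphi0} reduces to $-\beta r^k\sin(k\theta)$, which vanishes identically on the $x$-axis, so $u(x_1,0)=O(|x_1|^{k+1})$ on $s_\eps$; a test function of the form $\chi(x)\,\eps^{k+1}\Psi(x/\eps)$, with $\chi$ a smooth cutoff near $0$ and $\Psi\in H^1(\RR^2\setminus s_1)$ a bounded extension of the rescaled next-order Taylor datum of $u$, has Dirichlet integral $O(\eps^{2k+2})$, and the variational characterization of $\mbox{\rm Cap}_{\Omega}(s_\eps,u)$ concludes.

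The heart of the argument is the main case $k\ge 1$, $\alpha\ne 0$, which I would attack by a blow-up at scale $\eps$. Setting $\tilde V_\eps(y):=\eps^{-k}V_{s_\eps,u}(\eps y)$, a change of variables gives $\mbox{\rm Cap}_{\Omega}(s_\eps,u)=\eps^{2k}\int_{\eps^{-1}\Omega}|\nabla\tilde V_\eps|^2\,dy$, and $\tilde V_\eps$ is harmonic on $\eps^{-1}\Omega\setminus s_1$ (with $s_1:=[-1,1]\times\{0\}$), vanishes on $\eps^{-1}\partial\Omega$, and converges on $s_1$ to $h(y):=\beta\sin(\alpha-k\theta)|y|^k$ by \eqref{eq:asyphi0}. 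The candidate limit is the unique harmonic function $\Phi$ on $\RR^2\setminus s_1$ with boundary value $h$ on $s_1$ and finite Dirichlet integral. I would identify and compute $\Phi$ in elliptic coordinates $y_1+iy_2=\cosh(\eta+i\xi)$, which parametrize $\RR^2\setminus s_1$ by $\{\eta>0,\ \xi\in[0,2\pi)\}$. Since the Dirichlet integral is conformally invariant (equal to the flat $\int_0^{2\pi}\!\!\int_0^\infty(\Phi_\eta^2+\Phi_\xi^2)\,d\eta\,d\xi$), finiteness of energy rules out the modes $e^{n\eta}$ and $\eta$; using $\sin(\alpha-k\pi)=(-1)^k\sin\alpha$, the boundary datum reduces to $\beta\sin\alpha\cos^k\xi$, so $\Phi=A_0+\sum_{n\ge 1}c_n e^{-n\eta}\cos n\xi$ with $c_n$ read off the Fourier expansion of $\cos^k\xi$. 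Parseval then gives $\int|\nabla\Phi|^2=\pi\beta^2\sin^2\alpha\,C_k$ with $C_k$ as in \eqref{eq:1}, and also proves uniqueness of $\Phi$ (any harmonic function on $\RR^2\setminus s_1$ vanishing on $s_1$ with finite Dirichlet integral must be zero).

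For the upper bound I would plug in the test function $f_\eps(x):=\chi(x)\,\eps^k\Phi(x/\eps)+w_\eps(x)$, with $\chi\in C_c^\infty(\Omega)$ a fixed cutoff with $\chi\equiv 1$ near $0$ and $w_\eps\in H^1_0(\Omega)$ a small correction chosen so that $f_\eps=u$ on $s_\eps$; since on $s_\eps$ the difference $u(x)-\eps^k h(x/\eps)$ is $O(\eps^{k+1})$ by \eqref{eq:asyphi0}, the same scaling argument as in case (ii) lets me take $w_\eps$ with $\int|\nabla w_\eps|^2=O(\eps^{2k+2})$, and a direct expansion yields $\int|\nabla f_\eps|^2=\eps^{2k}\int|\nabla\Phi|^2+o(\eps^{2k})$. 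The matching lower bound then follows from weak compactness: this upper bound gives $\int|\nabla\tilde V_\eps|^2=O(1)$, so up to extraction $\tilde V_\eps\rightharpoonup\tilde V_0$ weakly in $H^1_{\mathrm{loc}}(\RR^2)$, with $\tilde V_0$ harmonic off $s_1$ and equal to $h$ on $s_1$; by uniqueness $\tilde V_0=\Phi$, and weak lower semicontinuity of the Dirichlet integral concludes.

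The delicate point is precisely this lower bound: $\Phi$ generically tends at infinity to a nonzero constant $A_0$ (the Fourier mean of $\beta\sin\alpha\cos^k\xi$, nonzero for $k$ even), while $\tilde V_\eps$ must vanish on the receding boundary $\eps^{-1}\partial\Omega$. One therefore has to quantify the macroscopic ``transition layer'' matching the constant value $\eps^k A_0$ near $0$ with $0$ on $\partial\Omega$ and show that its contribution to $\int|\nabla V_{s_\eps,u}|^2$ is $O(\eps^{2k}/|\log\eps|)=o(\eps^{2k})$, exploiting the 2D logarithmic estimate from Proposition \ref{lemCapSmall}. Marrying this global control with the local Fourier uniqueness of $\Phi$ is the main obstacle.
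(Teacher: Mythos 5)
Your handling of $k=0$ (via Propositions \ref{lemCapDintro} and \ref{lemCapSmall}) and of part (ii) is correct and uses only tools already in the paper; the identification of the limit profile $\Phi$ and of its energy $\pi\beta^2\sin^2\alpha\,C_k$ in the main case $k\ge1$, $\alpha\neq0$ is also correct. The gap is exactly where you suspect, but it sits in your \emph{upper} bound, not only in the lower one. With a \emph{fixed} cutoff $\chi$, the test function $\chi(x)\,\eps^k\Phi(x/\eps)$ produces the term $\eps^{2k}\int_\Omega|\Phi(x/\eps)|^2|\nabla\chi(x)|^2\,dx$, and since $\Phi$ tends at infinity to the constant $\tfrac12\beta\sin\alpha\,A_{0,k}$ (nonzero, e.g., for $k$ even), this is a genuine $O(\eps^{2k})$ contribution proportional to $\int|\nabla\chi|^2$; the claimed expansion $\int|\nabla f_\eps|^2=\eps^{2k}\int|\nabla\Phi|^2+o(\eps^{2k})$ is therefore false as stated and the sharp constant is not reached. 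The step is repairable (take the infimum over admissible cutoffs after $\eps\to0$, using that this infimum is $0$ in dimension $2$, or replace the constant tail of $\Phi$ by that constant times a condenser potential), but it must actually be carried out. By contrast, once any $O(\eps^{2k})$ upper bound is available, your lower bound does close: the uniform $L^\infty$ bound $|V_{s_\eps,u}|\le C\eps^k$, weak $H^1_{\rm loc}$ compactness, uniqueness of the finite-energy harmonic extension of $h$ on $\RR^2\setminus s_1$, and lower semicontinuity on each $B_R$ do not require controlling the transition layer.

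The paper's route is genuinely different and sidesteps the matching problem entirely: it computes $\mbox{\rm Cap}_{\mathcal{E}_\eps(L)}(s_\eps,P_k)$ \emph{exactly} by separation of variables in confocal elliptic coordinates, where the domain is the strip $0<\xi<\xi_\eps$ with $\xi_\eps\sim|\log\eps|$ and the outer Dirichlet condition is imposed at $\xi=\xi_\eps$. The troublesome zero mode is then the explicit linear function $a_0(0)(1-\xi/\xi_\eps)$, whose energy $\tfrac{\pi}{2}\eps^{2k}|a_0(0)|^2/\xi_\eps=O(\eps^{2k}/|\log\eps|)$ is precisely the transition-layer estimate you need, obtained for free, while the modes $j\ge1$ yield $\eps^{2k}\pi\sum_j j|a_j(0)|^2(1+o(1))$; monotonicity between two confocal ellipses squeezing $\Omega$ (the leading constant being independent of $L$) and the Taylor-remainder estimate of Lemma \ref{lem:remaining} then conclude. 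I suggest either adopting that exact computation or completing your blow-up argument with the cutoff optimization indicated above.
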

Combining Theorem \ref{propL1intro} with Theorem \ref{t:capphiNintro}
we obtain the following result.
\begin{thm}\label{t:asynull}
  Let $\lambda_N(\Omega)$ be a simple eigenvalue of the Dirichlet
  Laplacian in an open, bounded, connected set 
$\Omega\subset\RR^2$ containing $0$, with the $L^2(\Omega)$-normalized associated
  eigenfunction $u_N$.  Let $k\in \NN\cup\{0\}$,  $\beta\in\RR\setminus\{0\}$, and $\alpha\in [0,\pi)$ 
be as in expansion \eqref{eq:asyphi0} for $u_N$. For $\eps>0$ small,
let $s_\eps=[-\eps,\eps]\times \{0\}$. Then
\[
\lambda_{N}(\Omega\setminus s_\eps)-\lambda_{ N}(\Omega)= 
\begin{cases}
  \frac{2\pi}{|\log\eps|}\, u_N^2(0)\,(1+o(1)), &\text{if
  }k=0,\ \alpha\neq 0,\\[4pt]
    \eps^{2k}\,\pi\,\beta^2\sin^2\alpha\, C_k  (1+o(1)), &\text{if
    }k\ge1,\ \alpha\neq0,\\[4pt]
O\left(\eps^{2k+2}\right), &\text{if }\alpha=0,
\end{cases}
\]
as $\eps\to0^+$.
\end{thm}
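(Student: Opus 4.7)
The plan is to obtain Theorem \ref{t:asynull} as a direct consequence of the two ingredients already stated: Theorem \ref{propL1intro}, which rewrites the eigenvalue variation as $\mbox{\rm Cap}_{\Omega}(K_\eps,u_N)\,(1+o(1))$ for compact sets $K_\eps$ concentrating to a zero-capacity compact set, and Theorem \ref{t:capphiNintro}, which provides the asymptotic (respectively the order estimate) for $\mbox{\rm Cap}_{\Omega}(s_\eps,u_N)$ in each of the three regimes of the exponent $k$ and the angle $\alpha$ appearing in the local expansion \eqref{eq:asyphi0}.

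First I would verify the hypotheses of Theorem \ref{propL1intro} with the choice $K_\eps = s_\eps = [-\eps,\eps]\times\{0\}$. Since any open neighbourhood $U\subset\Omega$ of the origin contains a ball $B_r(0)$, we have $s_\eps\subset U$ for all $\eps<r$, so $s_\eps$ concentrates to the compact set $K=\{0\}$ in the sense of Definition \ref{d:concentrating}. Moreover, a single point has condenser capacity zero in $\RR^2$ (the standard logarithmic argument), so $\mbox{\rm Cap}_{\Omega}(\{0\})=0$. Thus Theorem \ref{propL1intro} applies and yields
\[
\lambda_N(\Omega\setminus s_\eps)-\lambda_N(\Omega)
=\mbox{\rm Cap}_{\Omega}(s_\eps,u_N)+o\!\left(\mbox{\rm Cap}_{\Omega}(s_\eps,u_N)\right),\qquad\text{as }\eps\to 0^+.
\]

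Next I would simply substitute the asymptotics of $\mbox{\rm Cap}_{\Omega}(s_\eps,u_N)$ supplied by Theorem \ref{t:capphiNintro}, distinguishing the three cases of the statement. If $\alpha\neq 0$ and $k=0$, using $\mbox{\rm Cap}_{\Omega}(s_\eps,u_N)=\frac{2\pi}{|\log\eps|}u_N^2(0)(1+o(1))$ and absorbing the relative error $o(1)$ into the leading term yields the first line of the conclusion. If $\alpha\neq 0$ and $k\geq 1$, the analogous substitution of $\eps^{2k}\pi\beta^2\sin^2\alpha\,C_k(1+o(1))$ produces the second line. In the case $\alpha=0$, Theorem \ref{t:capphiNintro}(ii) gives only the bound $\mbox{\rm Cap}_{\Omega}(s_\eps,u_N)=O(\eps^{2k+2})$, but this is still enough: the $o(\mbox{\rm Cap}_{\Omega}(s_\eps,u_N))$ term is then $o(\eps^{2k+2})$, so the sum remains $O(\eps^{2k+2})$, giving the third line.

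There is essentially no genuine obstacle here, since the theorem is built by assembling two previously established results; the only mild subtlety worth pointing out explicitly is the one just mentioned, namely that when only an $O$-bound on the $u_N$-capacity is available (case $\alpha=0$), one should verify that the relative error term $o(\mbox{\rm Cap}_{\Omega}(s_\eps,u_N))$ does not degrade the conclusion, which it does not because any quantity that is $o$ of something $O(\eps^{2k+2})$ is itself $O(\eps^{2k+2})$.
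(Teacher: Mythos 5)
Your proposal is correct and follows exactly the paper's route: the paper likewise obtains Theorem \ref{t:asynull} as a direct consequence of Theorems \ref{propL1intro} and \ref{t:capphiNintro}, applied with $K_\eps=s_\eps$ concentrating to the zero-capacity point $\{0\}$. Your extra remark about the $\alpha=0$ case (that the relative error $o(\mbox{\rm Cap}_\Omega(s_\eps,u_N))$ is harmless when only an $O(\eps^{2k+2})$ bound is available) is a correct, if minor, clarification that the paper leaves implicit.
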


\begin{rem}
We observe that the condition $\alpha=0$ means the segment
$s_\eps$  to be tangent to a nodal line of the limit eigenfunction
$u_N$. Hence Theorem \ref{t:asynull} provides  sharp asymptotics
of $\lambda_{N}(\Omega\setminus s_\eps)$ if the segment is transversal
to nodal lines of $u_N$, whereas it gives just an estimate on the
vanishing order of $\lambda_{N}(\Omega\setminus s_\eps)-\lambda_{
  N}(\Omega)$ when 
the segment is tangent to a nodal line. In this case we expect 
that the vanishing order will depend on 
the precision of the approximation between the
nodal line and the segment (e.g. if the nodal line is straight,
we have trivially that the $\mbox{\rm Cap}_{\Omega}\left(s_\eps,u\right)$
is zero and $\lambda_{N}(\Omega\setminus s_\eps)-\lambda_{
  N}(\Omega)=0$). 
\end{rem}

\begin{rem}
  In the case $k=1$, i.e. if $0$ is a regular point in the nodal set
  of $u_N$, we have that $\beta^2=|\nabla u_N(0)|^2$, hence the
  asymptotic expansion in 
Theorem \ref{t:asynull} has the form 
\[
\lambda_{N}(\Omega\setminus s_\eps)-\lambda_{
  N}(\Omega)=
 \eps^{2}\,\pi\,|\nabla u_N(0)|^2\sin^2\alpha\, C_k
 (1+o(1)),\quad\text{as $\eps\to0$}.
\]
\end{rem}

Another relevant example in which  ${\rm Cap}_\Omega(K_\eps,u)$ can be
sharply estimated in terms of the vanishing order of $u$ is given by
small disks   concentrating at a zero point of $u$.

\begin{thm} \label{t:capDiskphiNintro} Let
  $B_{\eps}=\overline{B}(0,\eps)=
  \{(x_1,x_2)\in\RR^2:\sqrt{x_1^2+x_2^2}\leq\eps\}$.
  For $u$ being an $L^2(\Omega)$-normalized eigenfunction of the Dirichlet
  Laplacian in an open, bounded, connected set 
$\Omega\subset\RR^2$ containing $0$, let $k\in \NN\cup \{0\}$,
  $\beta \in \RR\setminus \{0\}$ and $\alpha \in [0,\pi)$ be as in
  \eqref{eq:asyphi0}. Then
	\begin{equation}\label{eq:capDisks}
	\mbox{\rm Cap}_{\Omega}(B_{\eps},u)
	=\begin{cases}
	\frac{2\pi}{|\log\eps|}\, u^2(0)\,(1+o(1)), &\text{if }k=0,\\[4pt]
	2k\,\pi\,\eps^{2k}\,\beta^2  (1+o(1)), &\text{if }k\ge1,
	\end{cases}
	\end{equation}
	as $\eps\to0^+$.
\end{thm}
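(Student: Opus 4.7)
The plan is to split the proof by the order of vanishing $k$.

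For $k=0$, expansion \eqref{eq:asyphi0} forces $u(0)=\beta\sin\alpha\neq 0$, so Proposition~\ref{lemCapDintro} applies and reduces the question to the condenser capacity. Since $B_\eps$ is connected of diameter $2\eps$, Proposition~\ref{lemCapSmall} then gives $\mbox{\rm Cap}_\Omega(B_\eps)=\frac{2\pi}{|\log(2\eps)|}+O(1/\log^2\eps)=\frac{2\pi}{|\log\eps|}(1+o(1))$, which yields the first line of \eqref{eq:capDisks}.

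For $k\geq 1$, I would use that $B_\eps$ has positive measure and that the capacitary potential $V:=V_{B_\eps,u}$ from \eqref{eqPDECapD} coincides with $u$ on $B_\eps$ to decompose
\begin{equation*}
\mbox{\rm Cap}_\Omega(B_\eps,u)=\int_{B_\eps}|\nabla u|^2\,dx+\int_{\Omega\setminus B_\eps}|\nabla V|^2\,dx.
\end{equation*}
A direct polar-coordinate computation from \eqref{eq:asyphi0} gives $\int_{B_\eps}|\nabla u|^2=\pi\beta^2 k\,\eps^{2k}(1+o(1))$, since the leading part of $|\nabla u|^2$ near $0$ is $\beta^2 k^2 r^{2k-2}$ (the angular and radial contributions summing as $\sin^2+\cos^2$ independently of $\alpha$). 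The main task is then to prove the matching estimate $\int_{\Omega\setminus B_\eps}|\nabla V|^2=\pi\beta^2 k\,\eps^{2k}(1+o(1))$. To this end I would introduce the blow-up $W_\eps(y):=\eps^{-k}V(\eps y)$ on $\Omega_\eps:=\eps^{-1}\Omega\setminus\overline{B_1}$: this $W_\eps$ is harmonic, vanishes on $\partial(\eps^{-1}\Omega)$, and its inner trace $\eps^{-k}u(\eps y)$ converges to $\beta\sin(\alpha-kt)$ on $\partial B_1$ by \eqref{eq:asyphi0}. By the scaling $\int_{\Omega\setminus B_\eps}|\nabla V|^2\,dx=\eps^{2k}\int_{\Omega_\eps}|\nabla W_\eps|^2\,dy$, and the natural limit is the unique decaying harmonic extension $\widetilde V(y)=\beta|y|^{-k}\sin(\alpha-kt)$ on $\RR^2\setminus\overline{B_1}$, whose Dirichlet integral equals $\pi k\beta^2$.

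For the upper bound I would test the capacitary minimization against the competitor $\Phi_\eps$ defined by taking $u$ on $B_\eps$, a radial cutoff of $\beta\eps^{2k}|x|^{-k}\sin(\alpha-kt)$ on a fixed ball $B_R\Subset\Omega$, with a small correction $\zeta(|x|)(u(x)-\beta|x|^k\sin(\alpha-kt))$ to ensure continuity across $\partial B_\eps$, and zero outside; a direct polar integration yields $\int|\nabla\Phi_\eps|^2\leq\int_{B_\eps}|\nabla u|^2+\pi k\beta^2\eps^{2k}(1+o(1))$, the correction contributing only $o(\eps^{2k})$ since $u-\beta r^k\sin(\alpha-kt)=o(r^k)$ in $C^{1,\tau}$. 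The matching lower bound is obtained by extracting a weak limit $W_0$ of $W_\eps$ in $H^1_{\mathrm{loc}}(\RR^2\setminus\overline{B_1})$ (uniform $H^1$-boundedness from the upper bound, uniform $L^\infty$-boundedness from the maximum principle), identifying $W_0=\widetilde V$ via its inner boundary trace and its decay at infinity, and invoking weak lower semicontinuity of the Dirichlet integral to conclude $\liminf_{\eps\to 0}\int_{\Omega_\eps}|\nabla W_\eps|^2\geq \pi k\beta^2$. The main obstacle is precisely this last step: preventing ``escape of energy'' in the rescaled variables, equivalently ruling out growing harmonic modes $|y|^m\sin(mt+\phi_m)$ in the weak limit---this is best handled by a Fourier mode decomposition on circles $\{|y|=\rho\}$ combined with the vanishing outer Dirichlet condition on $\partial(\eps^{-1}\Omega)$.
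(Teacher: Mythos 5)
Your proposal is correct and reaches the right constants, but for $k\ge1$ it follows a genuinely different route from the paper. The paper proves Theorem \ref{t:capDiskphiNintro} by solving the capacitary problem \emph{explicitly}: Lemma \ref{l:disks} computes $\mbox{\rm Cap}_{B(0,R)}(B_\eps,P_k)$ in closed form via a Fourier expansion in polar coordinates (separately evaluating the energy inside $B_\eps$ and the energy of the exterior potential, whose radial modes are explicit combinations of $r^{\pm j}$), Proposition \ref{p:DirCapSmall_disks} transfers this to a general $\Omega$ by squeezing $\Omega$ between two concentric balls and using monotonicity, and Proposition \ref{p:n_disks} handles the Taylor remainder $u-P_k$ through Lemma \ref{lem:remaining}. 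You instead split $\mbox{\rm Cap}_\Omega(B_\eps,u)=\int_{B_\eps}|\nabla u|^2+\int_{\Omega\setminus B_\eps}|\nabla V|^2$, compute the first term directly from \eqref{eq:asyphi0}, and treat the second by blow-up: an explicit competitor for the upper bound and weak lower semicontinuity plus identification of the blow-up limit $\widetilde V(y)=\beta|y|^{-k}\sin(\alpha-kt)$ for the lower bound. Your decomposition $2k\pi\beta^2=\pi k\beta^2+\pi k\beta^2$ (interior energy of $P_k$ plus exterior energy of its decaying harmonic extension) agrees exactly with the paper's formulas \eqref{eq:EnInner}--\eqref{eq:EnOuter} specialized to $a_{k,k}=\beta\sin\alpha$, $b_{k,k}=-\beta\cos\alpha$, which is a good consistency check. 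The paper's method buys a completely explicit, self-contained computation with quantified errors (e.g.\ the $O(1/|\log\eps|)$ in the $k=0$ case); yours buys robustness — it explains where the constant comes from and would extend to removed sets for which no closed-form potential exists. For $k=0$ both arguments reduce to Proposition \ref{lemCapSmall}, so there the routes coincide.

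One small remark on your flagged ``main obstacle'': for the \emph{lower} bound there is no escape-of-energy issue to rule out. Weak $H^1_{\mathrm{loc}}$ convergence gives $\liminf_\eps\int_{B_M\setminus\overline{B_1}}|\nabla W_\eps|^2\ge\int_{B_M\setminus\overline{B_1}}|\nabla W_0|^2$ for each fixed $M$, and letting $M\to\infty$ already yields $\liminf_\eps\int_{\Omega_\eps}|\nabla W_\eps|^2\ge\int_{\RR^2\setminus\overline{B_1}}|\nabla W_0|^2$; energy lost in the limit only helps. The limit $W_0$ is then forced to be $\widetilde V$ because it is harmonic with finite Dirichlet integral on $\RR^2\setminus\overline{B_1}$ (which kills the modes $\log|y|$ and $|y|^{j}$, $j\ge1$) and its trace on $\partial B_1$ — obtained from the compactness of the trace operator on an annulus together with $V=u$ on $\partial B_\eps$ and the $C^{1,\tau}$ convergence in \eqref{eq:asyphi0} — is the zero-mean function $\beta\sin(\alpha-kt)$, which also kills the constant mode. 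The points actually requiring care are the trace identification just described and the cross terms in your competitor $\Phi_\eps$ (which are $O(\eps^{2k+1})=o(\eps^{2k})$ by Cauchy--Schwarz), not growing harmonic modes.
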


Combining Theorem \ref{propL1intro} and Theorem \ref{t:capDiskphiNintro}, we obtain the following result.

\begin{thm}\label{t:asynullDisk}
	Let $\lambda_N(\Omega)$ be a simple eigenvalue of the Dirichlet
	Laplacian in  an open, bounded, connected set 
$\Omega\subset\RR^2$ containing $0$
with the $L^2(\Omega)$-normalized associated
	eigenfunction $u_N$.  Let $k\in \NN\cup\{0\}$,  $\beta\in\RR\setminus\{0\}$, and $\alpha\in [0,\pi)$ 
	be as in expansion \eqref{eq:asyphi0} for $u_N$. Then
	\[
	\lambda_{N}(\Omega\setminus B_\eps)-\lambda_{ N}(\Omega)= 
	\begin{cases}
	\frac{2\pi}{|\log\eps|}\, u_N^2(0)\,(1+o(1)), &\text{if
	}k=0,\\[4pt]
	2k\,\pi\,\eps^{2k}\,\beta^2\, (1+o(1)), &\text{if }k\ge1,\\[4pt]
	\end{cases}
	\]
	as $\eps\to0^+$.
\end{thm}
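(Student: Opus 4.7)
The proof will be essentially a concatenation: Theorem~\ref{t:asynullDisk} is the combination of the general spectral-variation formula of Theorem~\ref{propL1intro} with the explicit $u$-capacity asymptotics for disks provided by Theorem~\ref{t:capDiskphiNintro}, so the plan is to verify that the hypotheses of Theorem~\ref{propL1intro} apply to the family $(B_\eps)_{\eps>0}$ and then simply substitute.

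First I would check applicability of Theorem~\ref{propL1intro}. The closed disks $B_\eps=\overline{B}(0,\eps)$ form a family of compact sets in $\Omega$ (for $\eps$ small enough so that $B_\eps\subset\Omega$) that is manifestly concentrating, in the sense of Definition~\ref{d:concentrating}, to the singleton $K=\{0\}$: any open neighborhood $U$ of $0$ in $\Omega$ contains $B_\eps$ for all sufficiently small $\eps$. Moreover $\mbox{\rm Cap}_{\Omega}(\{0\})=0$ in $\RR^2$ (points have zero capacity in dimension $n\ge 2$). Hence Theorem~\ref{propL1intro} gives
\begin{equation*}
 \lambda_N(\Omega\setminus B_\eps)-\lambda_N(\Omega)=\mbox{\rm Cap}_{\Omega}(B_\eps,u_N)+o\bigl(\mbox{\rm Cap}_{\Omega}(B_\eps,u_N)\bigr),\quad\text{as }\eps\to 0^+.
\end{equation*}

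Next I would insert the asymptotics of Theorem~\ref{t:capDiskphiNintro}. In the case $k=0$ we have $\mbox{\rm Cap}_{\Omega}(B_\eps,u_N)=\tfrac{2\pi}{|\log\eps|}u_N^2(0)(1+o(1))$; since $u_N(0)=\beta\sin\alpha\neq 0$ (recall $\beta\neq 0$ and, as noted after \eqref{eq:asyphi0}, for $k=0$ the value $u_N(0)$ equals $\beta\sin\alpha$, which forces $\alpha\neq 0$), this quantity is asymptotically $\tfrac{2\pi}{|\log\eps|}u_N^2(0)$ and the error term $o(\mbox{\rm Cap}_{\Omega}(B_\eps,u_N))$ is absorbed into $o(1/|\log\eps|)$. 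In the case $k\ge 1$ we have $\mbox{\rm Cap}_{\Omega}(B_\eps,u_N)=2k\pi\beta^2\eps^{2k}(1+o(1))$, and, being $\beta\neq 0$, the error term $o(\mbox{\rm Cap}_{\Omega}(B_\eps,u_N))$ is absorbed into $o(\eps^{2k})$. Plugging into the formula displayed above yields exactly the two asserted expansions.

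The proof is therefore immediate modulo the two cited results; there is no real obstacle, only the bookkeeping of $o(\cdot)$ terms. The only point worth double-checking is the case $k=0$ with $\alpha=0$, which is excluded by the hypothesis $\beta\sin\alpha=u_N(0)$ being the leading coefficient in \eqref{eq:asyphi0}: for $k=0$, one necessarily has $\alpha\neq 0$ since $\beta\neq 0$ and the limit in \eqref{eq:asyphi0} must be a nontrivial constant. Hence the three lines on the right-hand side of Theorem~\ref{t:capDiskphiNintro} collapse to the two cases stated in Theorem~\ref{t:asynullDisk}, and no residual subcase needs separate treatment.
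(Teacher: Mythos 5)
Your proposal is correct and is essentially identical to the paper's own proof, which simply states that Theorem \ref{t:asynullDisk} follows directly from combining Theorem \ref{propL1intro} with Theorem \ref{t:capDiskphiNintro}; your verification that $(B_\eps)$ concentrates to the zero-capacity set $\{0\}$ and your bookkeeping of the $o(\cdot)$ terms are exactly the (implicit) content of that one-line argument. The only tiny slip is your reference to ``three lines'' in Theorem \ref{t:capDiskphiNintro}, which in fact already has only the two cases $k=0$ and $k\ge1$ (no $\alpha=0$ subcase arises for disks), but this does not affect the argument.
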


\begin{rem}
	In the special case $k=1$, that is to say if $0$ is a regular point in the nodal set of $u_N$, Theorem \ref{t:asynullDisk} gives the asymptotic expansion
\[
\lambda_{N}(\Omega\setminus B_\eps)-\lambda_{
          N}(\Omega)=2\,\pi\,\eps^{2}\,|\nabla u_N(0)|^2(1+o(1))
        ,\quad\text{as $\eps\to0$}.
\]
\end{rem}

\subsection{Aharonov--Bohm potentials with varying poles}
The special attention devoted to planar domains in the first part of
the paper is well understood in the context of the applications given
in the second part to the problem of spectral stability for Aharonov--Bohm potentials with varying poles.
For $a = (a_1, a_2) \in \RR^2$,
the so-called Aharonov-Bohm magnetic potential with pole
$a$ and circulation $1/2$ is defined as 
\[
A_a(x) = \frac12 \left( \frac{ - (x_2 - a_2)}{(x_1 - a_1)^2 + (x_2 -
    a_2)^2} , \frac{x_1 - a_1}{(x_1 - a_1)^2 + (x_2 - a_2)^2} \right),
\quad x=(x_1,x_2) \in \RR^2 \setminus \{a\}.
\]
The set of papers \cite{abatangelo2015sharp, abatangelo2016leading,
  abatangelo2016boundary, bonnaillie2014eigenvalues,
 noris2015aharonov}
deals with the dependence on the pole $a$ of the spectrum
of Schr\"odinger operators with Aharonov-Bohm vector potentials,
i.e. of operators $(i\nabla +A_{a})^2$ acting on functions
$u:\RR^2\to\CC$ as
\[
(i\nabla +A_{a})^2 u=-\Delta u+2iA_{a}\cdot\nabla u+|A_{a}|^2 u.
\]
In particular, the aforementioned set of papers provides a complete
picture of sharp asymptotics for simple eigenvalues when the pole $a$
is moving in $\overline \Omega$.  Of course, one can consider even
potentials which are sum of different Aharonov--Bohm potentials with
poles located at different points in the domain, being the
differential Schr\"odinger operator defined analogously.  Concerning
this, in \cite{lena2015} the author proves continuity of eigenvalues
for Schr\"odinger operators with different Aharonov--Bohm potentials
even in the case of coalescing poles.  As an application of the
results proved in the first part of the  present paper,
in section \ref{sec:asympt-expans-coal} we begin to tackle the problem
of coalescing poles, looking for  sharp asymptotics for
simple eigenvalues.  In this direction, we obtain the following result
under a symmetry assumption on the domain.
\begin{thm}\label{t:ab}
Let $\sigma:\RR^2\to \RR^2$, 
$\sigma(x_1,x_2)=(x_1,-x_2)$.
 Let $\Omega$ be 
an open, bounded, and connected set in $\RR^2$
satisfying $\sigma(\Omega)=\Omega$ and $0\in\Omega$. 
Let $\lambda_N(\Omega)$ be a simple eigenvalue of the Dirichlet Laplacian on $\Omega$
and $u_N$ be a $L^2(\Omega)$-normalized eigenfunction associated
 to $\lambda_N(\Omega)$.
Let $k\in \NN\cup\{0\}$,  $\beta\in\RR\setminus\{0\}$, and $\alpha\in [0,\pi)$ 
be as in expansion \eqref{eq:asyphi0} for $u_N$ and assume that $\alpha\neq0$.

For $a>0$ small, let $a^-=(-a,0)$ and $a^+=(a,0)$ be the poles of the following Aharonov--Bohm potential
\begin{equation*}
	\pot(x):=
-A_{a^-}+A_{a^+}
= -\frac12\frac{(-x_2,x_1+a)}{(x_1+a)^2+x_2^2}+\frac12\frac{(-x_2,x_1-a)}{(x_1-a)^2+x_2^2}
\end{equation*}
and let $\lambda_N^a$ be the $N$-th eigenvalue for $(i\nabla +\pot)^2$.
Then
\begin{equation*}
   \lambda_N^a - \lambda_N(\Omega) 
 =\begin{cases}
   \frac{2\pi}{|\log a|}\, |u_N(0)|^2\,(1+o(1)), &\text{if }k=0,\\[4pt]
    a^{2k}\,\pi\,\beta^2\sin^2\alpha\, C_k  (1+o(1)), &\text{if }k\ge1,
  \end{cases}
\end{equation*}
as $a\to 0^+$, being $C_k$ a positive constant depending only on $k$
(see \eqref{eq:1}).
\end{thm}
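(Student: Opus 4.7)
The plan is to reduce Theorem \ref{t:ab} to Theorem \ref{t:asynull} via the spectral equivalence developed in Section \ref{sec:spectr-equiv-lapl} between the Aharonov--Bohm operator with two poles on the symmetry axis and the Dirichlet Laplacian on the slit domain $\Omega\setminus s_a$, where $s_a=[-a,a]\times\{0\}$.

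The first step is to invoke the equivalence of Section \ref{sec:spectr-equiv-lapl}. Since $\Omega$ is invariant under the reflection $\sigma$ and the two poles lie on the fixed axis, the magnetic eigenfunctions of $(i\nabla+\pot)^2$ can be chosen with nodal set containing the segment $s_a$. Via the gauge factor $e^{-\frac{i}{2}(\theta_+-\theta_-)}$, where $\theta_\pm$ denote the polar angles based at $a^\pm$, one obtains a single-valued function on $\Omega\setminus s_a$ thanks to the cancellation of the two opposite half-fluxes when encircling both poles, and this conjugates $(i\nabla+\pot)^2$ to $-\Delta$ on $\Omega\setminus s_a$ up to a sign jump across the slit. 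Combining this with the decomposition into $\sigma$-symmetric and $\sigma$-antisymmetric parts, one identifies
\[
\lambda_N^a=\lambda_N(\Omega\setminus s_a)
\]
for all sufficiently small $a>0$.

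The second step ensures the index matching. By Remark \ref{r:capcon}\,(i) applied to the zero-capacity singleton $\{0\}\subset\RR^2$ together with Theorem \ref{propL1intro}, one has $\lambda_N(\Omega\setminus s_a)\to\lambda_N(\Omega)$ as $a\to 0^+$, and simplicity of $\lambda_N(\Omega)$ then makes the identification above unambiguous. The third and last step is a direct application of Theorem \ref{t:asynull} with $\eps=a$: since $u_N$ can be taken real-valued one has $u_N^2(0)=|u_N(0)|^2$, and under the assumption $\alpha\neq 0$ the two stated expansions (logarithmic for $k=0$ and polynomial of order $a^{2k}$ for $k\geq 1$) are reproduced verbatim.

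The main obstacle is the first step, namely verifying that the gauge-induced sign jump across $s_a$ meshes cleanly with the $\sigma$-symmetry decomposition, so that the full AB spectrum on $\Omega$ is recovered from the Dirichlet spectrum of $\Omega\setminus s_a$ without losing multiplicities or producing spurious eigenvalues; once the equivalence of Section \ref{sec:spectr-equiv-lapl} is in place, Theorem \ref{t:ab} follows automatically from the segment-removal asymptotics of Theorem \ref{t:asynull}.
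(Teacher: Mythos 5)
Your overall route---reducing to the Dirichlet Laplacian on the slit domain via the gauge and symmetry analysis of Section \ref{sec:spectr-equiv-lapl} and then applying Theorem \ref{t:asynull}---is the same as the paper's. However, your first step contains a genuine gap. It is not true that the magnetic eigenfunctions ``can be chosen with nodal set containing the segment $s_a=[-a,a]\times\{0\}$'', nor that ``the full AB spectrum on $\Omega$ is recovered from the Dirichlet spectrum of $\Omega\setminus s_a$''. The isospectrality result (Corollary \ref{cor:iso}) says that the AB spectrum is the union, with multiplicities, of \emph{two} families: the spectrum of $H_{NDN}$, whose eigenfunctions lie in $L^2_{K,\Sigma}(\Omega)$ and by Lemma \ref{lemZeroSet} vanish on $s_a$ (these are Dirichlet eigenvalues of $\Omega\setminus s_a$), and the spectrum of $H_{DND}$, whose eigenfunctions lie in $L^2_{K,a\Sigma}(\Omega)$ and vanish instead on $(\Omega\cap\mathcal R)\setminus s_a$, corresponding to a Neumann condition on the segment. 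The identification $\lambda_N^a=\lambda_N(\Omega\setminus s_a)$ therefore requires showing that $\lambda_N^a$ belongs to the first family, and nothing in your argument rules out the second; the $H_{DND}$ eigenvalues do not, in general, converge to Dirichlet eigenvalues of $\Omega$ with non-axis-vanishing eigenfunctions, so landing in the wrong family would destroy the conclusion.

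This dichotomy is precisely where the hypothesis $\alpha\neq0$ enters, beyond its role inside Theorem \ref{t:asynull}, and it also requires the continuity result of \cite{lena2015}, which your proposal never invokes. The paper's argument runs as follows: by \cite{lena2015}, $\lambda_N^a\to\lambda_N(\Omega)$ and $\lambda_N^a$ is simple for small $a$, so a $K$-real eigenfunction $u_N^a$ must lie entirely in one of the two summands of the orthogonal decomposition \eqref{eq:orthogonal}, and $u_N^a$ can be chosen so that $u_N^a\to u_N$ in $C^2_{\rm loc}(\Omega\setminus\{0\})$. If $u_N^a\in L^2_{K,a\Sigma}(\Omega)$, then $u_N^a\equiv0$ on $(\RR\setminus(-a,a))\times\{0\}$, and passing to the limit would force $u_N\equiv0$ on the whole axis, contradicting $\alpha\neq0$. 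Hence $u_N^a\in L^2_{K,\Sigma}(\Omega)$, so $\lambda_N^a$ is an eigenvalue of $H_{NDN}$ and therefore a Dirichlet eigenvalue of $\Omega\setminus s_a$; the index matching you sketch in your second step (only one Dirichlet eigenvalue of $\Omega\setminus s_a$ near the simple eigenvalue $\lambda_N(\Omega)$) then yields $\lambda_N^a=\lambda_N(\Omega\setminus s_a)$. Your remaining steps are fine once this is supplied.
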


We observe that the assumption $\alpha\neq0$ means that the poles are
moving along a line which is in fact not tangent to any nodal line of
the limit eigenfunction $u_N$.

The main idea behind the proof of Theorem \ref{t:ab} is the spectral
equivalence between the Aharonov--Bohm operator in an axially symmetric
domain and the Dirichlet Laplacian in the domain obtained by removing
either the segment
joining the poles or its complement in the axis. Such isospectrality result is established in
section \ref{sec:spectr-equiv-lapl} and extends the isospectrality result proved in \cite{BN-He-HH2009} 
for a single pole to the case of two
poles. Once the spectral equivalence is established,
Theorem \ref{t:ab} follows as an application of Theorem
\ref{t:asynull}.

 A weakening of the symmetry  assumption required in  Theorem
 \ref{t:ab} above presents some significant additional difficulties due
 to the general shape of nodal lines of eigenfunctions (i.e. they are
 not necessarily a straight segment);  this problem is
   treated in \cite{AFL-2} in the case $k=0$.

\section{$u$-capacity }

We devote this section to explicit calculations of 
$u$-capacity in several situations.

\subsection{General compact sets concentrating away from zeros}
In this subsection, we present the case of general domains which are
concentrating to a point away from zeros of the eigenfunction $u$ and
prove the asymptotic relation, stated in Proposition \ref{lemCapDintro},
between the $u$-capacity and the condenser capacity. In
order to derive such asymptotics we first state the following lemma,
which essentially rewrites \cite[formula (53)]{Courtois1995Holes} in a
form which is more convenient for our purposes.

\begin{lem}\label{l:53Cour}
Let $\Omega$ be a bounded, connected open set in $\RR^n$ and let $K$ be a
compact set in $\Omega$. 
Let  $\eta$ be any smooth function such that $\mathop{\rm
  supp}\eta\subset\Omega$ and $\eta\equiv 1$ in a neighborhood of~$K$. 
If $u\in H^1_0(\Omega)\cap C^2(\Omega)$ then 
	\begin{equation}\label{eq:9}
\mbox{\rm Cap}_{\Omega}(K,u)=\mathcal L(u,K)
-\int_\Omega   V_{K,u} V_K \Delta(\eta u)\,dx
-2\int_{\Omega}V_{K,u}\nabla V_K\cdot
    \nabla (\eta u)\,dx
\end{equation}
where 
\begin{equation}\label{eq:8}
\left(\min_{x\in K}u^2(x)\right) \mbox{\rm Cap}_\Omega(K)\leq 
\mathcal L(u,K)\leq
\left(\max_{x\in K}u^2(x)\right) \mbox{\rm Cap}_\Omega(K).
\end{equation}
\end{lem}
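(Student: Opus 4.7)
The plan is to introduce
\[
\mathcal L(u,K):=2\int_\Omega (\eta u)\nabla V_K\cdot\nabla(\eta u)\,dx=\int_\Omega \nabla V_K\cdot\nabla\bigl((\eta u)^2\bigr)\,dx
\]
as the natural candidate for the sandwiched quantity, derive \eqref{eq:9} as an algebraic consequence of two weak formulations, and then read off \eqref{eq:8} by interpreting $-\Delta V_K$ as the equilibrium measure supported on $K$.

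First I would test the weak formulation \eqref{eq:4} of $V_{K,u}$ against $V_{K,u}-V_K\eta u$: both functions vanish on $\partial\Omega$ and coincide with $u$ on $K$ (since $\eta\equiv 1\equiv V_K$ there), so their difference belongs to $H^1_0(\Omega\setminus K)$. This gives
\[
\mbox{\rm Cap}_{\Omega}(K,u)=\int_\Omega \nabla V_{K,u}\cdot\nabla(V_K\eta u)\,dx,
\]
and the product rule splits the integrand into $\int V_K\nabla V_{K,u}\cdot\nabla(\eta u)$ plus $\int(\eta u)\nabla V_K\cdot\nabla V_{K,u}$. The first piece I would reshape via the integration by parts $\int\nabla(V_{K,u}V_K)\cdot\nabla(\eta u)\,dx=-\int V_{K,u}V_K\,\Delta(\eta u)\,dx$, valid because $V_{K,u}V_K\in H^1_0(\Omega)$ and $\eta u\in C^2_c(\Omega)$; applying the product rule to $\nabla(V_{K,u}V_K)$ then expresses it as $-\int V_{K,u}V_K\,\Delta(\eta u)\,dx-\int V_{K,u}\nabla V_K\cdot\nabla(\eta u)\,dx$.

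For the second piece I would invoke the weak formulation $\int\nabla V_K\cdot\nabla\varphi\,dx=0$ for $\varphi\in H^1_0(\Omega\setminus K)$, with the specific choice $\varphi:=(\eta u)(V_{K,u}-\eta u)$. This is admissible because $\varphi$ vanishes on $\partial\Omega$ and equals $u(u-u)=0$ on $K$; expanding $\nabla\varphi$ and pairing with $\nabla V_K$ yields
\[
\int_\Omega (\eta u)\nabla V_K\cdot\nabla V_{K,u}\,dx=2\int_\Omega (\eta u)\nabla V_K\cdot\nabla(\eta u)\,dx-\int_\Omega V_{K,u}\nabla V_K\cdot\nabla(\eta u)\,dx.
\]
Summing the two reformulations produces exactly \eqref{eq:9} with $\mathcal L(u,K)$ as defined above.

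Finally, for \eqref{eq:8} I would invoke the standard interpretation $\int_\Omega\nabla V_K\cdot\nabla\psi\,dx=\int_K\psi\,d\mu_K$ for every $\psi\in H^1_0(\Omega)$ continuous on $K$, where $\mu_K:=-\Delta V_K$ is a positive Borel measure concentrated on $K$ with $\mu_K(K)=\mbox{\rm Cap}_\Omega(K)$ (seen by testing with $\psi=\eta$). Applied to $\psi=(\eta u)^2$ this yields $\mathcal L(u,K)=\int_K u^2\,d\mu_K$, since $\eta\equiv 1$ on $K$, and the sandwich \eqref{eq:8} follows from the pointwise bound $\min_K u^2\le u^2\le\max_K u^2$ on $K$. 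The only genuine obstacle is recognising that $\varphi=(\eta u)(V_{K,u}-\eta u)$ is the correct test function in the weak formulation of $V_K$: it is the natural admissible choice that trades the unwanted $\int(\eta u)\nabla V_K\cdot\nabla V_{K,u}$ for precisely $2\int(\eta u)\nabla V_K\cdot\nabla(\eta u)$ matching $\mathcal L(u,K)$, modulo the remainder $\int V_{K,u}\nabla V_K\cdot\nabla(\eta u)$ appearing in \eqref{eq:9}. Everything else is careful bookkeeping of the product rule.
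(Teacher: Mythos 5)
Your argument is correct, but it takes a genuinely different route from the paper's. The paper first assumes $K$ regular, obtains \eqref{eq:9} by classical Green's formulas so that $\mathcal L(u,K)$ appears as the boundary integral $\int_{\partial(\Omega\setminus K)}u^2\partial_{\nu}V_K\,d\sigma$, deduces \eqref{eq:8} from Hopf's lemma together with $\int_{\partial K}|\partial_{\nu}V_K|\,d\sigma=\mbox{\rm Cap}_{\Omega}(K)$, and finally handles general $K$ by approximating with a decreasing family of regular compact sets and invoking Remark \ref{r:capcon}(ii). You instead stay entirely within the weak formulations for an arbitrary compact $K$: the test functions $V_{K,u}-V_K\eta u$ (in the equation for $V_{K,u}$) and $(\eta u)(V_{K,u}-\eta u)$ (in the equation for $V_K$) do produce \eqref{eq:9} with $\mathcal L(u,K)=\int_{\Omega}\nabla V_K\cdot\nabla\big((\eta u)^2\big)\,dx$, and \eqref{eq:8} then follows from reading $-\Delta V_K$ as the positive equilibrium measure $\mu_K$ supported in $K$ with $\mu_K(K)=\mbox{\rm Cap}_{\Omega}(K)$. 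What your approach buys is the elimination of both the regularity assumption and the limiting argument; what it costs is the importation from potential theory of the positivity of $-\Delta V_K$ as a measure concentrated on $K$ — precisely the fact the paper extracts from Hopf's lemma in the regular case. That fact is classical (it is the measure $\mu_A$ of Theorem \ref{t:courtois1.2}, and follows from the obstacle-problem characterization of $V_K$), so relying on it is legitimate, but it should be cited or proved. Two points deserve to be spelled out in a written version: the admissibility of your test functions, e.g. by decomposing $V_{K,u}-V_K\eta u=(V_{K,u}-u)+(1-\eta)u+\eta u(\eta_K-V_K)+\eta u(1-\eta_K)$ and checking each summand lies in $H^1_0(\Omega\setminus K)$ (using that $\eta u$ is $C^2$ with compact support and that $V_K$, $V_{K,u}$ are bounded); and the extension of the pairing $\int_{\Omega}\nabla V_K\cdot\nabla\psi\,dx=\int_K\psi\,d\mu_K$ from $\psi\in C^{\infty}_c(\Omega)$ to $\psi=(\eta u)^2$, which follows by mollification since $\mu_K$ is a finite measure. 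With these details added, the proof is complete.
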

\begin{proof}
Let us first assume that $K$ is a regular compact set, meaning that
$K$ is the closure  an open smooth set. Then
\begin{align*}
  \int_{\Omega}\left|\nabla V_{K,u}\right|^2\,dx
  &=\int_{\Omega\setminus K}\left|\nabla V_{K,u}\right|^2\,dx+\int_{K}\left|\nabla u\right|^2\,dx\\
  &=\int_{\partial (\Omega\setminus K)}V_{K,u}\partial_{ \nu}
    V_{K,u}\,d\sigma
    +\int_{\partial K}u\partial_{\nu}u\,d\sigma-\int_{K}u\Delta
    u\,dx\\
 &=\int_{\partial (\Omega\setminus K)}V_{K}u\partial_{ \nu}
    V_{K,u}\,d\sigma
    +\int_{\partial K}u\partial_{\nu}u\,d\sigma-\int_{K}u\Delta(\eta u)\,dx.
\end{align*}
On the other hand
\begin{align*}
  &\int_{\partial (\Omega\setminus K)}V_{K}u\partial_{ \nu} V_{K,u}\,d\sigma
    =\int_{\partial (\Omega\setminus K)}V_{K}(\eta u)\partial_{ \nu} V_{K,u}\,d\sigma
    =\int_{\Omega\setminus K}\nabla\left(V_K\eta u\right)\cdot \nabla V_{K,u}\,dx\\
  &=\int_{\Omega\setminus K}\eta u\nabla V_K\cdot \nabla V_{K,u}\,dx
    +\int_{\Omega\setminus K}V_{K}\nabla (\eta u) \cdot \nabla V_{K,u}\,dx\\
  &=\int_{\partial(\Omega \setminus K)}uV_{K,u}\partial_{\nu}V_K\,d\sigma
    { -}\int_{\Omega\setminus K}V_{K,u}\nabla V_K\cdot \nabla (\eta u)\,dx
    +\int_{\partial(\Omega\setminus K)}V_KV_{K,u}\partial_{\nu}(\eta u)\,d\sigma\\
  &\qquad -\int_{\Omega\setminus K}V_KV_{K,u}\Delta (\eta
    u)\,dx-\int_{\Omega\setminus K}V_{K,u}\nabla V_K\cdot \nabla(\eta  u)\,dx\\
  &=\int_{\partial(\Omega \setminus K)}u^2\partial_{\nu}V_K\,d\sigma-\int_{\partial K}u\partial_{\nu}u\,d\sigma
    -\int_{\Omega\setminus K}V_KV_{K,u}\Delta(\eta u)\,dx
    -2\int_{\Omega\setminus K}V_{K,u}\nabla V_K\cdot \nabla (\eta u)\,dx.
\end{align*}
Hence we obtain that, if $K$ is regular, then 
\begin{equation}\label{eq:6}
	\mbox{\rm Cap}_{\Omega}(K,u)=\int_{{\partial}(\Omega\setminus K)}u^2\partial_{\nu}V_{K}\,d\sigma
-\int_{\Omega}V_KV_{K,u}\Delta(\eta u)\,dx-2\int_{\Omega}V_{K,u}\nabla V_K\cdot
                  \nabla (\eta u)\,dx.
              \end{equation}
If $K$ is a generic compact set, then there exist a decreasing family of regular
compact sets $\{K_\eps\}_{\eps>0}$ concentrating at $K$ such that 
 $K=\bigcap_{\eps>0} K_\eps$.
If  $\eta\in C^\infty_{\rm c}(\Omega)$ is any smooth function such
that $\eta\equiv 1$ in a neighborhood of~$K$, then $\eta\equiv 1$ also
in a neighborhood of $K_\eps$ for $\eps$ sufficiently small. Writing
\eqref{eq:6} for $K_\eps$ and $\eta$ and
passing to the limit, in view of Remark \ref{r:capcon} (case (ii)) we
obtain that
\[
\mbox{\rm Cap}_{\Omega}(K,u)=\mathcal L(u,K)
-\int_{\Omega}V_KV_{K,u}\Delta(\eta u)\,dx
  { -2\int_{\Omega}V_{K,u}\nabla V_K\cdot
    \nabla (\eta u)\,dx}
\]
with
$\mathcal L(u,K)=\lim_{\eps\to 0^+}\int_{{\partial}(\Omega\setminus
  K_\eps)}u^2\partial_{\nu}V_{K_\eps}\,d\sigma$.
By Hopf's Lemma, $\partial_{\nu}V_{K_\eps}$ is positive on
$\partial K_{\eps}$, being $\nu$ the exterior normal vector to
$\Omega\setminus K_\eps$.  Moreover, by integration by parts, we have
that
$\int_{\partial K_{\eps}}|\partial_{\nu}V_{K_{\eps}}|\,d\sigma=\mbox{\rm
  Cap}_{\Omega}(K_{\eps})$. Hence
\begin{multline*}
\left(\min_{K_{\eps}}u^2\right)\mbox{\rm Cap}_{\Omega}(K_{\eps})
\le
\min_{\partial K_{\eps}}u^2\int_{\partial K_{\eps}}\left|\partial_{\nu}V_{K_{\eps}}\right|\,d\sigma\\
\le
\int_{{\partial}(\Omega\setminus K_\eps)}u^2\partial_{\nu}V_{K_\eps}\,d\sigma
\le \max_{\partial K_{\eps}}u^2\int_{\partial
  K_{\eps}}\left|\partial_{\nu}V_{K_{\eps}}\right|\,d\sigma
\le \left(\max_{K_{\eps}}u^2\right)\mbox{\rm Cap}_{\Omega}(K_{\eps}).
\end{multline*}
By Remark \ref{r:capcon} (case (ii)) and continuity of $u$, passing to the limit in the
above estimate yields \eqref{eq:8}, thus completing the proof.\end{proof}
From Lemma \ref{l:53Cour} we derive  Proposition \ref{lemCapDintro}.
\begin{proof}[Proof of Proposition \ref{lemCapDintro}]
 Let  $\eta\in C^\infty_{\rm c}(\Omega)$ be a smooth function such
that $\eta\equiv 1$ in a neighborhood of $x_0$, so that \eqref{eq:9}
can be written for $K_\eps$ and $\eta$ for $\eps$ sufficiently small. The fact that $K_{\eps}$ concentrates to $x_0$ as $\eps\to 0$ and
  the continuity of $u$ 
  implies that 
\[
\lim_{\eps\to 0}\min_{ K_{\eps}}u^2=\lim_{\eps\to
  0}\max_{K_{\eps}}u^2=u^2(x_0),
\]
so that $\mathcal L(u,K_\eps)=u^2(x_0) \mbox{\rm
  Cap}_{\Omega}(K_{\eps})+o(\mbox{\rm Cap}_{\Omega}(K_{\eps}))$ as
$\eps\to 0^+$.
From Cauchy-Schwarz Inequality and Corollary \ref{corNormL2} we deduce
that  
\[\left|\int_{\Omega}V_{K_{\eps}}V_{K_{\eps},u}\Delta(\eta u)\,dx\right|\le
\|\Delta(\eta u)\|_{L^{\infty}(\Omega)}\|V_{K_{\eps}}\|_{L^2(\Omega)}
\|V_{K_{\eps},u}\|_{L^2(\Omega)}
=o\left(\mbox{\rm Cap}_{\Omega}(K_{\eps})\right),
\]
 as $\eps\to0^+$. According to Cauchy-Schwarz Inequality and Corollary \ref{corNormL2},
\[
\left|\int_{\Omega}V_{K_\eps,u_N}\nabla V_{K_{\eps}}\cdot
  \nabla (\eta u)\,dx\right| \le\left\|\nabla
  (\eta u)\right\|_{L^{\infty}(\Omega)}\left\|\nabla
  V_{K_{\eps}}\right\|_{L^2(\Omega)}\|V_{K_{\eps},u_N}\|_{L^2(\Omega)}
=o\left(\mbox{\rm Cap}_{\Omega}(K_{\eps})\right),
\]
as $\eps\to0^+$. Equation \eqref{eq:NCap} then follows from \eqref{eq:9}.
\end{proof}

\subsection{Capacities in dimension 2}\label{sec:capac-dimens-2}

In this subsection we present some explicit computations for
capacities of compact sets concentrating to a point in a planar
domain.  

To this aim, we first derive the following estimate of the $h$-capacity in terms of the vanishing order of
the function $h$ at the concentration point of  compact sets.
\begin{lem}\label{lem:remaining}
Let $\Omega\subset\RR^n$ be a bounded connected open set 
  with $n\geq 2$ and $0\in\Omega$ and  let $\{K_\eps\}_{\eps>0}$ be a family of compact
  sets contained in $\Omega$ such that, for some $C>0$ and $\eps$
  sufficiently small, 
\[
K_\eps\subset \overline{B}(0,C\eps).
\]
Let $h\in H^1(\Omega)$ be such that $h(x)=O(|x|^{k+1})$ and $|\nabla
h(x)|=O(|x|^k)$ as $|x|\to0$ for some $k\in\NN\cup\{0\}$. Then
\[
\mbox{\rm Cap}_\Omega(K_\eps,h)=O(\eps^{2k+n})\quad\text{as }\eps\to0.
\]
\end{lem}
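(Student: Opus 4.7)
The plan is to bound $\mbox{\rm Cap}_\Omega(K_\eps, h)$ from above by constructing a single explicit admissible competitor in the variational problem \eqref{eq:fcap}. The recipe is to localize $h$ by a smooth cutoff $\psi_\eps$ that is identically $1$ on $\overline{B}(0,C\eps) \supset K_\eps$, vanishes outside $B(0, 2C\eps)$, and satisfies $|\nabla \psi_\eps| \le C_0/\eps$ for some $C_0$ independent of $\eps$. For $\eps$ small the support of $\psi_\eps$ is contained in $\Omega$, so $\psi_\eps$ can play the role of the cutoff $\eta_{K_\eps}$ in the definition of the $h$-capacity, and it is enough to estimate $\mbox{\rm Cap}_\Omega(K_\eps,\psi_\eps h)$.

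As a test function I would take $f_\eps := \psi_\eps h$. Then $f_\eps \in H^1_0(\Omega)$ (being a compactly supported $H^1$-function) and $f_\eps - \psi_\eps h \equiv 0$ trivially belongs to $H^1_0(\Omega \setminus K_\eps)$, so $f_\eps$ is admissible and $\mbox{\rm Cap}_\Omega(K_\eps,h) \le \int_\Omega |\nabla f_\eps|^2\,dx$. Expanding $\nabla(\psi_\eps h) = h\,\nabla \psi_\eps + \psi_\eps\,\nabla h$ and using $(a+b)^2 \le 2a^2+2b^2$ reduces the matter to estimating $\int \psi_\eps^2 |\nabla h|^2$ and $\int h^2 |\nabla \psi_\eps|^2$.

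Both integrals are localized to $B(0,2C\eps)$, so the pointwise hypotheses on $h$ and $\nabla h$ give, by integration in spherical coordinates,
\[
\int_\Omega \psi_\eps^2|\nabla h|^2\, dx \le M^2\int_{B(0,2C\eps)}|x|^{2k}\,dx = O(\eps^{2k+n}),
\]
and, using $|h(x)|\le M|x|^{k+1}\le M(2C\eps)^{k+1}$ on $\mathop{\rm supp}\psi_\eps$ together with $|\nabla \psi_\eps|\le C_0/\eps$ and $|B(0,2C\eps)|=O(\eps^n)$,
\[
\int_\Omega h^2 |\nabla \psi_\eps|^2\,dx = O\bigl(\eps^{-2}\cdot\eps^{2k+2}\cdot\eps^n\bigr)=O(\eps^{2k+n}).
\]
Summing the two contributions gives the claimed bound.

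No real obstacle arises beyond correctly choosing the scale of the cutoff: the factor $\eps^{-2}$ from $|\nabla \psi_\eps|^2$ is exactly absorbed by the extra power in $h(x)=O(|x|^{k+1})$ compared with $\nabla h(x)=O(|x|^k)$, so the two contributions are of the same order $\eps^{2k+n}$ and neither dominates. This balance is precisely what pins down the exponent $2k+n$ and explains why both pointwise hypotheses on $h$ and $\nabla h$ are needed.
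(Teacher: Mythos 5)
Your proof is correct and follows essentially the same route as the paper's: a rescaled cutoff equal to $1$ on $\overline{B}(0,C\eps)$ and supported in $B(0,2C\eps)$, the test function $\psi_\eps h$, and the observation that the $\eps^{-2}$ from $|\nabla\psi_\eps|^2$ is compensated by $h=O(|x|^{k+1})$ so both terms are $O(\eps^{2k+n})$. The only cosmetic difference is that the paper first reduces to $K_\eps=\overline{B}(0,C\eps)$ by monotonicity and bounds $\|\nabla(\psi_\eps h)\|_{L^\infty}$ before integrating, whereas you test directly for $K_\eps$ and estimate the two integrals separately; these are equivalent.
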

\begin{proof}
By monotonicity of the $h$-capacity, it is enough to prove that 
 \begin{equation*}
   \label{eqRestCap}
   \mbox{\rm Cap}_{\Omega}\left(\overline B(0,C\eps),h\right)=O\left(\eps^{2k+n}\right) \mbox{ as } \eps \to 0.
 \end{equation*}
 To do this, let us fix a smooth function $\varphi:\RR^n\to\RR$ supported in
 $B(0,2)$ and equal to $1$ on $\overline B (0,1)$. Let us define
 \begin{equation*}
   \varphi_{\eps}(x):=\varphi\left(\frac{x}{C\eps}\right)
\quad\text{and}\quad
   h_{\eps}:=\varphi_{\eps}h.
 \end{equation*}
 The function $h_{\eps}$ coincides with $h$ on $\overline B(0,C\eps)$,
 so we have, by definition of the capacity,
 \begin{equation*}
   \mbox{\rm Cap}_{\Omega}\left(\overline B(0,C\eps),h\right)\le \int_{\Omega}|\nabla h_{\eps}|^2\,dx.
 \end{equation*}
 On the other hand, for any $x \in \Omega$,
 \begin{align*}
   \left|\nabla h_{\eps}(x)\right|^2&\le 2\left(\varphi_{\eps}^2 (x)\left|\nabla h(x)\right|^2+h ^2 (x)\left|\nabla \varphi_{\eps}(x)\right|^2\right)\\
  & =2\left(\varphi^2\left( \frac{x}{C\eps}\right)\left|\nabla
       h(x)\right|^2+\frac{1}{C^2\eps^2}h^2 (x)\left|\nabla
       \varphi\left(\frac{x}{C\eps}\right)\right|^2\right).
 \end{align*}
 Since
 $|\nabla h|=O(|x|^k)$ as $|x|\to0$ and $h_\eps$ is supported in $B(0,2C\eps)$,
 then
 $\left\|\nabla h_{\eps}\right\|_{L^{\infty}(\Omega)}\le \text{\rm const\,}\eps^k$. Therefore
 \begin{equation*}
   \int_{\Omega}|\nabla h_{\eps}|^2\,dx =O(\eps^{2k+n}),
 \end{equation*}
 which proves the claim.
\end{proof}

In order to derive sharp asymptotics in both cases of condenser capacities of generic compact sets
and of $u$-capacities of segments, a key tool is the following computation
of capacity of segments in ellipses.
For $L>0$ and $\eps>0$, we denote as 
\begin{equation}\label{eq:12}
\mathcal{E}_{\eps}(L)=\left\{(x_1,x_2)\in\RR^2:\,
  \frac{x_1^2}{L^2+\eps^2}+\frac{x_2^2}{L^2}<1\right\}
\end{equation}
 the  interior of the ellipse centered at $0$ with major semi-axis  of length $\sqrt{L^2+\eps^2}$ and minor
 semi-axis of length $L$.  Furthermore, for
 every $\eps>0$ we denote as 
\begin{equation}\label{eq:13}
s_\eps=[-\eps,\eps]\times \{0\}
\end{equation}
 the
 segment of length  $2\eps$ and center $0$ on the $x_1$-axis.

\begin{lem}\label{l:ellpse}
Let  $k\in\NN\cup\{0\}$ and let $P_k$ be a homogeneous polynomial of degree $k\ge0$, i.e
\begin{equation}\label{eq:14}
P_k(x_1,x_2)=\sum_{j=0}^{k}
 c_j {x_1}^{k-j}{x_2}^j
\end{equation}
 for some $c_0,c_1,\dots,c_k\in\RR$.
	Then, for every $L>0$, 
	\begin{equation*}
  \mbox{\rm Cap}_{\mathcal{E}_{\eps}(L)}(s_{\eps},P_k)
 =\begin{cases}
   \frac{2\pi}{|\log\eps|}\, c_0^2\,\left(1+O\left(\frac1{|\log\eps|}\right)\right) , &\text{if }k=0,\\[4pt]
    \eps^{2k}\,c_0^2\, \pi C_k (1+o(1)), &\text{if }k\ge1,
  \end{cases}
	\end{equation*}
as $\eps\to 0^+$, where 
\begin{equation}\label{eq:1}
C_k=\sum_{j=1}^k j |A_{j,k}|^2,\quad\text{being}\quad
A_{j,k}=\frac1\pi\int_0^{2\pi}(\cos\eta)^k\cos(j\eta)\,d\eta.
\end{equation}
\end{lem}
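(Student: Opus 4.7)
The plan is to solve the Dirichlet problem \eqref{eqPDECapD} explicitly using separation of variables in elliptic coordinates, exploiting the fact that the endpoints of $s_\eps$ are precisely the foci of $\mathcal{E}_\eps(L)$. Introduce $(\mu,\eta)\in[0,\infty)\times[0,2\pi)$ via $x_1=\eps\cosh\mu\cos\eta$, $x_2=\eps\sinh\mu\sin\eta$. Then $s_\eps$ corresponds to $\{\mu=0\}$ and $\partial\mathcal{E}_\eps(L)$ to $\{\mu=\mu_0\}$, where $\sinh\mu_0=L/\eps$; in particular
\[
\mu_0=\mathrm{arcsinh}(L/\eps)=|\log\eps|+\log(2L)+o(1)=|\log\eps|\Bigl(1+O\bigl(1/|\log\eps|\bigr)\Bigr)
\]
as $\eps\to0^+$. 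Because the change of variables is conformal, harmonicity of $V_{s_\eps,P_k}$ reduces to $(\partial_\mu^2+\partial_\eta^2)V=0$ on the cylinder $(0,\mu_0)\times(\mathbb{R}/2\pi\mathbb{Z})$, and the Dirichlet integral \eqref{eq:fcap2} becomes simply
\[
\mbox{\rm Cap}_{\mathcal{E}_\eps(L)}(s_\eps,P_k)=\int_0^{2\pi}\!\!\int_0^{\mu_0}\bigl[(\partial_\mu V)^2+(\partial_\eta V)^2\bigr]d\mu\,d\eta.
\]

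Next I would determine the boundary data. On $s_\eps$ we have $x_2=0$, so the only surviving monomial in \eqref{eq:14} is $c_0 x_1^k$, giving $P_k(\eps\cos\eta,0)=c_0\eps^k\cos^k\eta$. Writing the (finite) Fourier expansion
\[
\cos^k\eta=\tfrac{A_{0,k}}{2}+\sum_{j=1}^{k}A_{j,k}\cos(j\eta),
\]
solving mode by mode, and imposing the vanishing condition at $\mu=\mu_0$, one obtains
\[
V_{s_\eps,P_k}=c_0\eps^k\left[\tfrac{A_{0,k}}{2}\,\tfrac{\mu_0-\mu}{\mu_0}+\sum_{j=1}^{k}A_{j,k}\,\tfrac{\sinh(j(\mu_0-\mu))}{\sinh(j\mu_0)}\cos(j\eta)\right].
\]

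I would then compute the Dirichlet energy using orthogonality of the modes in $\eta$. The constant mode contributes $\pi c_0^2\eps^{2k}A_{0,k}^2/(2\mu_0)$. For $j\ge1$, using $\cosh^2+\sinh^2=\cosh(2\,\cdot\,)$ and $\int_0^{\mu_0}\cosh(2j(\mu_0-\mu))\,d\mu=\sinh(j\mu_0)\cosh(j\mu_0)/j$, each cosine mode contributes $\pi c_0^2\eps^{2k}A_{j,k}^2\,j\coth(j\mu_0)$. Since $\mu_0\to\infty$ as $\eps\to0^+$, we have $\coth(j\mu_0)=1+o(1)$ for every $j\ge1$.

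Finally, I would read off the two regimes. When $k\ge1$ the sum over $j\ge1$ gives the leading term $\pi\eps^{2k}c_0^2\sum_{j=1}^{k}jA_{j,k}^2=\pi\eps^{2k}c_0^2 C_k$, while the constant mode contributes only $O(\eps^{2k}/|\log\eps|)=o(\eps^{2k})$ and is absorbed. When $k=0$ there is only the $j=0$ mode (with $A_{0,0}=2$), yielding $2\pi c_0^2/\mu_0$, and the asymptotics for $\mu_0$ above yields the stated bound with remainder $O(1/|\log\eps|)$ inside the parenthesis. The only real obstacle is bookkeeping the Fourier coefficients and the conformal factor carefully; once elliptic coordinates are in place there are no genuinely analytical difficulties, and the finiteness of the Fourier series of $\cos^k\eta$ makes every step a finite sum.
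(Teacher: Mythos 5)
Your proposal is correct and follows essentially the same route as the paper's proof: elliptic coordinates with the foci at the endpoints of $s_\eps$, conformal invariance of the Dirichlet integral, the finite Fourier expansion of the boundary datum $c_0\eps^k\cos^k\eta$, explicit mode-by-mode solution, and Parseval. The only differences are cosmetic (you write the radial factors as $\sinh(j(\mu_0-\mu))/\sinh(j\mu_0)$ and use $\coth(j\mu_0)\to1$, where the paper uses exponentials and computes the two integrals separately), and your energy bookkeeping, including the $O(1/|\log\eps|)$ remainder for $k=0$, checks out.
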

\begin{rem}
  We notice that, if $k\geq 1$, then  there exists at least a
$j\in\{1,2,\dots,k\}$ such that $A_{j,k}\neq0$, so that $C_k=\sum_{j=1}^k
j |A_{j,k}|^2\neq0$ if $k\geq1$.
\end{rem}
\begin{rem}
  As a particular case of Lemma \ref{l:ellpse} when $k=0$ and $c_0=1$
  (so that $P_k\equiv 1$), we obtain that the condenser capacity of the
  segment in the ellipse is given 
  \begin{equation}\label{eq:11}
     \mbox{\rm Cap}_{\mathcal{E}_{\eps}(L)}(s_{\eps})
 =\frac{2\pi}{|\log\eps|}\,\left(1+O\left(\frac1{|\log\eps|}\right)\right)  \quad\text{as }\eps\to0^+.
  \end{equation}
\end{rem}

\begin{proof}[Proof of Lemma \ref{l:ellpse}]
We define the elliptic coordinates $(\xi,\eta)$ (see for instance \cite{Sokolov2011}) by
\[
\begin{cases}
x_1=\eps\cosh(\xi)\cos(\eta),\\
x_2=\eps\sinh(\xi)\sin(\eta),
\end{cases}
\quad \xi\geq0,\ 0\leq\eta<2\pi.
\]
Let us note that, in these coordinates, the segment $s_{\eps}$ is
defined by $\xi=0$, whereas $\mathcal E_{\eps}$ is defined by
 $0\leq\xi < \xi_{\eps}$ and $\partial \mathcal
  E_{\eps}$ is described by the condition $\xi=\xi_\eps$, with
$\eps\sinh(\xi_{\eps})=L\,$, that is to say
\begin{equation}
\label{eqBoundary}
\xi_{\eps}=\mbox{argsinh}\left(\frac{L}{\eps}\right)
=\log\left(\frac{L}{\eps}+\sqrt{1+\frac{L^2}{\eps^2}}\right).
\end{equation}
A direct computation shows that the mapping
$\Phi:(\xi,\eta)\mapsto (x_1,x_2)$ has a Jacobian matrix of the form 
\[
J(\Phi)(\xi,\eta)=h(\xi,\eta)O(\xi,\eta),
\]
with $O(\xi,\eta)$ an orthonormal matrix and $h(\xi,\eta)>0$ in
$\RR^2\setminus s_\eps$ satisfying
\[
h^2 (\xi,\eta)={\eps}^2(\cosh ^2 \xi-\cos ^2 \eta).
\]
If we evaluate the homogeneous polynomial $P_k$ in the new set of coordinates on the segment
$s_\eps=\{(\xi,\eta):\ \xi=0\}$, we end up with 
$Q_k(\xi,\eta)=P_k(\Phi(0,\eta))=c_0 {\eps}^k (\cos\eta)^k$.
Let $W$ be the Dirichlet potential of $P_k$ in elliptic coordinates, that is 
\[
 \begin{cases}
  -\Delta W =0, &\text{in }(0,\xi_\eps)\times (0,2\pi),\\
  W=0, &\text{on }\xi=\xi_\eps,\\
  W=c_0\,{\eps}^k(\cos\eta)^k, &\text{on } \xi=0,\\
  W(\xi,0)=W(\xi,2\pi), &\text{for all } \xi\in(0,\xi_\eps).
 \end{cases}
\]
Let us consider the Fourier expansion of $W$ in elliptic coordinates:
\[
 \frac{1}{\eps^k}W(\xi,\eta)= \dfrac{a_0(\xi)}{2} + \sum_{j\ge1} \left(a_j(\xi)\cos(j\eta) + b_j(\xi)\sin(j\eta)\right)
\]
where
\begin{equation*}
 a_j(\xi)= \frac1\pi \int_0^{2\pi} \frac{1}{\eps^k}W(\xi,\eta) \cos(j\eta) \,d\eta,\quad
 b_j(\xi)= \frac1\pi \int_0^{2\pi} \frac{1}{\eps^k}W(\xi,\eta) \sin(j\eta) \,d\eta,
\end{equation*}
from which we see that $b_j(0)=0$ for any $j$ and 
$a_j(0)=0$ for all $j>k$.
Therefore we have
\[
 0=-\Delta_{(\xi,\eta)}W= \eps^k \dfrac{a_0''(\xi)}{2} 
 + \eps^k \sum_{j\ge1} \left((a_j''(\xi)-j^2a_j(\xi))\cos(j\eta) + (b_j''(\xi)-j^2b_j(\xi))\sin(j\eta)\right)
\]
and imposing the boundary conditions for $\xi\in(0,\xi_\eps)$ we obtain 
\begin{align}
 &a_0(\xi) = a_0(0) \left(1-\dfrac{\xi}{\xi_\eps}\right), \label{eq:a0}\\
 &a_j(\xi) = a_j(0) \left(\dfrac{e^{j\xi}}{1-e^{2j\xi_\eps}}+\dfrac{e^{-j\xi}}{1-e^{-2j\xi_\eps}}\right), \qquad \text{for }j\ge1 \label{eq:aj}\\
 &b_j(\xi)=0\qquad \text{for }j\ge0 \label{eq:bj}.
\end{align}
In this way 
\[
\dfrac1{\eps^k} W(\xi,\eta)= \dfrac{a_0(\xi)}{2} + \sum_{j=1}^k a_j(\xi)\cos(j\eta)
\]
and then by Parseval's  identity 
\begin{equation}\label{eq:parseval}
 \iint_{(0,\xi_\eps)\times(0,2\pi)} |\nabla W|^2 
 = \eps^{2k}  \frac\pi2 \int_{0}^{\xi_\eps} |a_0'(\xi)|^2 \,d\xi
 + \eps^{2k} \pi \sum_{j=1}^k \int_{0}^{\xi_\eps} (|a_j'(\xi)|^2 + j^2 |a_j(\xi)|^2)\,d\xi.
\end{equation}
Let us now compute every term of the latter expression. First we have
\begin{equation}\label{eq:inta0}
 \int_{0}^{\xi_\eps} |a_0'(\xi)|^2\,d\xi = \dfrac{1}{\xi_\eps}|a_0(0)|^2.
\end{equation}
Secondly, for $j\ge1$ we have
\begin{align}\label{eq:intaj'}
 \int_{0}^{\xi_\eps} |a_j'(\xi)|^2 \,d\xi
 \notag&= j^2 |a_j(0)|^2\int_{0}^{\xi_\eps} \left(\dfrac{e^{j\xi}}{1-e^{2j\xi_\eps}}-\dfrac{e^{-j\xi}}{1-e^{-2j\xi_\eps}}\right)^{\!\!2}\,d\xi\\
 \notag&= \frac{j}{2}|a_j(0)|^2 \left(\dfrac{-1}{1-e^{2j\xi_\eps}}+\dfrac{1}{1-e^{-2j\xi_\eps}} - 4j\dfrac{\xi_\eps}{(1-e^{2j\xi_\eps})(1-e^{-2j\xi_\eps})}\right)\\
 \notag&= \frac{j}{2}|a_j(0)|^2 \left( \dfrac{e^{-2j\xi_\eps}-e^{2j\xi_\eps} -4j\xi_\eps}{(1-e^{2j\xi_\eps})(1-e^{-2j\xi_\eps})} \right)\\
 &= \frac{j}{2}|a_j(0)|^2 (1+o(1)) \qquad \text{as }\eps\to0.
\end{align}
Finally we have
\begin{align}\label{eq:intaj}
 \int_{0}^{\xi_\eps} |a_j(\xi)|^2\,d\xi
 \notag&= |a_j(0)|^2\int_{0}^{\xi_\eps}\left(\dfrac{e^{j\xi}}{1-e^{2j\xi_\eps}}+\dfrac{e^{-j\xi}}{1-e^{-2j\xi_\eps}}\right)^{\!\!2}\,d\xi\\
 \notag&= |a_j(0)|^2 \dfrac{1}{2j} \left(\dfrac{-1}{1-e^{2j\xi_\eps}}+\dfrac{1}{1-e^{-2j\xi_\eps}} +4j\dfrac{\xi_\eps}{(1-e^{2j\xi_\eps})(1-e^{-2j\xi_\eps})}\right)\\
 &= |a_j(0)|^2 \dfrac{1}{2j}  (1+o(1)) \qquad \text{as }\eps\to0.
 \end{align}
Plugging \eqref{eq:inta0}, \eqref{eq:intaj'} and \eqref{eq:intaj} into \eqref{eq:parseval} we obtain
\begin{equation}\label{eq:dircap1}
 \iint_{(0,\xi_\eps)\times(0,2\pi)} |\nabla W|^2 
 = \eps^{2k}  \frac\pi2 \dfrac{1}{\xi_\eps}|a_0(0)|^2 
 + \eps^{2k} \pi \sum_{j=1}^kj |a_j(0)|^2  (1+o(1))  \qquad \text{as }\eps\to0.
\end{equation}
We note that for $k=0$ there holds $a_0(0)=2c_0$, whereas $a_j(0)=0$ for $j\ge1$.
Moreover, a simple calculation shows 
\[
\frac1{\xi_\eps}=\frac1{|\log\eps|}+O\left(\frac1{|\log\eps|^2}\right)
\]
as $\eps\to0^+$.
On the other hand, if $k\geq1$, then there exists at least a
$j\in\{1,2,\dots,k\}$ such that $a_j(0)\neq0$.

We then conclude that
\[
  \mbox{\rm Cap}_{\mathcal{E}_{\eps}(L)}(s_{\eps},P_k) =
\begin{cases}
   \frac{2\pi}{|\log\eps|} c_0^2\,\left(1+O\left(\frac1{|\log\eps|}\right)\right), &\text{if }k=0,\\[4pt]
    \eps^{2k}\, \pi \left(\sum_{j=1}^kj |a_j(0)|^2\right)  (1+o(1)), &\text{if }k\ge1,
  \end{cases}
\]
thus completing the proof.
\end{proof}

\subsubsection{Condenser capacity in dimension 2} 

We first consider
generic compact connected sets and prove the sharp asymptotic expansion
of the condenser capacity in terms of their diameter, as stated in Proposition \ref{lemCapSmall}.

\begin{proof}[Proof of Proposition \ref{lemCapSmall}]
Let $a_{\eps}, b_{\eps}\in K_{\eps}$ be such that $|b_{\eps}-a_{\eps}|=\delta_{\eps}$. 
We denote by $m_{\eps}$ the middle point of $a_{\eps}$ and $b_{\eps}$,
i.e. $m_{\eps}=\frac12(a_{\eps}+b_{\eps})$. 
Note that $m_{\eps}\to x_0$ as $\eps\to 0$.

Let us first derive an upper bound for
$\mbox{\rm Cap}_{\Omega}(K_{\eps})$.  There exists $R>0$ such that
$B(m_\eps,R)\subset \Omega$ and $B(x_0,R)\subset \Omega$ for $\eps$
sufficiently small.  According to the monotonicity properties of the
capacity, we have
\[
\mbox{\rm Cap}_{\Omega}(K_{\eps})\le \mbox{\rm
  Cap}_{B(m_\eps,R)}\overline B(m_\eps,\delta_\eps) =\mbox{\rm
  Cap}_{B(0,R)}\overline B(0,\delta_\eps).
\]
It is easy to compute $\mbox{\rm Cap}_{B(0,R)}\overline B(0,\delta_\eps)$. 
Indeed, the
radial function $V$ defined as $V(x)=f(|x|)$ with 
\[
f(r)=
\begin{cases}
1,&\mbox{if } r \le  \delta_\eps,\\
\frac{\log(r/R)}{\log(\delta_\eps/R)}, &\mbox{if }\delta_\eps  <r\le R,
\end{cases}
\]
belongs to $H^1_0(B(0,R))$, is harmonic in $B(0,R)\setminus
B(0,\delta_\eps)$ and equal to $1$ on $B(0,\delta_\eps)$. Hence $V$ is a capacitary potential and 
\begin{equation*}
\mbox{\rm Cap}_{B(0,R)}\overline
B(0,\delta_\eps)=\int_{B(0,R)}\left|\nabla V\right|^2\,dx=
2\pi\int_{\delta_\eps}^{R}\frac{dr}{r\log^2(\delta_\eps/R)}
		=\frac{2\pi}{\log(R/\delta_\eps)}.
              \end{equation*}
We therefore have
	\begin{equation}\label{eq:up}
          \mbox{\rm Cap}_{\Omega}(K_{\eps})\le
\frac{2\pi}{\log(R/\delta_\eps)}.
\end{equation}
To find a lower bound for $\mbox{\rm Cap}_{\Omega}(K_{\eps})$ is a
more delicate issue.  Since $\Omega$ is bounded, there exists a length
$L$ such that $\Omega \subset \widetilde{\mathcal{E}}_{\eps}$, where
$\widetilde{\mathcal{E}}_{\eps}$ is  the interior of the ellipse centered at $m_{\eps}$, whose
major semi-axis has length $\sqrt{L^2+\frac14\delta_\eps^2}$ and belongs to the straight
line $\mathcal D_\eps$ passing through $a_{\eps}$ and $b_{\eps}$, and
whose minor semi-axis has length $L$.  By monotonicity of the
capacity,
\[
\mbox{\rm Cap}_{\widetilde{\mathcal{E}}_{\eps}}(K_{\eps})\le \mbox{\rm
  Cap}_{\Omega}(K_{\eps}).
\]
We now claim that, if ${\widetilde s}_{\eps}$ denotes the segment of
        extremities $a_{\eps}$ and $b_{\eps}$,
\begin{equation}\label{eq:10}
\mbox{\rm Cap}_{\widetilde{\mathcal{E}}_{\eps}}({\widetilde s}_{\eps})\le \mbox{\rm
  Cap}_{\widetilde{\mathcal{E}}_{\eps}}(K_{\eps}).
\end{equation}
To prove claim \eqref{eq:10}, we first consider a regular connected
compact set $\widetilde K_\eps$ such that $K_\eps\subseteq \widetilde
K_\eps\subset \widetilde{\mathcal E}_\eps$. Since $\widetilde K_\eps$ is regular, we have that 
its capacitary potential $V_{\widetilde K_\eps}$ is continuous in
$\widetilde {\mathcal E}_\eps$. For every $x\in \RR^2$, let us denote as $S_x$ the straight line perpendicular to
$\mathcal D_\eps$ passing through
$x$. 
Let us consider the  Steiner symmetrization of
$V_{\widetilde K_\eps}$  with respect to the line $\mathcal D_{\eps}$
(see e.g. \cite{capriani}),
i.e.
\[
V^*_{\widetilde K_\eps}(x)=\inf\left\{t>0:\mathcal H^1(\{y\in
  S_x:V_{\widetilde K_\eps}(y)>t\})\leq 2 \mathop{\rm dist}(x,\mathcal
  D_\eps)\right\},
\]
where $\mathcal H^1$  is the $1$-dimensional Hausdorff measure.

Since $\widetilde K_\eps$ is connected  and
$a_\eps,b_\eps\in \widetilde K_\eps$, we have that $S_x\cap
\widetilde K_\eps \neq\emptyset$ for every $x\in
\widetilde s_\eps$. It follows that, for every $x\in \widetilde s_\eps$,
$\sup_{S_x\cap \widetilde {\mathcal{E}}_{\eps}}V_{\widetilde K_\eps}=1$; then $\mathcal H^1(\{y\in
  S_x:V_{\widetilde K_\eps}(y)>t\})=0$ if and only if $t\geq1$. It
  follows that $V^*_{\widetilde K_\eps}(x)=1$ for every $x\in \widetilde s_\eps$.
Then 
\[
\mbox{\rm Cap}_{\widetilde {\mathcal{E}}_{\eps}}(\widetilde s_{\eps})\le \int_{\widetilde{\mathcal
  E}_\eps}|\nabla V^*_{\widetilde K_\eps}|^2\,dx.
\]
 Since Steiner
symmetrization decreases the Dirichlet energy, we obtain also that 
\[
\int_{\widetilde {\mathcal
  E}_\eps}|\nabla V^*_{\widetilde{K_\eps}}|^2\,dx\leq \int_{\widetilde {\mathcal
  E}_\eps}|\nabla V_{\widetilde {K_\eps}}|^2\,dx=\mbox{\rm Cap}_{\widetilde {\mathcal{E}}_{\eps}}(\widetilde K_{\eps})
\]
thus concluding that
$\mbox{\rm Cap}_{\widetilde {\mathcal{E}}_{\eps}}(\widetilde s_{\eps})\le \mbox{\rm
  Cap}_{\widetilde {\mathcal{E}}_{\eps}}(\widetilde
K_{\eps})$. Finally, to obtain \eqref{eq:10} it is enough to
approximate $K_\eps$ by regular connected compact sets and invoke
Remark \ref{r:capcon} (ii).

Since a  roto-translation  transforms $\widetilde
{{\mathcal{E}}_{\eps}}$ into $\mathcal E_{\delta_\eps/2}$ and
$\widetilde s_\eps$ into $s_{\delta_\eps/2}$ (see the notations
introduced in \eqref{eq:12} and \eqref{eq:13}), from 
 \eqref{eq:11} it follows that 
\[
\mbox{\rm Cap}_{\widetilde {\mathcal{E}}_{\eps}}(\widetilde s_{\eps})=
\mbox{\rm Cap}_{\mathcal E_{\delta_\eps/2}}(s_{\delta_\eps/2})
 =\frac{2\pi}{|\log\delta_\eps|}\,\left(1+O\left(\frac1{|\log\delta_\eps|}\right)\right)  \quad\text{as }\eps\to0^+.
\]
Putting the above inequalities and computations together, we get
\begin{equation}\label{eq:down}
  \mbox{\rm Cap}_{\Omega}(K_{\eps})\ge
  \frac{2\pi}{|\log\delta_\eps|}\,\left(1+O\left(\frac1{|\log\delta_\eps|}\right)\right)   \quad\text{as }\eps\to0^+.
\end{equation}
Putting together Equations \eqref{eq:up} and \eqref{eq:down}, we obtain
\begin{equation}\label{eq:updown}
 \frac{2\pi}{|\log\delta_\eps|}\,\left(1+O\left(\frac1{|\log\delta_\eps|}\right)\right) 
  \le \mbox{\rm Cap}_{\Omega}(K_{\eps}) \le \frac{2\pi}{\log(R/\delta_\eps)}.
\end{equation}
Observing that
\[
\frac{2\pi}{\log(R/\delta_\eps)}=\frac{2\pi}{|\log
  \delta_\eps|}\left(1+O\left(\tfrac{1}{|\log
  \delta_\eps|}\right)\right)
\]
as $\delta_\eps\to 0^+$, we conclude the proof. 
\end{proof}

\subsubsection{$u$-capacity for segments concentrating to a point in dimension 2}

We now compute the $u$-capacity for the special shape of segments of length $2\eps$ centered at $0$; for
this particular shape we are able to consider even the case when the
limit point is a zero of the eigenfunction $u$. The interest in computing
the $u$-capacity of segments with coalescing extremities is motivated by
the remarkable application to eigenvalue asymptotics for Aharonov-Bohm
operators with two poles 
presented in  section \ref{sec:asympt-expans-coal}.

The following proposition gives the asymptotics for $h$-capacity of
concentrating segments in a planar domain when $h$ is a homogeneous polynomial.
\begin{prop}
	\label{p:DirCapSmall_segments}
Let $\Omega\subset\RR^2$ be a bounded connected open set 
  with $0\in\Omega$.	For $\eps>0$ small, let $s_\eps$ be as in
  \eqref{eq:13} and $P_k$ be a homogeneous polynomial of degree
  $k\ge0$ as in \eqref{eq:14} 
 for some $c_0,c_1,\dots,c_k\in\RR$.
	Then
	\begin{equation}
  \mbox{\rm Cap}_{\Omega}(s_{\eps},P_k)
 =\begin{cases}
   \frac{2\pi}{|\log\eps|}\, c_0^2\,\big(1+O\big(\frac1{|\log\eps|}\big)\big), &\text{if }k=0,\\[4pt]
    \eps^{2k}\,c_0^2\, \pi C_k (1+o(1)), &\text{if }k\ge1,
  \end{cases}
	\end{equation}
as $\eps\to 0^+$, with $C_k$ as in \eqref{eq:1}.
\end{prop}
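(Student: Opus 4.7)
The plan is to reduce the statement to Lemma \ref{l:ellpse} by a squeeze argument based on the monotonicity of the $P_k$-capacity with respect to the ambient domain. Since $0$ is an interior point of the bounded open set $\Omega$, there exist radii $0<r<R$ such that $B(0,r)\subset\Omega\subset B(0,R)$. Recalling from \eqref{eq:12} that the ellipse $\mathcal{E}_\eps(L)$ has minor semi-axis $L$ and major semi-axis $\sqrt{L^2+\eps^2}$, so that
\[
B(0,L)\subset\mathcal{E}_\eps(L)\subset B\bigl(0,\sqrt{L^2+\eps^2}\bigr),
\]
I can fix two parameters $L_1,L_2$ with $0<L_1<r$ and $L_2>R$ so that, for every $\eps$ small enough,
\[
\mathcal{E}_\eps(L_1)\subset\Omega\subset\mathcal{E}_\eps(L_2).
\]

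Next I would verify the monotonicity of the $P_k$-capacity under domain inclusion. Choose a cutoff $\eta\in C^\infty_{\rm c}(B(0,L_1))$ with $\eta\equiv 1$ in a neighborhood of $0$, so that $\eta$ is admissible in the definition of the $P_k$-capacity in each of the three domains $\mathcal{E}_\eps(L_1)$, $\Omega$, $\mathcal{E}_\eps(L_2)$, and $\mbox{\rm Cap}_{\bullet}(s_\eps,P_k)=\mbox{\rm Cap}_{\bullet}(s_\eps,\eta P_k)$. If $\Omega_1\subset\Omega_2$ are two of these domains, then extending by zero any admissible competitor $f\in H^1_0(\Omega_1)$ with $f-\eta P_k\in H^1_0(\Omega_1\setminus s_\eps)$ yields a valid competitor in $\Omega_2$ (since $\eta P_k$ is compactly supported in $\Omega_1$), which gives
\[
\mbox{\rm Cap}_{\mathcal{E}_\eps(L_2)}(s_\eps,P_k)\;\le\;\mbox{\rm Cap}_{\Omega}(s_\eps,P_k)\;\le\;\mbox{\rm Cap}_{\mathcal{E}_\eps(L_1)}(s_\eps,P_k).
\]

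Finally, I apply Lemma \ref{l:ellpse} to both the upper and the lower bound. The crucial feature is that the leading asymptotic in Lemma \ref{l:ellpse} does not depend on the parameter $L$: both ends of the above sandwich are equal to $\frac{2\pi}{|\log\eps|}c_0^2(1+O(1/|\log\eps|))$ when $k=0$ and to $\eps^{2k}c_0^2\pi C_k(1+o(1))$ when $k\ge 1$. Squeezing concludes the proof.

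The main (and only) conceptual point is the monotonicity step, which is not entirely automatic for the $u$-capacity because the reference function $\eta P_k$ must live in $H^1_0$ of each domain; this is easily dealt with by fixing once and for all a cutoff supported well inside $\Omega$. All remaining pieces are direct applications of Lemma \ref{l:ellpse}, so no extra calculation is needed.
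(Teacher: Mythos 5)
Your proposal is correct and follows essentially the same route as the paper: sandwiching $\Omega$ between the two ellipses $\mathcal{E}_\eps(L_1)\subset\Omega\subset\mathcal{E}_\eps(L_2)$, invoking monotonicity of the $P_k$-capacity in the ambient domain, and applying Lemma \ref{l:ellpse} to both bounds, whose leading terms are independent of $L$. The only difference is that you spell out the extension-by-zero justification of the monotonicity step (with the cutoff fixed inside the smaller domain), which the paper simply cites as ``monotonicity of the capacity''.
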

\begin{proof}
  Since $\Omega$ is open and bounded, there exist $L_2>L_1>0$ such
  that, for $\eps$ sufficiently small,
  $s_\eps\subset \mathcal{E}_{\eps}(L_1)\subset \Omega \subset
  \mathcal{E}_{\eps}(L_2)$, with
  $\mathcal{E}_{\eps}(L_1),\mathcal{E}_{\eps}(L_2)$ being as in \eqref{eq:12}.  By monotonicity of the capacity,
\[
\mbox{\rm Cap}_{\mathcal{E}_{\eps}(L_2)}(s_{\eps},P_k)\le \mbox{\rm
  Cap}_{\Omega}(s_{\eps},P_k)\le \mbox{\rm
  Cap}_{\mathcal{E}_{\eps}(L_1)}(s_{\eps},P_k).
\]
The conclusion then follows from Lemma \ref{l:ellpse}.
\end{proof}

The following proposition gives, for every sufficiently smooth
function $u$, a sharp relation between the asymptotics of $\mbox{\rm
  Cap}_{\Omega}(s_\eps,u)$ 
and the order of vanishing of
$u$ at $0\in\Omega$. 

\begin{prop}  \label{p:n}
Let $\Omega\subset\RR^2$ be an open, bounded, connected set 
with $0\in\Omega$ and  let $k\in
\NN\cup\{0\}$. Let us assume that
$u\in C^{k+1}_{\rm loc}(\Omega)\setminus\{0\}$ has vanishing order at $0$ equal to
$k$, i.e. the Taylor polynomial of $u$ of order $k$ and center
  $0$ has degree $k$ and  is non-zero and $k$-homogeneous, namely is
  of the form 
\[
P_k(x_1,x_2)=\sum_{j=0}^{k}
 c_j {x_1}^{k-j}{x_2}^j
\]
 for some $c_0,c_1,\dots,c_k\in\RR$,
 $(c_0,c_1,\dots,c_k)\neq(0,0,\dots,0)$.  
\begin{enumerate}[\rm (i)]
\item If $c_0\neq0$, then
\begin{equation}\label{eq:2}
  \mbox{\rm Cap}_{\Omega}(s_{\eps},u)
 =
\begin{cases}
   \frac{2\pi}{|\log\eps|}\, u ^2 (0)\,\big(1+o(1)\big), &\text{if }k=0,\\[4pt]
    \eps^{2k}\,c_0^2\, \pi C_k (1+o(1)), &\text{if }k\ge1,
  \end{cases}
	\end{equation}
as $\eps\to 0^+$,  $C_k$ being  defined in \eqref{eq:1}.
\item If $c_0=0$, then  $\mbox{\rm Cap}_{\Omega}\left(s_\eps,u\right)=
O\left(\eps^{2k+2}\right)$ as $\eps\to0^+$.
\end{enumerate}
\end{prop}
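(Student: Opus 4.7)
The plan is to decompose $u$ via Taylor's theorem and reduce the claim to the two results already proved in this subsection. Since $u\in C^{k+1}_{\rm loc}(\Omega)$ has vanishing order exactly $k$ at $0$, its Taylor polynomial of order $k$ at $0$ coincides with the homogeneous polynomial $P_k$ of the statement. Setting $R:=u-P_k$, Taylor's theorem gives
\[
R(x)=O(|x|^{k+1})\quad\text{and}\quad |\nabla R(x)|=O(|x|^k)\quad\text{as }|x|\to 0.
\]

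I would then exploit linearity of the boundary value problem \eqref{eqPDECapD} in its datum. Fixing once and for all a cutoff $\eta\in C^\infty_c(\Omega)$ equal to $1$ on a neighborhood of $0$ (hence on $s_\eps$ for all sufficiently small $\eps$), uniqueness for \eqref{eqPDECapD} yields the superposition identity $V_{s_\eps,\eta u}=V_{s_\eps,\eta P_k}+V_{s_\eps,\eta R}$, and the triangle inequality for the $L^2$-norm of the gradient produces
\[
\bigl|\sqrt{\mbox{\rm Cap}_\Omega(s_\eps,u)}-\sqrt{\mbox{\rm Cap}_\Omega(s_\eps,P_k)}\bigr|\leq \sqrt{\mbox{\rm Cap}_\Omega(s_\eps,R)}.
\]
Proposition \ref{p:DirCapSmall_segments} supplies the asymptotics of the $P_k$-capacity on the left, while Lemma \ref{lem:remaining} applied with $n=2$ to the remainder $R$ gives $\mbox{\rm Cap}_\Omega(s_\eps,R)=O(\eps^{2k+2})$.

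In case (i), where $c_0\ne 0$, the leading quantity $\sqrt{\mbox{\rm Cap}_\Omega(s_\eps,P_k)}$ has order $\eps^k$ (if $k\geq 1$) or $|\log\eps|^{-1/2}$ (if $k=0$), both strictly dominating the remainder $O(\eps^{k+1})$; squaring the displayed inequality then produces \eqref{eq:2}. In case (ii), where $c_0=0$, the trace of $P_k$ on $s_\eps$ vanishes identically, since $P_k(x_1,0)=c_0x_1^k\equiv 0$. Hence $\eta P_k\in H^1_0(\Omega\setminus s_\eps)$, and by uniqueness in \eqref{eqPDECapD} one has $V_{s_\eps,\eta P_k}\equiv 0$ and therefore $\mbox{\rm Cap}_\Omega(s_\eps,P_k)=0$. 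The triangle inequality then reduces to $\mbox{\rm Cap}_\Omega(s_\eps,u)\leq \mbox{\rm Cap}_\Omega(s_\eps,R)=O(\eps^{2k+2})$, which is the conclusion in (ii). The most delicate point is justifying the linearity step despite $u$ and $P_k$ not belonging to $H^1_0(\Omega)$: this is handled by using the \emph{same} cutoff $\eta$ for all three summands, so that $\mbox{\rm Cap}_\Omega(s_\eps,\cdot)=\mbox{\rm Cap}_\Omega(s_\eps,\eta\,\cdot)$ by definition and the superposition principle applies without change.
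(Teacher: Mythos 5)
Your proposal is correct and follows essentially the same route as the paper: decompose $u=P_k+h$ by Taylor's theorem, use linearity of the capacitary potential, control the remainder via Lemma \ref{lem:remaining}, and conclude with Proposition \ref{p:DirCapSmall_segments}. The only (equivalent) cosmetic differences are that you use the reverse triangle inequality on $\|\nabla\cdot\|_{L^2}$ where the paper expands the square and applies Cauchy--Schwarz to the cross term, and in case (ii) you observe directly that $P_k$ vanishes on $s_\eps$ so its capacitary potential is zero, which is in fact a cleaner justification than the paper's appeal to Proposition \ref{p:DirCapSmall_segments}.
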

\begin{proof}
From  the  Taylor formula, we can write $u$ as
$u=P_k+h$ for some $h\in  C^{k+1}_{\rm loc}(\Omega)$ satisfying 
\[
h(x)=O(|x|^{k+1})\quad\text{and}\quad
|\nabla h(x)|=O(|x|^{k})\quad\text{as }|x|\to0^+.
\]
We denote by $V$, $W_0$, and $W$ the capacitary potentials associated
with the capacities $\mbox{\rm Cap}_{\Omega}\left(s_\eps,u\right)$,
$\mbox{\rm Cap}_{\Omega}\left(s_\eps,P_k\right)$, and
$\mbox{\rm Cap}_{\Omega}\left(s_\eps,h\right)$ respectively. By
linearity of the Dirichlet problem, $V=W_0+W$.  Therefore we have that
	\begin{equation*}
		\mbox{\rm Cap}_{\Omega}\left(s_\eps,u\right)=\int_{\Omega}\left|\nabla V\right|^2\,dx
		=\int_{\Omega}\left|\nabla W_0\right|^2\,dx+2\int_{\Omega}\nabla W_0\cdot \nabla W\,dx
		+\int_{\Omega}\left|\nabla W\right|^2\,dx.
	\end{equation*}
By Lemma \ref{lem:remaining} we have that, as $\eps\to0^+$,
\begin{equation*}
  \int_{\Omega}\left|\nabla W\right|^2\,dx=O\left(\eps^{2k+2}\right)
\end{equation*}
and
\begin{equation*}
  \left|\int_{\Omega}\nabla W_0\cdot \nabla W\,dx\right|\le \left\|\nabla W_0\right\|_{L^{2}(\Omega)}\left\|\nabla W\right\|_{L^{2}(\Omega)}=\sqrt{\mbox{\rm Cap}_{\Omega}\left(s_\eps,P_k\right)}\,O\left(\eps^{k+1}\right).
\end{equation*}
Hence 
\begin{equation}\label{eq:3}
\mbox{\rm Cap}_{\Omega}\left(s_\eps,u\right)=\mbox{\rm Cap}_{\Omega}\left(s_\eps,P_k\right)
+
\sqrt{\mbox{\rm Cap}_{\Omega}\left(s_\eps,P_k\right)}\,O\left(\eps^{k+1}\right)+
O\left(\eps^{2k+2}\right),
\end{equation}
 as $\eps\to0^+$.
In view of Proposition \ref{p:DirCapSmall_segments} and \eqref{eq:3}, we have that, if
$c_0\neq0$, 
\[
\mbox{\rm Cap}_{\Omega}\left(s_\eps,u\right)=
\mbox{\rm Cap}_{\Omega}\left(s_\eps,P_k\right)(1+o(1))
\]
 as $\eps\to0^+$, from which estimate \eqref{eq:2} follows thanks to Proposition
\ref{p:DirCapSmall_segments}. 

On the other hand, if $c_0=0$, then Proposition \ref{p:DirCapSmall_segments}
implies that $\mbox{\rm Cap}_{\Omega}\left(s_\eps,P_k\right)=0$, hence
from \eqref{eq:3} it follows that $\mbox{\rm Cap}_{\Omega}\left(s_\eps,u\right)=
O\left(\eps^{2k+2}\right)$ as $\eps\to0^+$.  
\end{proof}

The proof of  Theorem \ref{t:capphiNintro}  (and consequently of
Theorem \ref{t:asynull}) now follows as a particular
case of Proposition \ref{p:n}.
\begin{proof}[Proof of Theorems \ref{t:capphiNintro} and \ref{t:asynull}]
  From the fact that $u\in C^\infty(\Omega)$ and
  \eqref{eq:asyphi0} it follows that the Taylor polynomial of the 
  function $u$ with center
  $0$ and order $k$ is harmonic,  $k$-homogeneous, and has degree $k$; more precisely
  it has the form
\[
P_k(r\cos t,r\sin t)=\beta r^k \sin(\alpha-kt).
\]
Since it can be also written as          $P_k(x_1,x_2)=\sum_{j=0}^{k}
 c_j {x_1}^{k-j}{x_2}^j$ for some $c_0,c_1,\dots,c_k\in\RR$, we have
 then that necessarily $c_0=\beta\sin\alpha$.
The proof of Theorem \ref{t:capphiNintro} then follows from
Proposition \ref{p:n}.
Finally, the proof of Theorem \ref{t:asynull} is a direct consequence
of Theorems \ref{propL1intro} and \ref{t:capphiNintro}.
\end{proof}

\subsubsection{$u$-capacity for small  disks   concentrating to a point in dimension 2}

We conclude this section with a proof of Theorem
\ref{t:capDiskphiNintro}. As in the proof of Theorem
\ref{t:capphiNintro}, we rely on explicit computation of the
$u$-capacity in a special case. The following result is the
counterpart, in the case of the disks, of Lemma \ref{l:ellpse}, which
was stated for segments.

\begin{lem}\label{l:disks}
  Let $k\in \NN$, $k\ge1$, and let $P_k$ be a homogeneous polynomial
  of degree $k$. Let us define the Fourier coefficients
\begin{equation*}
  a_{j,k}=\frac1{\pi}\int_0^{2\pi}P_k(\cos t, \sin t)\,\cos(jt)\,dt \quad\text{for}\quad j\in\{0,1\dots,k\}
\end{equation*}
and
\begin{equation*}
  b_{j,k}=\frac1{\pi}\int_0^{2\pi}P_k(\cos t, \sin t)\,\sin(jt)\,dt \quad\text{for}\quad j\in\{1\dots,k\}.
\end{equation*}
Then, for every $R>0$,
\begin{equation*}
  \mbox{\rm Cap}_{B(0,R)}(B_{\eps},P_k)=\pi\,D(P_k)\,\eps^{2k}(1+o(1))
\end{equation*}
as $\eps\to 0^+$, where $B_{\eps}=\overline B(0,\eps)$ and $D(P_k)$ is
a constant depending only on the coefficients of the polynomial $P_k$
given by
\begin{equation}\label{eq:18}
  D(P_k)=\frac{k\,a_{0,k}^2}{4}+\sum_{j=1}^k\frac{(k+j)^2}{2k}(a_{j,k}^2+b_{j,k}^2).
\end{equation}
\end{lem}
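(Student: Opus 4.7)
The plan is to mimic the strategy of Lemma \ref{l:ellpse}, but in polar coordinates $(r,\theta)$ adapted to the disk geometry, and to take explicit care of the fact that $B_\eps$ has non-empty interior (so the capacitary potential actually equals $P_k$, not just a boundary trace, on $B_\eps$). Writing $V=V_{B_\eps,P_k}$ for the capacitary potential, I would split
\[
\mbox{\rm Cap}_{B(0,R)}(B_\eps,P_k)=\int_{B_\eps}|\nabla P_k|^2\,dx+\int_{B(0,R)\setminus B_\eps}|\nabla V|^2\,dx,
\]
since $V=P_k$ on $B_\eps$ and $V$ is harmonic on the annulus, vanishing on $\partial B(0,R)$.

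First I would compute the interior contribution by the scaling $x=\eps y$: since $P_k$ is $k$-homogeneous, $\int_{B_\eps}|\nabla P_k|^2\,dx=\eps^{2k}\int_{B_1}|\nabla P_k|^2\,dy$. Writing $P_k(r\cos\theta,r\sin\theta)=r^k p(\theta)$ with
\[
p(\theta)=\tfrac{a_{0,k}}{2}+\sum_{j=1}^k\bigl(a_{j,k}\cos(j\theta)+b_{j,k}\sin(j\theta)\bigr),
\]
and using $|\nabla P_k|^2=k^2r^{2k-2}p(\theta)^2+r^{2k-2}p'(\theta)^2$ together with Parseval, one obtains
\[
\int_{B_1}|\nabla P_k|^2\,dy=\frac{1}{2k}\int_0^{2\pi}\bigl(k^2p^2+(p')^2\bigr)d\theta=\frac{\pi k\,a_{0,k}^2}{4}+\frac{\pi}{2k}\sum_{j=1}^k(k^2+j^2)(a_{j,k}^2+b_{j,k}^2).
\]

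Next I would compute the annular contribution by solving the Dirichlet problem explicitly. Expanding
\[
V(r,\theta)=\tfrac{A_0(r)}{2}+\sum_{j\geq 1}\bigl(A_j(r)\cos(j\theta)+B_j(r)\sin(j\theta)\bigr),
\]
harmonicity yields $A_0(r)=c+c'\log r$ and $A_j(r),B_j(r)$ of the form $c_1 r^j+c_2 r^{-j}$; the boundary conditions $V=\eps^k p(\theta)$ on $r=\eps$ and $V=0$ on $r=R$ determine the coefficients. In particular one finds $A_j(r)=\tfrac{\eps^{j+k}a_{j,k}}{\eps^{2j}-R^{2j}}(r^j-R^{2j}r^{-j})$ (and analogously for $B_j$), while $A_0(r)=\eps^k a_{0,k}\tfrac{\log(R/r)}{\log(R/\eps)}$. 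Then Parseval in $\theta$ gives
\[
\int_{B(0,R)\setminus B_\eps}|\nabla V|^2\,dx=\pi\int_\eps^R\!\!\left[\tfrac{|A_0'|^2}{2}r+\sum_{j=1}^k\Bigl(|A_j'|^2+|B_j'|^2+\tfrac{j^2}{r^2}(A_j^2+B_j^2)\Bigr)r\right]dr.
\]
The $A_0$ term contributes $O(\eps^{2k}/|\log\eps|)=o(\eps^{2k})$, while a routine integration of the mixed $r^{\pm j}$ terms shows that each $j\geq 1$ mode contributes $\pi j\,\eps^{2k}(a_{j,k}^2+b_{j,k}^2)(1+o(1))$ as $\eps\to 0^+$.

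Adding interior and annular contributions and using the identity
\[
\frac{k^2+j^2}{2k}+j=\frac{(k+j)^2}{2k},
\]
one arrives at $\pi\,D(P_k)\,\eps^{2k}(1+o(1))$ with $D(P_k)$ as in \eqref{eq:18}. The only step requiring some care is checking the asymptotics of the $A_j$-integrals for $j\geq 1$: the factor $c_1^2$ behaves like $\eps^{2(j+k)}/R^{4j}$, while the definite integral contributes $R^{4j}\eps^{-2j}(1+o(1))$, and the two powers conspire to produce precisely $\eps^{2k}$ — this cancellation is the main computational point to verify carefully, but it is otherwise a direct calculation analogous to \eqref{eq:intaj'}--\eqref{eq:intaj} in the proof of Lemma \ref{l:ellpse}.
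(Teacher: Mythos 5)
Your proposal is correct and follows essentially the same route as the paper's proof: split the energy into the interior of $B_\eps$ (evaluated by homogeneity/Parseval) plus the annulus (where the harmonic extension is computed mode by mode in polar Fourier series), and combine via the identity $\frac{k^2+j^2}{2k}+j=\frac{(k+j)^2}{2k}$. The only cosmetic difference is that the paper evaluates the annular mode integrals by an integration by parts reducing them to the boundary term $-\eps\,a_j(\eps)a_j'(\eps)$, whereas you integrate the $r^{\pm j}$ terms directly; both give $j\,\eps^{2k}(a_{j,k}^2+b_{j,k}^2)(1+o(1))$.
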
	

\begin{proof}
  Let us denote by $V$ the Dirichlet potential of $P_k$ and by $W$ its
  expression in polar coordinates, that is to say
  $V(r \cos t,r \sin t)=W(r,t)$ for
  $(r,t)\in (0,R)\times (0,2\,\pi)$.  By definition of the Fourier
  coefficients,
\begin{equation}
  P_k(r \cos t,r\sin t)=r^k\frac{a_{0,k}}2+\sum_{j=1}^kr^k\,(a_{j,k}\cos(jt)+b_{j,k}\sin(jt)) 
\end{equation}
for all $(r,t)\in (0,+\infty)\times (0,2\,\pi)$.  For all $x\in B_{\eps}$,
$V(x)=P_k(x)$, and therefore, using polar coordinates,
\begin{multline*}
  \int_{B(0,\eps)}\left|\nabla
    V(x)\right|^2\,dx=\int_0^{\eps}r^{2k-2}\bigg[\int_0^{2\pi}k^2\,\bigg(\frac{a_{0,k}}2+\sum_{j=1}^ka_{j,k}\cos(jt)+b_{j,k}\sin(jt)\bigg)^{\!\!2}\\+\bigg(\sum_{j=1}^kj\,b_{j,k}\cos(jt)-j\,a_{j,k}\sin(jt)\bigg)^{\!\!2}\bigg]\,r\,dr.
\end{multline*}
	By Parseval's identity, we obtain
\begin{equation}\label{eq:EnInner}
  \int_{B(0,\eps)}\left|\nabla V(x)\right|^2\,dx=\frac{\eps^{2k}}{2k}\left(\frac{k^2\,\pi\,a_{0,k}^2}2+\sum_{j=1}^k\pi\, (k^2+j^2)\,(a_{j,k}^2+b_{j,k}^2)\right).
\end{equation}
Let us now determine $V$ in the open set $B(0,R)\setminus B_{\eps}$, that is to say $W(r,t)$ for $r\in (\eps,R)$. The function $W$ satisfy the boundary value problem
\begin{equation}\label{eq:polar}
\begin{cases}
  \frac1r \frac\partial{\partial r}\left(r\frac\partial{\partial r}W\right)+\frac1{r^2}\frac{\partial^2}{\partial t^2}W =0, &\text{in }(\eps,R)\times (0,2\,\pi),\\
  W(R,t)=0, &\text{for all }t\in(0,2\,\pi),\\
  W(\eps,t)=P_k(\eps\,\cos t,\eps\,\sin t), &\text{for all }t\in(0,2\,\pi),\\
  W(r,0)=W(r,2\pi), &\text{for all } r\in(\eps,R).
\end{cases}
\end{equation}
To solve problem \eqref{eq:polar}, we expand $W$ in Fourier series
with respect to the variable $t$:
\begin{equation*}
  W(r,t)=\frac{a_0(r)}2+\sum_{j\ge 1} a_j(r)\cos(jt)+b_j(r)\sin(jt),
\end{equation*}
for $(r,t)\in(\eps,R)\times (0,2\,\pi)$. Then 
\begin{multline*}
  \left(\frac1r \frac\partial{\partial r}\left(r\frac\partial{\partial
        r}W\right)+\frac1{r^2}\frac{\partial^2}{\partial
      t^2}W\right)(r,t)=
  \frac1{2r}\left(r\,a_0'(r)\right)'\\+\sum_{j=1}^k
  \left(\frac1r\left(r\,a_j'(r)\right)'-\frac{j^2}{r^2}a_j(r)\right)\,\cos(jt)+\left(\frac1r\left(r\,b_j'(r)\right)'-\frac{j^2}{r^2}b_j(r)\right)\,\sin(jt),
\end{multline*}
so that 
\begin{equation*}
  (r\,a_0'(r))'=0 \quad\text{in }  (\eps,R),
\end{equation*}
and, for $j\ge1$,
\begin{equation*}
  r\,(r\,a_j'(r))'-j^2a_j(r)=0 \quad\text{and}\quad r\,(r\,b_j'(r))'-j^2b_j(r)=0.
\end{equation*}
Taking into account the boundary conditions in  \eqref{eq:polar}, we find
\begin{equation*}
  a_0(r)=a_{0,k}\,\eps^k\frac{\log\left(\frac{r}R\right)}{\log\left(\frac{\eps}R\right)},
\end{equation*}
and, for $j\in\{1,\dots,k\}$,
\begin{equation*}
  a_j(r)=a_{j,k}\eps^k\frac{\left(\frac{R}{r}\right)^j-\left(\frac{r}{R}\right)^j}{\left(\frac{R}{\eps}\right)^j-\left(\frac{\eps}{R}\right)^j}\quad\text{and}\quad b_j(r)=b_{j,k}\eps^k\frac{\left(\frac{R}{r}\right)^j-\left(\frac{r}{R}\right)^j}{\left(\frac{R}{\eps}\right)^j-\left(\frac{\eps}{R}\right)^j},
\end{equation*}
while,	 for $j\ge k+1$, $a_j(r)=0$ and $b_j(r)=0$. Using polar coordinates and Parseval's identity as above, we find
\begin{multline*}
  \int_{B(0,R)\setminus B_{\eps}}\left|\nabla
    V(x)\right|^2\,dx\\=\int_{\eps}^{R}\bigg(\frac{\pi}{2}|a_0'(r)|^2+\pi\sum_{j=1}^k
\left(    |a_j'(r)|^2+\frac{j^2}{r^2}|a_j(r)|^2+|b_j'(r)|^2+\frac{j^2}{r^2}|b_j(r)|^2\right)\bigg)\,r\,dr.
\end{multline*}
We have 
\begin{equation*}
  \int_{\eps}^R\frac{\pi}{2}|a_0'(r)|^2\,r\,dr=\frac{\pi\,a_{0,k}^2\,\eps^{2k}}{2\log^2\left(\frac{R}{\eps}\right)}\int_{\eps}^R\frac{dr}r=\frac{\pi\,a_{0,k}^2\,\eps^{2k}}{2\log\left(\frac{R}{\eps}\right)}.
\end{equation*}
For $j\in\{1,\dots,k\}$, an integration by parts gives us
\begin{align*}
  \int_{\eps}^R
  \left(r\,|a_j'(r)|^2+\frac{j^2}{r}|a_j(r)|^2\right)\,dr&=\left[r\,a_j(r)\,a_j'(r)\right]_{\eps}^R-
  \int_{\eps}^R
  \left(\left(r\,a_j'(r)\right)'-\frac{j^2}{r}a_j(r)\right)\,a_j(r)\,dr\\
&=-\eps\,a_j(\eps)\,a_j'(\eps).
\end{align*}
Using the expression computed for $a_j(r)$, we find
\begin{equation*}
  \int_{\eps}^R \left(r\,|a_j'(r)|^2+\frac{j^2}{r}|a_j(r)|^2\right)\,dr=j\,a_{j,k}^2\,\eps^{2k}\frac{1+\left(\frac{\eps}{R}\right)^{2j}}{1-\left(\frac{\eps}{R}\right)^{2j}}.
\end{equation*}
The exact same computation gives us
\begin{equation*}
\int_{\eps}^R \left(r\,|b_j'(r)|^2+\frac{j^2}{r}|b_j(r)|^2\right)\,dr=j\,b_{j,k}^2\,\eps^{2k}\frac{1+\left(\frac{\eps}{R}\right)^{2j}}{1-\left(\frac{\eps}{R}\right)^{2j}},
\end{equation*}
so that
\begin{equation}
	\label{eq:EnOuter}
	\int_{B(0,R)\setminus B_{\eps}}\left|\nabla V(x)\right|^2\,dx=\frac{\pi\,a_{0,k}^2\,\eps^{2k}}{2\log\left(\frac{R}{\eps}\right)}+\pi\,\eps^{2k}\,\sum_{j=1}^kj\,(a_{j,k}^2+b_{j,k}^2)\,\frac{1+\left(\frac{\eps}{R}\right)^{2j}}{1-\left(\frac{\eps}{R}\right)^{2j}}.
\end{equation}
Combining \eqref{eq:EnInner} and \eqref{eq:EnOuter}, we get
\begin{equation*}
  \mbox{\rm Cap}_{B(0,R)}(B_{\eps},P_k)=\pi\,\eps^{2k}\,\left(\frac{k\,a_{0,k}^2}4+\sum_{j=1}^k\left(\frac{k^2+j^2}{2k}+j\right)\,(a_{j,k}^2+b_{j,k}^2)+O\left(\frac1{\left|\log(\eps)\right|}\right)\right),
\end{equation*}
and finally
\begin{equation*}
	\mbox{\rm
          Cap}_{B(0,R)}(B_{\eps},P_k)=\pi\,\left(k\frac{a_{0,k}^2}{4}+\sum_{j=1}^k\frac{(k+j)^2}{2k}(a_{j,k}^2+b_{j,k}^2)\right)\,\eps^{2k}\,(1+o(1)),
\end{equation*}
as $\eps\to0^+$.
\end{proof}
\begin{rem}
  Since the polynomial $P_k$ in Lemma \ref{l:disks} is of degree
  $k\geq1$, it is non zero, and therefore $D(P_k)>0$.
\end{rem}

We can now find the asymptotics of the $P_k$-capacity for small
balls concentrating at a point in any open set. This is the analogue
of Proposition \ref{p:DirCapSmall_segments} for segments.

\begin{prop}\label{p:DirCapSmall_disks}
Let $\Omega\subset\RR^2$ be a bounded connected open set 
	with $0\in\Omega$.	For $\eps>0$ small, let $B_{\eps}=\overline{B}(0,\eps)$ and $P_k$ be a homogeneous polynomial of degree
	$k\ge0$.
	Then
\begin{equation}\label{eq:asymDisksHarmo}
	\mbox{\rm Cap}_{\Omega}(B_{\eps},P_k)=
        \begin{cases}
\frac{2\pi c_0^2}{|\log\eps|}\big(1+O\big(\frac1{|\log\eps|}\big)\big)
,&\text{if }k=0\text{ and }P_k\equiv c_0,\\ 
\pi\,D(P_k)\,\eps^{2k}(1+o(1)),&\text{if }k\geq1,          
        \end{cases}
\end{equation}
as $\eps\to 0^+$, where $D(P_k)$ is the constant defined in
\eqref{eq:18}.
\end{prop}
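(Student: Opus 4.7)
The plan is to mirror the strategy used for segments in Proposition~\ref{p:DirCapSmall_segments}: sandwich the quantity $\mbox{\rm Cap}_{\Omega}(B_{\eps},P_k)$ between its values on two concentric disks that frame $\Omega$ from inside and outside, and then apply the explicit asymptotic computed in Lemma~\ref{l:disks} (plus an elementary radial calculation for the constant case).

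First, since $\Omega$ is open, bounded and contains $0$, I would choose $R_1,R_2>0$ with $B(0,R_1)\subset\Omega\subset B(0,R_2)$, so that for $\eps$ sufficiently small one also has $B_\eps\subset B(0,R_1)$. By monotonicity of the $u$-capacity in the ambient domain (test functions for the smaller domain extend by zero to become admissible for the larger one, with $\eta_K$ chosen supported in the smaller domain), this gives
\[
\mbox{\rm Cap}_{B(0,R_2)}(B_\eps,P_k)\,\le\,\mbox{\rm Cap}_{\Omega}(B_\eps,P_k)\,\le\,\mbox{\rm Cap}_{B(0,R_1)}(B_\eps,P_k).
\]

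For $k\ge 1$ the conclusion would then follow immediately from Lemma~\ref{l:disks} applied to both ends, since the leading coefficient $\pi D(P_k)\eps^{2k}$ does not depend on the radius $R$ of the disk; the $o(1)$ errors coming from $R_1$ and $R_2$ close the sandwich.

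For $k=0$ the polynomial is a constant $c_0$, so by linearity of the Dirichlet problem in the boundary data one has $\mbox{\rm Cap}_{B(0,R)}(B_\eps,c_0)=c_0^2\,\mbox{\rm Cap}_{B(0,R)}(B_\eps)$, and the radial capacitary potential computed exactly as in the proof of Proposition~\ref{lemCapSmall} yields $\mbox{\rm Cap}_{B(0,R)}(B_\eps)=2\pi/\log(R/\eps)$. Expanding $\log(R/\eps)=|\log\eps|+\log R$ gives $\frac{2\pi c_0^2}{|\log\eps|}\bigl(1+O(1/|\log\eps|)\bigr)$ for either $R=R_1$ or $R=R_2$, closing the sandwich. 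No serious obstacle is expected: the whole argument is a routine combination of the domain monotonicity of capacity with the already established asymptotic on disks, exactly parallel to the treatment of segments.
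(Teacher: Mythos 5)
Your proposal is correct and follows essentially the same route as the paper: sandwich $\mbox{\rm Cap}_{\Omega}(B_\eps,P_k)$ between the capacities in two concentric disks $B(0,R_1)\subset\Omega\subset B(0,R_2)$ and apply Lemma~\ref{l:disks} for $k\ge1$, noting that the leading term is independent of $R$. The only (immaterial) difference is in the case $k=0$, where the paper reduces to $c_0^2\,\mbox{\rm Cap}_\Omega(B_\eps)$ and cites Proposition~\ref{lemCapSmall}, while you redo the explicit radial computation inside the same sandwich; both give the stated $O(1/|\log\eps|)$ error.
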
 

\begin{proof}
If $k=0$, i.e. if $P_k\equiv c_0$, then $\mbox{\rm
  Cap}_{\Omega}(B_{\eps},P_k)=c_0^2 \mbox{\rm Cap}_{\Omega}B_{\eps}$
and the conclusion follows from Proposition \ref{lemCapSmall}.

For $k\geq1$, let us fix two radii $0<R_1<R_2$ such that $B(0,R_1)\subset \Omega \subset B(0,R_2)$. By monotonicity of the capacity we have
\begin{equation*}
  \mbox{\rm Cap}_{B(0,R_2)}(B_{\eps},P_k)\le \mbox{\rm
    Cap}_{\Omega}(B_{\eps},P_k)\le \mbox{\rm
    Cap}_{B(0,R_1)}(s_{\eps},P_k).
\end{equation*}
	We apply Lemma \ref{l:disks} to $\mbox{\rm Cap}_{B(0,R_1)}(B_{\eps},P_k)$ and $\mbox{\rm Cap}_{B(0,R_1)}(B_{\eps},P_k)$ and obtain \eqref{eq:asymDisksHarmo}.
\end{proof}

The following proposition is the analogue for disks of Proposition
\ref{p:n} for segments.

\begin{prop}  \label{p:n_disks}
Let $\Omega\subset\RR^2$ be an open, bounded, connected set 
with $0\in\Omega$ and  let $k\in
\NN\cup\{0\}$. Let us assume that
$u\in C^{k+1}_{\rm loc}(\Omega) \setminus\{0\}$ has vanishing order at $0$ equal to
$k$, i.e. the Taylor polynomial of $u$ of order $k$ and center
  $0$ has degree $k$ and  is non-zero and $k$-homogeneous.  Then
\begin{equation*}
  \mbox{\rm Cap}_{\Omega}(B_{\eps},u)
 =
\begin{cases}
   \frac{2\pi}{|\log\eps|}\, u ^2 (0)\,\big(1+o(1)\big), &\text{if }k=0,\\[4pt]
   \pi\,D(P_k)\,\eps^{2k}(1+o(1)), &\text{if }k\ge1,
  \end{cases}
	\end{equation*}
as $\eps\to 0^+$,  $D(P_k)$ being  defined in \eqref{eq:18}.

\end{prop}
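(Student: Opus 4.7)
The plan is to mirror exactly the strategy used for the segment case in Proposition \ref{p:n}, replacing the role of the key lemma on segments (Proposition \ref{p:DirCapSmall_segments}) by its disk counterpart (Proposition \ref{p:DirCapSmall_disks}). First I would Taylor-expand $u$ around $0$ at order $k$, writing $u=P_k+h$ where $P_k$ is the homogeneous Taylor polynomial of degree $k$ at $0$ and $h\in C^{k+1}_{\rm loc}(\Omega)$ satisfies $h(x)=O(|x|^{k+1})$ and $|\nabla h(x)|=O(|x|^k)$ as $|x|\to0^+$. Since $u\in C^{k+1}_{\rm loc}(\Omega)$ has vanishing order exactly $k$ at $0$, the polynomial $P_k$ is nonzero and $k$-homogeneous.

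Next, by linearity of the Dirichlet problem \eqref{eqPDECapD}, the capacitary potentials decompose as $V_{B_\eps,u}=V_{B_\eps,P_k}+V_{B_\eps,h}$. Expanding the square in \eqref{eq:fcap2} gives
\begin{equation*}
\mbox{\rm Cap}_{\Omega}(B_\eps,u)=\mbox{\rm Cap}_{\Omega}(B_\eps,P_k)+2\int_{\Omega}\nabla V_{B_\eps,P_k}\cdot\nabla V_{B_\eps,h}\,dx+\mbox{\rm Cap}_{\Omega}(B_\eps,h).
\end{equation*}
By Lemma \ref{lem:remaining} applied in dimension $n=2$ (with $K_\eps=B_\eps\subset \overline{B}(0,\eps)$), the last term is $O(\eps^{2k+2})$ as $\eps\to0^+$. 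Cauchy--Schwarz then controls the cross term by $\sqrt{\mbox{\rm Cap}_{\Omega}(B_\eps,P_k)}\cdot O(\eps^{k+1})$.

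Finally I would invoke Proposition \ref{p:DirCapSmall_disks}. If $k=0$, then $P_k\equiv u(0)$ and $\mbox{\rm Cap}_{\Omega}(B_\eps,P_k)=\frac{2\pi u^2(0)}{|\log\eps|}(1+o(1))$, while both remainder and cross terms are $O(\eps^2)$ and $o(1/|\log\eps|)$ respectively, yielding the first case. If $k\ge1$, then $\mbox{\rm Cap}_{\Omega}(B_\eps,P_k)=\pi D(P_k)\eps^{2k}(1+o(1))$, the cross term is $O(\eps^{2k+1})$, the remainder is $O(\eps^{2k+2})$, and since $D(P_k)>0$ (as noted in the remark after Lemma \ref{l:disks}), both are absorbed into the $o(\eps^{2k})$ error.

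The routine part is the algebraic bookkeeping to check that the cross and remainder contributions are strictly of lower order than the leading term in each regime; there is no real obstacle, as every estimate needed has already been set up in Lemma \ref{lem:remaining} and Proposition \ref{p:DirCapSmall_disks}. Note that, unlike the segment case of Proposition \ref{p:n}, no alternative subcase analogous to $c_0=0$ arises here: since $P_k\not\equiv 0$ on every disk $B_\eps$, the leading coefficient $D(P_k)$ is automatically strictly positive, which is why the statement has no $O(\eps^{2k+2})$ degenerate branch.
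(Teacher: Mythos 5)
Your proof is correct and follows exactly the route the paper takes: the paper's own proof of Proposition \ref{p:n_disks} simply says to repeat the argument of Proposition \ref{p:n} with Proposition \ref{p:DirCapSmall_disks} in place of Proposition \ref{p:DirCapSmall_segments}, which is precisely the decomposition $u=P_k+h$, the application of Lemma \ref{lem:remaining}, and the Cauchy--Schwarz control of the cross term that you spell out. Your closing observation that no degenerate branch analogous to $c_0=0$ occurs (because $D(P_k)>0$ whenever $P_k\not\equiv 0$) is also correct and matches the remark following Lemma \ref{l:disks}.
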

\begin{proof}
The proof follows by repeating the same arguments as in Proposition
\ref{p:n} and using Proposition \ref{p:DirCapSmall_disks} instead of
Proposition \ref{p:DirCapSmall_segments}.
\end{proof}

\begin{proof}[Proof of Theorems \ref{t:capDiskphiNintro} and \ref{t:asynullDisk}]
Arguing as in the proof of Theorem
  \ref{t:capphiNintro},  
  from the fact that $u\in C^\infty(\Omega)$ and
  \eqref{eq:asyphi0} we deduce that the Taylor polynomial of the 
  function $u$ with center
  $0$ and order $k$ is harmonic,  $k$-homogeneous, and has degree $k$; more precisely
  it has the form \[
P_k(r\cos t,r\sin t)=\beta r^k
  \sin(\alpha-kt).
\]
 Then, for $k\geq1$, 
 the Fourier coefficients $a_{j,k}$ and $b_{j,k}$ appearing in Lemma
 \ref{l:disks} 
are zero for $j\neq k$ and 
\[
a_{k,k}=\begin{cases}
  2\beta\sin\alpha,&\text{if }k=0,\\
\beta\sin\alpha,&\text{if }k\geq1,
\end{cases}\quad\text{and}\quad 
b_{k,k}=-\beta\cos\alpha.
\]
From \eqref{eq:18} it follows that, for $k\geq1$,
$D(P_k)=2k\beta^2$. Then the asymptotics stated in \eqref{eq:capDisks}
follows form Proposition \ref{p:n_disks}. 

The proof of Theorem \ref{t:asynullDisk} follows directly from
Theorems \ref{propL1intro} and \ref{t:capDiskphiNintro}.
\end{proof}

\section{Asymptotic expansion for coalescing poles of Aharonov-Bohm
  operators}\label{sec:asympt-expans-coal}

In this section we study Aharonov-Bohm operators on domains having one axis of symmetry. 
More specifically, let us define the reflection $\sigma:\RR^2\to \RR^2$ by 
\begin{equation*}\sigma(x_1,x_2)=(x_1,-x_2),\end{equation*} and let us consider $\Omega$, 
an open, bounded, and connected set in $\RR^2$ satisfying
$\sigma(\Omega)=\Omega$.  Let us consider a Schr\"odinger operator
with a purely magnetic potential of Aharonov-Bohm type, with two poles
on the axis of symmetry
\begin{equation*}
  \mathcal \axis:=\{(x_1,x_2)\in\RR^2\,:\,x_2=0\},
\end{equation*} 
each with a half-integer flux.

More precisely, let us fix two points $a^-=(a^-_1,0)$ and
$a^+=(a^+_1,0)$ in $\axis$, with $a^-_1<a^+_1$. We consider the vector
field $\pot$ defined on the doubly punctured plane
$\dpp(a^-,a^+):=\RR^2\setminus\{a^-,a^+\}$ by
\begin{equation*}
  \pot(x):= 
  -\frac12\frac{1}{(x_1-a^-_1)^2+x_2^2}(-x_2,x_1-a^-_1)+\frac12\frac{1}{(x_1-a^+_1)^2+x_2^2}(-x_2,x_1-a^+_1).
\end{equation*}
Let us note that, if we write, for any $x=(x_1,x_2)\in \dpp(a^-,a^+)$, 
\begin{equation*}
A_{a^-,a^+}(x_1,x_2)=(A_1(x_1,x_2),A_2(x_1,x_2)),
\end{equation*}
we have
\begin{equation*}
	A_{a^-,a^+}(x_1,-x_2)=(-A_1(x_1,x_2),A_2(x_1,x_2)).
\end{equation*}
Equivalently, we have, for any $x\in \dpp(a^-,a^+)$,
\begin{equation*}
	A_{a^-,a^+}(\sigma(x))=-A_{a^-,a^+}(x)S
\end{equation*}
where $S$ is a $2\times2$ symmetry matrix:
\begin{equation*}
	S:=\left(\begin{array}{cc}
		1&0\\
		0&-1\\
	\end{array}
	\right).
\end{equation*}
We work in the complex Hilbert space $L^2(\Omega)$ of complex valued square integrable functions on $\Omega$, with the scalar product defined by
\begin{equation*}
	\langle u,v\rangle :=\int_{\Omega}u\overline{v}\,dx
\end{equation*}
for $u$ and $v$ in $L^2(\Omega)$.  Our operator is the Friedrichs
extension of the differential
operator \begin{equation*}(i\nabla+\pot)^2,\end{equation*} acting on
$C^{\infty}_c(\dpOm(a^-,a^+))$, the space of smooth functions with compact
support in the doubly punctured domain
$\dpOm(a^-,a^+):=\Omega \setminus \{a^-,a^+\}$. We denote it by
$H_{A_{a^-,a^+}}$. It is a positive and self-adjoint operator, with
compact resolvent. Its spectrum therefore consists in a sequence of
real positive eigenvalues tending to $+\infty$, which we denote by
$(\lambda_k(a^-,a^+))_{k\ge 1}$.

\subsection{Gauge transformations}

We now construct suitable gauge transformations, in order to remove the magnetic potential.  
We use the notation
\begin{equation*}
	\ic(a^-,a^+):=[a^-_1,a^+_1]\times \{0\}
\end{equation*}
to denote the closed segment joining the two poles.
\begin{lem}\label{lemAngle}
  There exists a unique $C^{\infty}$-function $\varphi_{a^-,a^+}$
  defined on $\RR^2\setminus\ic(a^-,a^+)$ such that
	\begin{equation*}
	\nabla \varphi_{a^-,a^+}=\pot\ \mbox{ on } \RR^2\setminus\ic(a^-,a^+)
	\end{equation*}	
and
	\begin{equation*}
	\varphi_{a^-,a^+}(x_1,0)=0 \mbox{ for all }  x_1\in (a^+_1,+\infty).
	\end{equation*}
	Furthermore, $\varphi_{a^-,a^+}$ satisfies $\varphi_{a^-,a^+}\circ \sigma =-\varphi_{a^-,a^+}$.
\end{lem}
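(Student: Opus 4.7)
The plan is to construct $\varphi_{a^-,a^+}$ as a primitive of the $1$-form associated with $\pot$ on the domain $\RR^2\setminus\ic(a^-,a^+)$. This set is open, connected (it is the complement of a segment in the plane), and has the homotopy type of $S^1$, with $\pi_1$ generated by any simple loop encircling $\ic(a^-,a^+)$ once. The crucial observation is that $\pot$ is closed on $\RR^2\setminus\{a^-,a^+\}$ (direct computation, or the fact that each summand $\pm A_{a^{\pm}}$ is locally the gradient of $\pm\tfrac12$ times the polar angle based at $a^{\pm}$), and its period along a generator of $\pi_1(\RR^2\setminus\ic(a^-,a^+))$ vanishes. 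Indeed, for any loop $\gamma$ avoiding $a^\pm$, one has $\int_\gamma A_{a^\pm}\cdot d\ell=\pi\,n(\gamma,a^\pm)$, and a generator has winding number $1$ around both poles, so the period equals $-\pi+\pi=0$. This is the key non-trivial step, relying on the cancellation of the two half-integer fluxes; it is the reason the primitive exists on $\RR^2\setminus\ic(a^-,a^+)$ (i.e.\ with the segment removed) rather than merely on the complement of two disjoint half-lines.

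Given this, I would fix a base point $x_0=(b,0)$ with $b>a_1^+$ and define
\[
\varphi_{a^-,a^+}(x):=\int_\gamma \pot\cdot d\ell
\]
along any $C^1$ path $\gamma$ from $x_0$ to $x$ in $\RR^2\setminus\ic(a^-,a^+)$; by the period computation this is path-independent. Smoothness follows from $\pot\in C^\infty(\RR^2\setminus\{a^-,a^+\})\supset C^\infty(\RR^2\setminus\ic(a^-,a^+))$, and by construction $\nabla\varphi_{a^-,a^+}=\pot$. On the ray $\{(x_1,0):x_1>a_1^+\}$ the first component of $\pot$ vanishes (since the numerator of $A_1$ contains $x_2$), so $\varphi_{a^-,a^+}$ is constant along the ray, equal to $0$ by the choice at $x_0$. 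Uniqueness is immediate: any other $\tilde\varphi$ with the stated properties satisfies $\nabla(\tilde\varphi-\varphi_{a^-,a^+})=0$ on the connected open set $\RR^2\setminus\ic(a^-,a^+)$, and the normalization along the ray fixes the additive constant to $0$.

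For the symmetry, set $\psi:=-\varphi_{a^-,a^+}\circ\sigma$ on $\RR^2\setminus\ic(a^-,a^+)$ (which is $\sigma$-invariant). Using the chain rule together with the identity $\pot\circ\sigma=-\pot\,S$ already established in the excerpt, a short computation gives $\partial_{x_1}\psi=A_1$ and $\partial_{x_2}\psi=A_2$, so $\nabla\psi=\pot=\nabla\varphi_{a^-,a^+}$. Hence $\psi-\varphi_{a^-,a^+}$ is constant; evaluating at any point of the ray $\{(x_1,0):x_1>a_1^+\}$, which is pointwise fixed by $\sigma$ and on which both functions vanish, shows that this constant is $0$, i.e.\ $\varphi_{a^-,a^+}\circ\sigma=-\varphi_{a^-,a^+}$.
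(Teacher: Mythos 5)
Your proposal is correct, and it proves existence by a genuinely different route than the paper. You invoke the standard monodromy argument: $\pot$ is a closed $1$-form on $\RR^2\setminus\{a^-,a^+\}$, the domain $\RR^2\setminus\ic(a^-,a^+)$ has fundamental group generated by a single loop winding once around both poles, and the period of $\pot=-A_{a^-}+A_{a^+}$ along that generator is $-\pi+\pi=0$, so the path integral from a base point is well defined; the normalization on the ray, uniqueness, and the symmetry via $\pot\circ\sigma=-\pot\,S$ are all handled correctly. The paper instead writes down the primitive explicitly, $\varphi_{a^-,a^+}=\tfrac12\theta_0(\cdot-a^+)-\tfrac12\theta_0(\cdot-a^-)$ with $\theta_0$ the polar angle cut along the negative $x_1$-axis, first on $\RR^2\setminus((-\infty,a_1^+]\times\{0\})$, and then checks that the one-sided limits across $(-\infty,a_1^-)\times\{0\}$ agree (both equal $0$) so the function extends smoothly there, while the limits across the open segment $(a_1^-,a_1^+)\times\{0\}$ are $\pm\pi/2$ — which is why only $\ic(a^-,a^+)$ must be excised. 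Your argument is shorter and makes the topological reason for existence (cancellation of the two half-fluxes) transparent; the paper's explicit construction has the advantage that the boundary values $\pm\pi/2$ on the segment come out as a byproduct, and these are exactly what is used in the next lemma to show that the gauge $\psi_{a^-,a^+}=e^{2i\varphi_{a^-,a^+}}$ equals $-1$ on $(a_1^-,a_1^+)\times\{0\}$ and extends continuously across the segment. With your construction you would recover those limits by one further period computation (integrating $\pot$ along a small arc crossing the segment), so nothing is lost, but be aware that the lemma's downstream use relies on that extra piece of information. One trivial slip: the inclusion of smooth function spaces should be read as restriction from the larger domain to the smaller one, i.e. $\pot\big|_{\RR^2\setminus\ic(a^-,a^+)}$ is smooth; this does not affect the argument.
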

\begin{proof} First, we define $\theta_0:\RR^2\setminus\Rm\to \RR$, where $\Rm=(-\infty,0]\times\{0\}$, by 
\begin{equation*}
	\theta_0(x_1,x_2):= 2\arctan\left(\frac{x_2}{x_1+\sqrt{x_1^2+x_2^2}}\right).
\end{equation*}
As seen on Figure \ref{figPolarAngle}, $\theta_0(x)$ is the angular coordinate of the point $x$.
\begin{figure}
\begin{center}
\includegraphics[width=7cm]{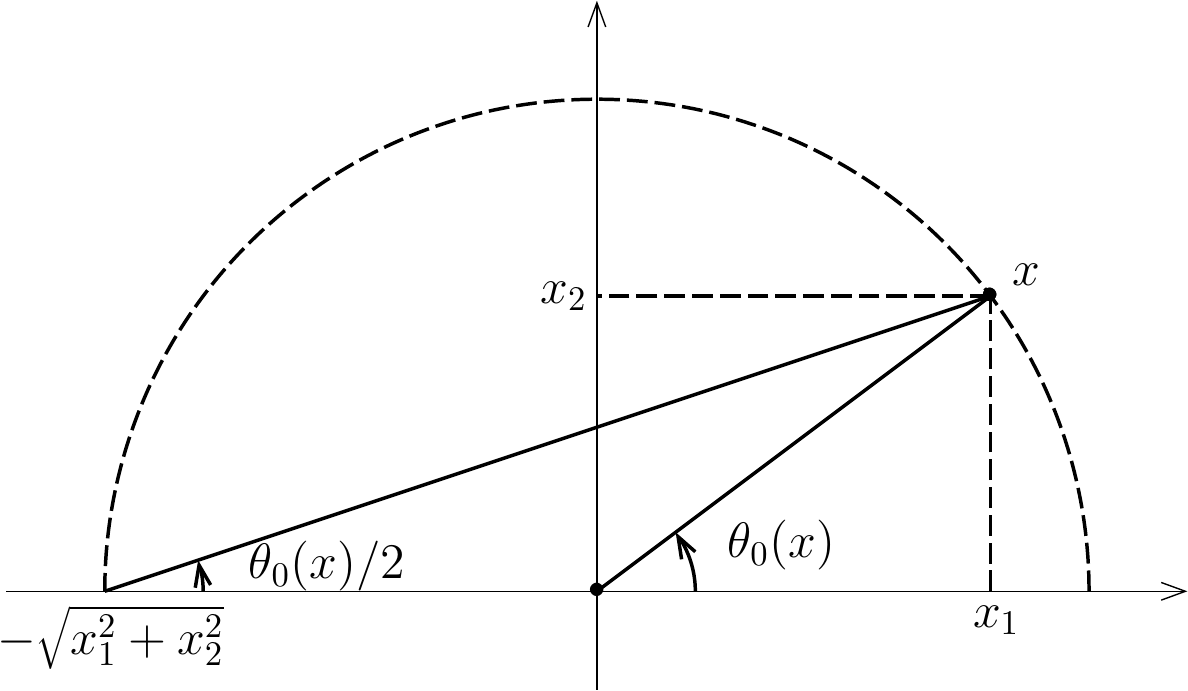}
\caption{Definition of $\theta_0(x)$.\label{figPolarAngle}}
\end{center}
\end{figure}
 The function $\theta_0$ is smooth on $\RR^2\setminus\Rm$,
 and
\begin{equation}\label{eqGradTheta}	
\nabla\theta_0(x_1,x_2)=\frac1{x_1^2+x_2^2}(-x_2,x_1)
\end{equation} 
for all $x=(x_1,x_2)\in \RR^2\setminus\Rm$. Let us note also that $\theta_0\circ\sigma =-\theta_0$.

Then we define, for any $x=(x_1,x_2)\in \RR^2\setminus ((-\infty,a^+_1]\times\{0\})$,
\begin{equation*}
  \varphi_{a^-,a^+}(x):=\frac12\theta_0(x_1-a^+_1,x_2)-\frac12\theta_0(x_1-a^-_1,x_2).
\end{equation*}
See Figure \ref{figPotential} for a geometric interpretation.
\begin{figure}
\begin{center}
\includegraphics[width=7cm]{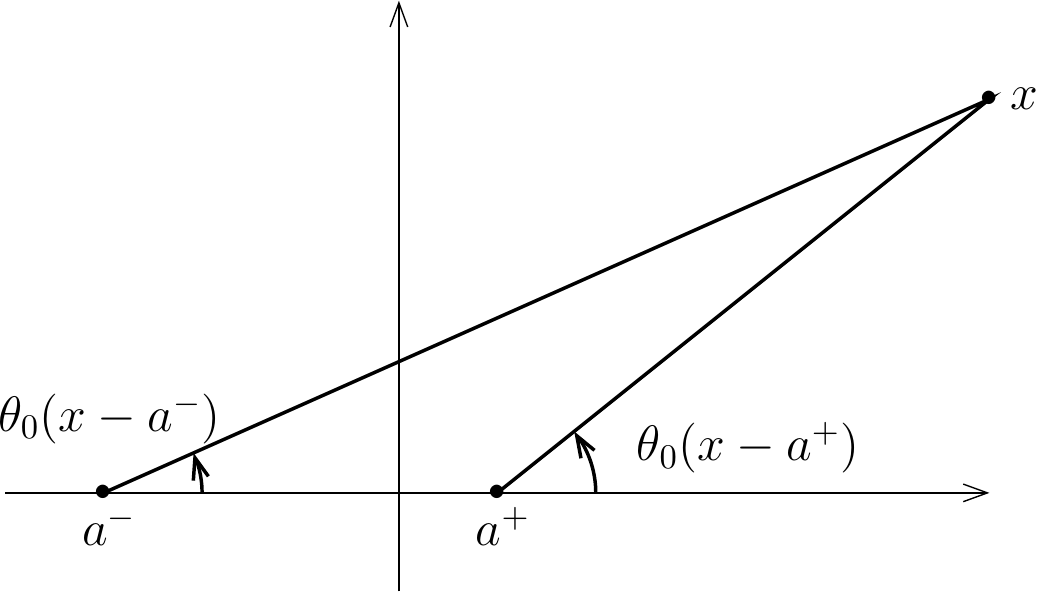}
\caption{Geometric interpretation of the function $\varphi$.}\label{figPotential}
\end{center}
\end{figure}
In particular, if $x_1>a^+_1$, 
\begin{equation*}\varphi_{a^-,a^+}(x_1,0)=\frac12\theta_0(x_1-a^+_1,0)-\frac12\theta_0(x_1-a^-_1,0)=0-0=0.\end{equation*}
The function $\varphi_{a^-,a^+}$ is smooth on
$\RR^2\setminus((-\infty,a^+_1]\times\{0\})$. Furthermore
\eqref{eqGradTheta}, together with the chain rule, gives
\begin{align*}
  \nabla\varphi_{a^-,a^+}(x)&=\frac12\frac{1}{(x_1-a^+_1)^2+x_2^2}(-x_2,x_1-a^+_1)-
  \frac12\frac{1}{(x_1-a^-_1)^2+x_2^2}(-x_2,x_1-a^-_1)\\
  &= A_{a^-,a^+}(x)
\end{align*}
for all $x \in \RR^2\setminus(-\infty,a^+_1]\times\{0\}$. 
Furthermore we have that, for any $x_1<0$,
\begin{equation*}\lim_{\eta\to 0,\eta>0} \theta_0(x_1,\eta)=\pi \quad\mbox{and}\quad \lim_{\eta\to 0,\eta>0} \theta_0(x_1,-\eta)=-\pi.\end{equation*}
This implies that 
\begin{equation*}\lim_{\eta\to 0,\eta>0} \varphi_{a^-,a^+}(x_1,\eta)= \lim_{\eta\to 0,\eta>0} \varphi_{a^-,a^+}(x_1,-\eta)=0\end{equation*}
for all $x_1<a^-_1$ and that
\begin{equation*}
\lim_{\eta\to 0,\eta>0} \varphi_{a^-,a^+}(x_1,\eta)=\frac{\pi}2 \quad \mbox{and}\quad \lim_{\eta\to 0,\eta>0}
\varphi_{a^-,a^+}(x_1,-\eta)=-\frac{\pi}2
\end{equation*}
for all $x_1\in (a^-_1,a^+_1)$. In particular, $\varphi_{a^-,a^+}$ has
a continuous extension to $\RR^2\setminus \ic(a^-,a^+)$, which we
also denote by $\varphi_{a^-,a^+}$. Since $\pot$ is smooth on
$\RR^2\setminus \ic(a^-,a^+)$, this extension is also smooth and
satisfies $\nabla \varphi_{a^-,a^+}=\pot$. Since
$\RR^2\setminus \ic(a^-_1,a^+_1)$ is connected, the uniqueness is
obvious. Furthermore, since $\theta_0\circ\sigma =-\theta_0$, we have that
$\varphi_{a^-,a^+}\circ\sigma =-\varphi_{a^-,a^+}$.
\end{proof}
\begin{lem}\label{lemGlobalGauge}
  There exists a unique smooth function
  $\psi_{a^-,a^+}:\dpp(a^-,a^+)\to \CC$ satisfying
\begin{enumerate}[\rm (i)]
		\item $\left|\psi_{a^-,a^+}\right|\equiv 1$ on $\dpp(a^-,a^+)$;
		\item $\dfrac{\nabla
                    \psi_{a^-,a^+}}{i\psi_{a^-,a^+}}=2\pot$
                  on $\dpp(a^-,a^+)$;
		\item $\psi_{a^-,a^+}(x_1,0)=-1$ for all $x_1\in
                  (a^-_1,a^+_1)$.
\end{enumerate}
Furthermore, $\psi_{a^-,a^+}$ satisfies
$\psi_{a^-,a^+}\circ \sigma =\overline \psi_{a^-,a^+}$.
\end{lem}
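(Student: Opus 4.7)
The plan is to construct $\psi_{a^-,a^+}$ explicitly by exponentiating the potential produced in Lemma \ref{lemAngle}. Specifically, on $\RR^2\setminus\ic(a^-,a^+)$ I would set
\[
\psi_{a^-,a^+}(x):=e^{2i\varphi_{a^-,a^+}(x)}.
\]
On this set $\varphi_{a^-,a^+}$ is real and smooth, so $|\psi_{a^-,a^+}|\equiv1$ holds automatically, and the chain rule together with $\nabla\varphi_{a^-,a^+}=\pot$ gives $\nabla\psi_{a^-,a^+}=2i\pot\,\psi_{a^-,a^+}$, which is condition (ii). The symmetry $\varphi_{a^-,a^+}\circ\sigma=-\varphi_{a^-,a^+}$ from Lemma \ref{lemAngle} immediately yields $\psi_{a^-,a^+}\circ\sigma=\overline{\psi_{a^-,a^+}}$ off the cut.

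Next I would extend $\psi_{a^-,a^+}$ across the open segment $(a^-_1,a^+_1)\times\{0\}$. From the computation in the proof of Lemma \ref{lemAngle} the function $\varphi_{a^-,a^+}$ has limits $\pm\pi/2$ on the two sides of that segment, hence $e^{2i\varphi_{a^-,a^+}}$ has a common limit equal to $e^{\pm i\pi}=-1$ from above and below. Setting $\psi_{a^-,a^+}\equiv-1$ on the open segment gives a continuous function on $\dpp(a^-,a^+)$, and this value is exactly what condition (iii) demands.

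The main obstacle is to promote this continuous extension to $C^{\infty}$ smoothness across the open segment, since $\varphi_{a^-,a^+}$ itself jumps there. My plan is the standard ``change of branch'' argument: around each point $p$ on the open segment $(a^-_1,a^+_1)\times\{0\}$, the potential $\pot$ is closed and smooth on a small simply-connected disk $D$ not containing $a^\pm$, so there exists a smooth local primitive $\tilde\varphi$ with $\nabla\tilde\varphi=\pot$ in $D$. On $D\setminus\ic(a^-,a^+)$ the function $\tilde\varphi-\varphi_{a^-,a^+}$ is locally constant, so on each of the two half-disks it equals a constant $c^{\pm}$. Exponentiating, $e^{2i\tilde\varphi}=e^{2ic^{\pm}}\psi_{a^-,a^+}$ on each half, and continuity of $e^{2i\tilde\varphi}$ across the segment together with $\psi_{a^-,a^+}\to-1$ from both sides forces $e^{2ic^+}=e^{2ic^-}$. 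Hence $\psi_{a^-,a^+}=e^{-2ic^{+}}e^{2i\tilde\varphi}$ throughout $D$, showing that it is smooth at $p$. Condition (ii) then extends to $D$ by continuity.

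For uniqueness, I would observe that if $\tilde\psi$ also satisfies (i)--(iii), then the quotient $\tilde\psi\,\overline{\psi_{a^-,a^+}}$ is well defined on $\dpp(a^-,a^+)$ and, using (i) and (ii),
\[
\nabla\bigl(\tilde\psi\,\overline{\psi_{a^-,a^+}}\bigr)
=(2i\pot\,\tilde\psi)\overline{\psi_{a^-,a^+}}+\tilde\psi\,(-2i\pot\,\overline{\psi_{a^-,a^+}})=0.
\]
Since $\dpp(a^-,a^+)$ is connected, this quotient is constant, and (iii) evaluated on the open segment gives $(-1)\cdot\overline{(-1)}=1$; thus $\tilde\psi=\psi_{a^-,a^+}$. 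Finally, the identity $\psi_{a^-,a^+}\circ\sigma=\overline{\psi_{a^-,a^+}}$ holds off the segment by the symmetry of $\varphi_{a^-,a^+}$, and extends to the segment because both sides equal $-1$ there.
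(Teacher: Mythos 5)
Your proposal is correct and follows the same overall route as the paper: define $\psi_{a^-,a^+}=e^{2i\varphi_{a^-,a^+}}$ off the cut, check that the one-sided limits on the open segment both equal $-1$, extend by continuity, and prove uniqueness by showing that the ratio of two solutions has vanishing gradient on the connected set $\dpp(a^-,a^+)$ and is pinned to $1$ by condition (iii); the symmetry statement follows in both cases from $\varphi_{a^-,a^+}\circ\sigma=-\varphi_{a^-,a^+}$. The only step you handle differently is the upgrade from continuity to smoothness across the open segment: the paper observes that $\nabla\psi_{a^-,a^+}=2i\psi_{a^-,a^+}\pot$ holds off the cut with $\pot$ smooth on $\dpp(a^-,a^+)$, concludes that the continuous extension is $C^1$ there, and then bootstraps to $C^\infty$; you instead take a smooth local primitive $\tilde\varphi$ of the closed form $\pot$ on a simply connected disk around a point of the segment and show that $\psi_{a^-,a^+}$ coincides with a constant multiple of $e^{2i\tilde\varphi}$ on the whole disk. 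Both mechanisms are valid; your change-of-branch argument makes the local smoothness completely explicit and sidesteps the (true but slightly implicit) claim that a continuous function whose gradient off a line extends continuously is $C^1$ across that line, at the mild cost of invoking local exactness of $\pot$ on simply connected sets avoiding the poles.
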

\begin{proof}
  For all $x\in \RR^2\setminus \ic(a^-,a^+)$, we set
  $\psi_{a^-,a^+}(x)=e^{2i\varphi_{a^-,a^+}(x)}$, where
  $\varphi_{a^-,a^+}$ is the function defined in Lemma
  \ref{lemAngle}. This function is smooth on
  $\RR^2\setminus \ic(a^-,a^+)$ and, for all
  $x \in \RR^2\setminus \ic(a^-,a^+)$,
 \begin{equation*}
	 \nabla \psi_{a^-,a^+}(x)=2i e^{2i\varphi_{a^-,a^+}(x)} \nabla \varphi_{a^-,a^+}(x)= 2i\psi_{a^-,a^+}(x)\pot(x),
 \end{equation*}
 and thus 
 \begin{equation*}
	 \frac{\nabla \psi_{a^-,a^+}(x)}{i\psi_{a^-,a^+}(x)}=2\nabla \varphi_{a^-,a^+} (x)= 2 \pot(x).
 \end{equation*}
 On the other hand, for all $x_1\in (a^-_1,a^+_1)$,
 \begin{equation*}
	 \lim_{\eta\to 0,\eta >0}\psi_{a^-,a^+}(x_1,\eta)=e^{i\pi}=-1 \mbox{ and  }
	 \lim_{\eta \to 0, \eta>0}\psi_{a^-,a^+}(x_1,-\eta)=e^{-i\pi}=-1.
 \end{equation*}
 This implies that $\psi_{a^-,a^+}$ admits a continuous extension to
 $\dpp(a^-,a^+)$, which we also denote by $\psi_{a^-,a^+}$. Since
 $\nabla\psi_{a^-,a^+} =2i\psi_{a^-,a^+} \pot$ on
 $\RR^2\setminus \ic(a^-,a^+)$, with $\pot$ smooth on $\dpp(a^-,a^+)$,
 we obtain that $\psi_{a^-,a^+}$ is of class $C^1$ on $\dpp(a^-,a^+)$,
 and then that $\psi_{a^-,a^+}$ is smooth by a bootstrap argument.
 
 Let us now prove uniqueness. Let us assume that $\widetilde \psi$ is
 a function satisfying conditions (i-iii). Then, we deduce from (ii) that
 \begin{equation*}
   \nabla\left(\frac{\widetilde \psi}{\psi_{a^-,a^+}}\right)=
   \frac{\widetilde \psi}{\psi_{a^-,a^+}}\left(\frac{\nabla
       \widetilde\psi}{\widetilde \psi}
     -\frac{\nabla \psi_{a^-,a^+}}{\psi_{a^-,a^+}}\right)
   =\frac{\widetilde \psi}{\psi_{a^-,a^+}}\left(2i\pot-2i\pot\right)=0
 \end{equation*}
 on $\dpp(a^-,a^+)$. There exists therefore $c \in \CC$ such that
 $\widetilde \psi=c \psi_{a^-,a^+}$ on $\dpp(a^-,a^+)$, and condition
 (iii) tells us that $c=1$, thus proving uniqueness.

  Finally, since
 $\varphi_{a^-,a^+} \circ \sigma =-\varphi_{a^-,a^+}$, we conclude that
 $\psi_{a^-,a^+}\circ \sigma =\overline{\psi_{a^-,a^+}}$.
\end{proof}

To simplify notation, in the following sections, we do not write
explicitly the dependence on $a^-$ and $a^+$, except for the
eigenvalues, but the objects considered depend on the position of the
two poles (so we will write $\HA$ for $H_{A_{a^-,a^+}}$, $\dpOm$ for
$\dpOm(a^-,a^+)$, etc.).

\subsection{Conjugation and symmetry}\label{sec:conjugation-symmetry}

\begin{dfn} Let us define the antilinear, antiunitary operator $K$ on
  $L^2(\Omega)$ by $Ku:=\psi \overline{u}$, where $\psi$ is the gauge
  function defined in Lemma \ref{lemGlobalGauge}. We say that a
  function $u\in L^2(\Omega)$ is \emph{$K$-real} if $Ku=u$. We denote by
  $L^2_K(\Omega)$ the set of $K$-real functions.
\end{dfn}

\begin{lem}\label{lemScalarProd}
	If $u$ and $v$ are in $L^2_K(\Omega)$, $\langle u,v\rangle\in \RR$. 
\end{lem}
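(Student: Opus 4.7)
The plan is to unpack the $K$-real condition into a pointwise identity involving $\psi$ and its conjugate, and then verify that $\overline{\langle u,v\rangle}=\langle u,v\rangle$ by a direct computation.

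First I would rewrite the condition $Ku=u$ in a more convenient form. Since $Ku=\psi\overline{u}$, the identity $u=\psi\overline{u}$ combined with $|\psi|\equiv 1$ (which gives $\overline{\psi}\psi=1$) yields, after multiplying by $\overline{\psi}$, the equivalent pointwise relation $\overline{u}=\overline{\psi}\,u$ almost everywhere on $\Omega$. The same manipulation applied to $v\in L^2_K(\Omega)$ gives $\overline{v}=\overline{\psi}\,v$.

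Next I would use these two identities to compute $\overline{\langle u,v\rangle}$. By definition,
\begin{equation*}
\overline{\langle u,v\rangle}=\overline{\int_\Omega u\,\overline{v}\,dx}=\int_\Omega \overline{u}\,v\,dx.
\end{equation*}
Substituting $\overline{u}=\overline{\psi}\,u$ in the last integral, and using also $\overline{v}=\overline{\psi}\,v$ in the original expression for $\langle u,v\rangle$, I get
\begin{equation*}
\overline{\langle u,v\rangle}=\int_\Omega \overline{\psi}\,u\,v\,dx
\quad\text{and}\quad
\langle u,v\rangle=\int_\Omega u\,\overline{\psi}\,v\,dx,
\end{equation*}
which coincide. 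Hence $\langle u,v\rangle$ is equal to its own complex conjugate, and therefore lies in $\RR$.

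There is essentially no obstacle here: the only subtlety is the use of $|\psi|\equiv 1$ to convert $u=\psi\overline{u}$ into $\overline{u}=\overline{\psi}\,u$, which is precisely why property (i) in Lemma~\ref{lemGlobalGauge} was included. The integrability of the integrands is automatic since $u,v\in L^2(\Omega)$ and $\psi$ is bounded by $1$.
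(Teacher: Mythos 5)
Your proof is correct and follows essentially the same route as the paper: both arguments boil down to using $|\psi|\equiv 1$ together with the $K$-reality of $u$ and $v$ to show $\langle u,v\rangle=\overline{\langle u,v\rangle}$. The only cosmetic difference is that you extract the pointwise identities $\overline{u}=\overline{\psi}\,u$ and $\overline{v}=\overline{\psi}\,v$ first, whereas the paper inserts $\overline{\psi}\psi=1$ directly inside the integral.
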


\begin{proof}
	We have \begin{equation*}\langle u,v\rangle=\int_{\Omega}u\overline v \,dx= \int_{\Omega}\overline\psi u\psi\overline v\,dx=\int_{\Omega}\overline{\psi \overline u}\psi\overline v\,dx=\int_{\Omega}\overline u v \,dx=\overline{\langle u,v\rangle}.\qedhere\end{equation*}
\end{proof}

\begin{rem} The set $L^2_K(\Omega)$ is not a subspace of the complex
  vector space $L^2(\Omega)$, because multiplication by a complex
  number does not preserve $K$-real functions. However, multiplication
  by a real number does preserve these functions, and therefore
  $L^2_K(\Omega)$ is a real vector space. Moreover, Lemma
  \ref{lemScalarProd} shows that the restriction to $L^2_K(\Omega)$ of
  the complex scalar product on $L^2(\Omega)$ is a real scalar
  product. Therefore, $L^2_K(\Omega)$ is a real Hilbert space.
\end{rem} 
\begin{lem}\label{lemComK}
The antilinear operator $K$ preserves the
  domain of $\HA$, and $\HA\circ K=K\circ \HA$.
\end{lem}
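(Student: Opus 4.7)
The plan is to base the whole proof on the form characterization of the Friedrichs extension $\HA$. Recall that $\HA$ is the self-adjoint operator canonically associated with the closed non-negative quadratic form
\[
q(u)=\int_\Omega |(i\nabla+\pot)u|^2\,dx,
\]
whose form domain $D(q)$ is the completion of $C^\infty_c(\dpOm)$ for the norm $u\mapsto (\|u\|_{L^2}^2+q(u))^{1/2}$. The whole argument rests on a single pointwise identity, valid for every smooth function $u$ on $\dpOm$:
\[
(i\nabla+\pot)(Ku)=-\psi\,\overline{(i\nabla+\pot)u}.
\]
This follows by a direct computation using property (ii) of Lemma \ref{lemGlobalGauge}, namely $\nabla\psi=2i\psi\pot$: one expands $(i\nabla+\pot)(\psi\overline u)=i(\nabla\psi)\overline u+i\psi\nabla\overline u+\pot\psi\overline u=-2\pot\psi\overline u+i\psi\nabla\overline u+\pot\psi\overline u$, and then uses that $\pot$ is real-valued so that $i\nabla\overline u-\pot\overline u=-\overline{(i\nabla+\pot)u}$.

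Because $|\psi|\equiv 1$, the above identity implies $|(i\nabla+\pot)(Ku)|=|(i\nabla+\pot)u|$ and $|Ku|=|u|$ pointwise, so $K$ is isometric for the form norm on $C^\infty_c(\dpOm)$. Since $\psi$ is smooth on $\dpOm$, $K$ maps $C^\infty_c(\dpOm)$ into itself, and being involutive ($K^2u=\psi\overline{\psi\overline u}=|\psi|^2u=u$) it extends by density to an antilinear involutive isometry $D(q)\to D(q)$. In particular, $K$ preserves the form domain.

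Polarising the pointwise identity gives, for $u,v\in D(q)$ (by density from smooth compactly supported functions),
\[
q(Ku,Kv)=\int_\Omega(i\nabla+\pot)(Ku)\cdot\overline{(i\nabla+\pot)(Kv)}\,dx=\int_\Omega\overline{(i\nabla+\pot)u}\cdot(i\nabla+\pot)v\,dx=\overline{q(u,v)},
\]
where $q(\cdot,\cdot)$ denotes the sesquilinear form associated with $q$. Similarly, the antilinearity of $K$ together with $|\psi|=1$ yields $\langle K\alpha,K\beta\rangle=\overline{\langle\alpha,\beta\rangle}$ for all $\alpha,\beta\in L^2(\Omega)$.

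Finally, to conclude that $\HA\circ K=K\circ\HA$ I use the standard characterization $\mathrm{Dom}(\HA)=\{u\in D(q):v\mapsto q(u,v)\text{ extends continuously to }L^2(\Omega)\}$ with $\langle\HA u,v\rangle=q(u,v)$ for all $v\in D(q)$. Take $u\in\mathrm{Dom}(\HA)$ and any $v\in D(q)$; since $K(D(q))=D(q)$ we may write $v=K(Kv)$ and compute
\[
q(Ku,v)=q(Ku,K(Kv))=\overline{q(u,Kv)}=\overline{\langle\HA u,Kv\rangle}=\langle K(\HA u),v\rangle.
\]
Since $K(\HA u)\in L^2(\Omega)$, this shows $Ku\in\mathrm{Dom}(\HA)$ with $\HA(Ku)=K(\HA u)$, completing the proof. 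The only real subtlety is bookkeeping with antilinearity: the minus sign appearing in the basic first-order identity and the conjugations induced by $K$ have to be tracked carefully, but they all cancel coherently because $\pot$ is real-valued and $|\psi|=1$.
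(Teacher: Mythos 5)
Your proof is correct, and it starts from exactly the same key identity as the paper, namely $(i\nabla+\pot)(Ku)=-\psi\,\overline{(i\nabla+\pot)u}$ for smooth compactly supported $u$, derived in the same way from $\nabla\psi=2i\psi\pot$ and the realness of $\pot$. Where you genuinely diverge is in how the commutation is transported from $C^{\infty}_c(\dpOm)$ to the full operator domain. The paper tests $\HA Ku$ against $\overline v$ for $v\in C^{\infty}_c(\dpOm)$, obtains $\HA Ku=K\HA u$ on smooth functions, and then invokes density; read literally, that last step needs $C^{\infty}_c(\dpOm)$ to be a core for $\HA$ in the \emph{graph} norm, which is not automatic for the Friedrichs extension (it would amount to essential self-adjointness of $(i\nabla+\pot)^2$ on $C^{\infty}_c(\dpOm)$). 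You instead work entirely at the level of the quadratic form: you show $K$ is an antilinear, involutive isometry of the form domain (which only uses that $C^{\infty}_c(\dpOm)$ is a \emph{form} core, true by construction of the Friedrichs extension), establish $q(Ku,Kv)=\overline{q(u,v)}$ and $\langle K\alpha,K\beta\rangle=\overline{\langle\alpha,\beta\rangle}$, and then read off the statement from the first representation theorem via $q(Ku,v)=\overline{q(u,Kv)}=\overline{\langle \HA u,Kv\rangle}=\langle K\HA u,v\rangle$. This is a cleaner and more robust completion of the same computation: it sidesteps the operator-core issue entirely and makes the antilinear bookkeeping explicit. The paper's version is shorter and stays closer to integration by parts, but your form-theoretic route is the one I would consider fully airtight.
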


\begin{proof}
Let us begin by considering $u\in C^{\infty}_c(\dpOm)$. We have 
\begin{equation*}
  (i\nabla+A)(Ku)=(i\nabla+A)(\psi \overline
  u)=i\psi\nabla \overline u +\psi \overline{u} A + i\overline{u}\nabla \psi.
\end{equation*}
Since $\nabla \psi =2i\psi A$, we obtain
\begin{equation*}
  (i\nabla+A)(Ku)=i\psi \nabla \overline u-\psi \overline u A=-\psi \overline{(i\nabla+A)u}.
\end{equation*}
As a consequence, for any $v\in C^{\infty}_c(\dpOm)$,
\begin{align*}
  \int_{\Omega}H_AKu\,\overline v \,dx &=\int_{\Omega}
  (i\nabla+A)(Ku)\cdot\overline{(i\nabla+A)v}\,dx=-\int_{\Omega}\psi\overline{(i\nabla+A)u}\cdot
  \overline{(i\nabla+A)v}\,dx\\
&= \int_{\Omega}(i\nabla+A)(Kv)\cdot
  \overline{(i\nabla+A)u}\,dx
=\int_{\Omega}Kv\,
  \overline{(i\nabla+A)^2u}\,dx\\
&= \int_{\Omega}\psi \overline v\,
  \overline{(i\nabla+A)^2u}\,dx=\int_{\Omega}KH_Au\,\overline v \,dx.
\end{align*}
We therefore have $H_AKu=KH_Au$ for all $u\in
C^{\infty}_c(\dpOm)$. The conclusion follows by density.
\end{proof}

We deduce  from  Lemma \ref{lemComK} that the eigenspaces of $H_A$ are
stable under the action of $K$. This implies that we can find a basis
of $L^2(\Omega)$ formed by $K$-real eigenfunctions of $H_A$. We also interpret this in another
way: $L^2_K(\Omega)$ is stable under the action of $H_A$ and the
restriction of $H_A$ to $L^2_K(\Omega)$ has the same spectrum as
$H_A$.

We now want to study the consequence of the fact that $\Omega$ is
symmetric with respect to $\axis$ on the operator $H_A$. We therefore
define the antiunitary antilinear operator $\Sigma^c$, acting on
$L^2(\Omega)$, by $\Sigma^c u:=\overline{u}\circ \sigma$.

\begin{lem} \label{lemComSigma}
	The antilinear operator $\Sigma^c$ preserves the domain of $\HA$, and $\HA\circ \Sigma^c=\Sigma^c\circ \HA$. Furthermore, $\Sigma^c\circ K=K\circ \Sigma^c$. 
\end{lem}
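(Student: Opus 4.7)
The strategy is threefold: verify the intertwining $H_A\Sigma^c = \Sigma^c H_A$ on the smooth core $C^\infty_c(\ddot\Omega)$ by a direct chain-rule computation that exploits the transformation law $A\circ\sigma = -AS$; extend it to $\mathrm{Dom}(H_A)$ by a form-closure argument analogous to the one used for $K$ in Lemma \ref{lemComK}; and conclude with a one-line check of $\Sigma^c K = K\Sigma^c$ using $\psi\circ\sigma = \overline\psi$ from Lemma \ref{lemGlobalGauge}.

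For the core computation I would first observe that $\sigma$ is an isometric involution with $\sigma(\Omega)=\Omega$ and $\sigma(\{a^-,a^+\})=\{a^-,a^+\}$, so $\Sigma^c$ is an antiunitary involution of $L^2(\Omega)$ preserving $C^\infty_c(\ddot\Omega)$. For $u\in C^\infty_c(\ddot\Omega)$ and $v:=\Sigma^c u$, writing $S=D\sigma=\mathrm{diag}(1,-1)$, the chain rule gives $\nabla v(x) = S(\nabla\overline u)(\sigma x)$ and $\Delta v(x) = (\Delta\overline u)(\sigma x)$, the second identity relying on the orthogonality of $S$. From $A(\sigma x) = -A(x)S$ (i.e. $A_1\circ\sigma = -A_1$, $A_2\circ\sigma = A_2$) and $|A\circ\sigma|=|A|$, I would verify term by term that
\[
H_A v(x) = -(\Delta\overline u)(\sigma x) + 2iA(x)\cdot S(\nabla\overline u)(\sigma x) + |A(x)|^2\overline u(\sigma x)
\]
coincides with
\[
\Sigma^c(H_A u)(x) = -(\Delta\overline u)(\sigma x) - 2iA(\sigma x)\cdot(\nabla\overline u)(\sigma x) + |A(\sigma x)|^2\overline u(\sigma x),
\]
the key cancellation being the identity $A(x)\cdot S\xi = -A(\sigma x)\cdot \xi$ for any vector $\xi\in\RR^2$.

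To pass to $\mathrm{Dom}(H_A)$ I would note that the very same pointwise computation yields $|(i\nabla+A)\Sigma^c u|^2(x) = |(i\nabla+A)u|^2(\sigma x)$, hence the quadratic form $q(u)=\int_\Omega|(i\nabla+A)u|^2\,dx$ is invariant under $\Sigma^c$. Since $H_A$ is the Friedrichs extension of $q$ with core $C^\infty_c(\ddot\Omega)$, $\Sigma^c$ preserves both the form and operator domains, and the intertwining extends by continuity from the core. For the $K$-commutation, using $\overline\psi\circ\sigma = \psi$ (the conjugate of $\psi\circ\sigma = \overline\psi$),
\[
(\Sigma^c Ku)(x) = \overline{\psi(\sigma x)\overline u(\sigma x)} = \overline{\psi(\sigma x)}\,u(\sigma x) = \psi(x)\,u(\sigma x) = \psi(x)\,\overline{\overline u(\sigma x)} = (K\Sigma^c u)(x).
\]
I expect the only real obstacle to be the sign bookkeeping in the core computation: the antilinearity of $\Sigma^c$ flips $i$ into $-i$ while the symmetry $A\mapsto -AS$ introduces an independent sign, and one must check that these two flips combine to give commutation rather than anticommutation with the magnetic term.
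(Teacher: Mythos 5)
Your proposal is correct and follows essentially the same route as the paper: establish the intertwining on the core $C^{\infty}_c(\dpOm)$ using the transformation law $A\circ\sigma=-AS$, extend to the domain of $\HA$ by density, and verify $\Sigma^c\circ K=K\circ\Sigma^c$ by the same one-line computation based on $\psi\circ\sigma=\overline{\psi}$. The only (immaterial) difference is that you check the core identity pointwise on the second-order expression $-\Delta+2iA\cdot\nabla+|A|^2$, whereas the paper works with the first-order operator $(i\nabla+A)$ and the sesquilinear form via the change of variables $x=\sigma(y)$; both computations are valid and rest on the same cancellation $A(x)\cdot S\xi=-A(\sigma(x))\cdot\xi$.
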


\begin{proof}
  The second point is clear: if $u\in L^2(\Omega)$,
  \begin{equation*}
    \left(\Sigma^cK\right)u(x)=\overline
    {\left(Ku\right)(\sigma(x))}=\overline{\psi(\sigma(x))\overline{u(\sigma(x))}}=\psi(x)\overline
    {\overline{ u(\sigma(x))}}=(K\Sigma^c)u(x).
\end{equation*}
  To prove the first point, we begin by considering $u\in C^{\infty}_c(\dpOm)$. We have
\[
(i\nabla+A)(\Sigma^c u)=i\nabla (\overline u \circ
          \sigma)+(\overline u) \circ \sigma A= \\(i(\nabla \overline
          u)\circ \sigma)S - ((\overline u A)\circ \sigma)S=
          -\overline{\left((i\nabla +A)u\circ \sigma\right)}S.
\]
	For any $v\in C^{\infty}_c(\dpOm)$, we have 
\[
\int_{\Omega}(H_A\Sigma^c u)\, \overline
v\,dx=\int_{\Omega}(i\nabla+A)(\Sigma^c
u)\cdot\overline{(i\nabla+A)v}\,dx=
\int_{\Omega}-\overline{\left((i\nabla +A)u\circ
    \sigma\right)}S\cdot\overline{(i\nabla+A)v}\,dx.
\]
After the change of variable $x=\sigma(y)$, and using the fact that $S$ is symmetric, we find
\begin{align*}
  \int_{\Omega}
  (H_A\Sigma^c u)\, 
  \overline v\,dx&=\int_{\Omega}\overline{(i\nabla
                   +A)u}\cdot-\overline{\left((i\nabla+A)v\circ
                   \sigma\right)}S\,dy\\
                 &=\int_{\Omega}\overline{(i\nabla +A)u}\cdot(i\nabla+A)(\Sigma^c v)\,dy=\int_{\Omega} \overline{H_Au}\, \Sigma^c v\,dy=\int_{\Omega} \overline{H_Au}\,  (\overline v\circ \sigma)\,dy.
\end{align*}
We now do the reverse change of variable $y=\sigma(x)$, thus obtaining
\begin{equation*}
  \int_{\Omega}
  (H_A\Sigma^c u)\, 
  \overline v\,dx=\int_{\Omega}\left(\overline{H_Au}\circ \sigma\right) \overline v \,dx=\int_{\Omega}\left(\Sigma^cH_Au\right) \overline v \,dx.
\end{equation*}
We therefore have $H_A\Sigma^cu=\Sigma^cH_Au$ for all $u\in C^{\infty}_c(\dpOm)$. The conclusion follows by density.
\end{proof}

The second point of Lemma \ref{lemComSigma} implies that $L^2_K(\Omega)$ is stable under the action of $\Sigma^c$. If we write 
\begin{equation*}L^2_{K,\Sigma}(\Omega):= L^2_K(\Omega)\cap \mbox{ker}(\Sigma^c-Id)\end{equation*}
and
\begin{equation*}L^2_{K,a\Sigma}(\Omega):= L^2_K(\Omega)\cap \mbox{ker}(\Sigma^c+Id),\end{equation*}
we observe that every function $u\in L^2_K(\Omega)$ can be decomposed
as 
\begin{equation}\label{eq:15}
u=\frac12(u+\bar u\circ\sigma)+\frac12(u-\bar u\circ\sigma),
\end{equation}
so
that we
have the orthogonal decomposition
\begin{equation}\label{eq:orthogonal}
L^2_K(\Omega)=L^2_{K,\Sigma}(\Omega)\oplus L^2_{K,a\Sigma}(\Omega).
\end{equation}
The first point of Lemma \ref{lemComSigma} implies that $H_A$ leaves
the spaces $L^2_{K,\Sigma}$ and $L^2_{K,a\Sigma}$ invariant. We can
therefore define the operators $H_{A,\Sigma}$ and $H_{A,a\Sigma}$,
restrictions of $H_A$ to $L^2_{K,\Sigma}(\Omega)$ and
$L^2_{K,a\Sigma}(\Omega)$ respectively. The spectrum of $H_A$ is the
reunion (counted with multiplicities) of the spectra of $H_{A,\Sigma}$
and $H_{A,a\Sigma}$.

\subsection{Spectral equivalence to the Laplacian with mixed boundary conditions}\label{sec:spectr-equiv-lapl}

Let us consider the \emph{real} Hilbert space
$L^2_{\RR,\sigma}(\Omega)$ consisting of the real valued
$L^2$-functions $u$ on $\Omega$ such that $u\circ\sigma=u$.

Let us consider the operator $H_{NDN}$  on
  $L^2_{\RR,\sigma}(\Omega)$ defined as the Friedrichs
extension of the differential
operator $-\Delta$ acting on the domain $\{u\in C^{\infty}_{\rm
  c}(\Omega\setminus \mathcal I_c,\RR):u\circ\sigma=u\}$, the space of
real valued smooth functions with compact
support in $\Omega\setminus \mathcal I_c$
symmetric with respect to the axis $x_2=0$. 
The domain of $H_{NDN}$ is
 then given by $\{u\in H^1_0(\Omega\setminus\mathcal
I_c):u\circ\sigma=u\text{ and }\Delta\big|_{\Omega\setminus\mathcal
I_c} u\in L^2_{\RR,\sigma}(\Omega)\}$, being $\Delta\big|_{\Omega\setminus\mathcal
I_c} $ the distributional Laplacian in $\Omega\setminus\mathcal
I_c$. $H_{NDN}$ is a  symmetric, positive, and self-adjoint operator on
  $L^2_{\RR,\sigma}(\Omega)$.  We denote by
  $(\lambda_k^{NDN}(a^+,a^-))_{k\ge 1}$ its eigenvalues.

  In a similar way, we consider the operator $H_{DND}$ on
  $L^2_{\RR,\sigma}(\Omega)$ defined as the Friedrichs extension of
  $-\Delta$ acting on
  $\{u\in C^{\infty}_{\rm c}(\Omega\setminus (\mathcal R\setminus
  \mathcal I_c),\RR):u\circ\sigma=u\}$.
  The domain of $H_{DND}$ is then given by
  $\{u\in H^1_0(\Omega\setminus (\mathcal R\setminus \mathcal I_c)):
  u\circ\sigma=u\text{ and }\Delta\big|_{\Omega\setminus (\mathcal R\setminus \mathcal I_c)}
  u\in L^2_{\RR,\sigma}(\Omega)\}$,
  being
  $\Delta\big|_{\Omega\setminus (\mathcal R\setminus \mathcal I_c)}$
  the distributional Laplacian in
  $\Omega\setminus (\mathcal R\setminus \mathcal I_c)$. $H_{DND}$ is a
  symmetric, positive, and self-adjoint operator on
  $L^2_{\RR,\sigma}(\Omega)$.  We denote by
  $(\lambda_k^{DND}(a^+,a^-))_{k\ge 1}$ its eigenvalues.

\begin{rem}
 Let us consider the upper-half domain associated with $\Omega$
\begin{equation*}
  \Omega^{uh}:=\Omega\cap\{(x_1,x_2)\in \RR^2\,:\, x_2>0\}.
\end{equation*}
We have that $\partial\Omega^{uh}:=\Gamma^{uh}\cup\Gamma_0$,
with
$\Gamma^{uh}:=\partial\Omega\cap\{(x_1,x_2)\in \RR^2\,:\, x_2>0\} $
and 
$\Gamma^0:=\overline{\Omega}\cap \axis$. 
We additionally define $\Gamma_c^0=\Gamma^0\cap \ic$. 

  We notice that, if $\Omega$ has smooth boundary, then the operator
  $H_{NDN}$ can be identified with the Neumann-Dirichlet-Neumann
  Laplacian on $\Omega^{uh}$ denoted by $-\Delta^{NDN}$ and defined as
  the Laplacian on $\Omega^{uh}$ with Dirichlet boundary condition on
  $\Gamma^{uh}\cup \Gamma^0_c$ and Neumann boundary condition on
  $\Gamma^0\setminus\Gamma^0_c$, see Figure
  \ref{figNDN}.

In a similar way, if $\Omega$ has smooth boundary, then the operator
  $H_{DND}$ can be identified with the  Dirichlet-Neumann-Dirichlet Laplacian
        $-\Delta^{DND}$ defined as the Laplacian on $\Omega^{uh}$
        with Dirichlet boundary condition on
        $\Gamma^{uh}\cup(\Gamma^0\setminus\Gamma^0_c)$ and Neumann
        boundary condition on $\Gamma^0_c$, see
        Figure \ref{figDND}.
\end{rem}

\begin{figure}[h]
\begin{center}
  \subfigure[Neumann-Dirichlet-Neumann boundary
  conditions\label{figNDN}]{\includegraphics[width=5cm]{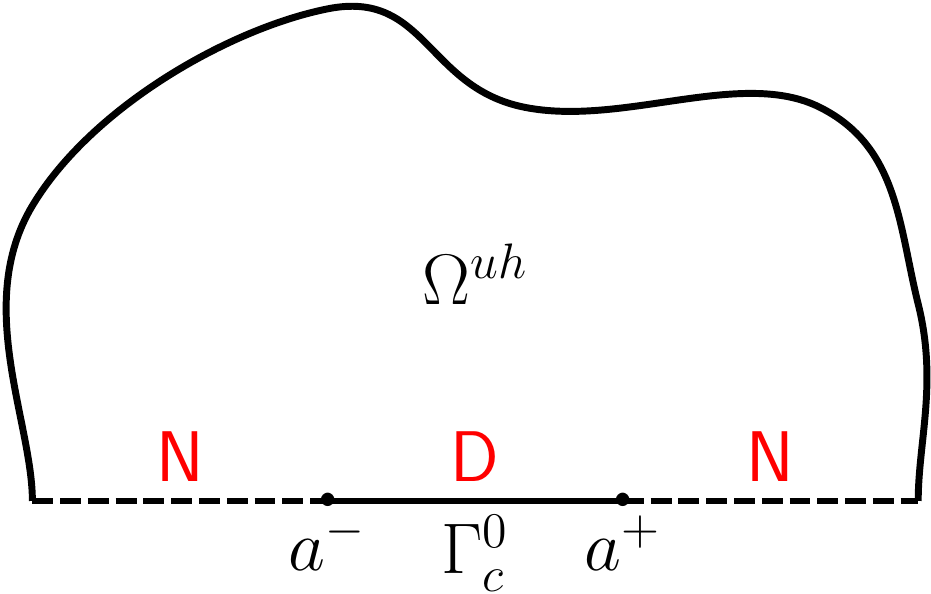}}
  \quad\subfigure[Dirichlet-Neumann-Dirichlet boundary
  conditions\label{figDND}]{\includegraphics[width=5cm]{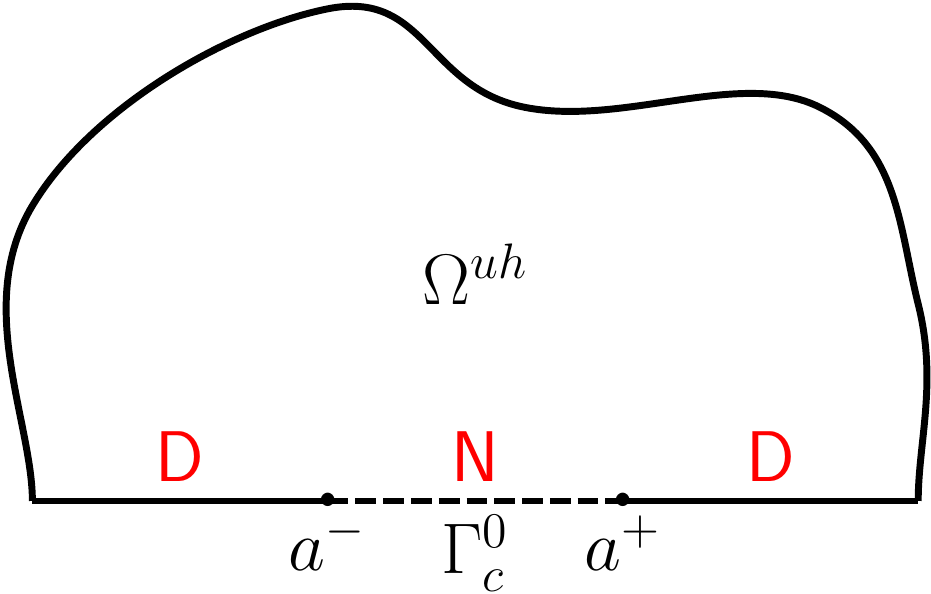}}
\caption{Eigenvalue problems with mixed boundary conditions in $\Omega^{uh}$.}
\end{center}
\end{figure}

The main result of this section if the following equivalence of
$H_{NDN}$ with $H_{A,\Sigma}$ and of $H_{DND}$ with $H_{A,a\Sigma}$.

\begin{prop}\label{propIso}
  The operator $H_{NDN}$ is unitarily equivalent to $H_{A,\Sigma}$ and
  the operator $H_{DND}$ is unitarily equivalent to $H_{A,a\Sigma}$.
\end{prop}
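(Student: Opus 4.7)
The plan is to realize the two claimed equivalences via a gauge transformation built from the phase $\varphi=\varphi_{a^-,a^+}$ of Lemma \ref{lemAngle}, followed in the antisymmetric sector by a symmetric reflection. For $u\in L^2(\Omega)$ set $Uu:=e^{-i\varphi}u$ pointwise on $\Omega\setminus\ic$; since $|e^{-i\varphi}|=1$ and $\ic$ has measure zero, $U$ is a pointwise isometry of $L^2(\Omega)$. Using the $K$-reality relation $u=\psi\bar u=e^{2i\varphi}\bar u$ and the identity $\varphi\circ\sigma=-\varphi$, a direct check shows that for $u\in L^2_K(\Omega)$ the function $Uu$ is real-valued, for $u\in L^2_{K,\Sigma}(\Omega)$ one has $Uu\circ\sigma=Uu$, and for $u\in L^2_{K,a\Sigma}(\Omega)$ one has $Uu\circ\sigma=-Uu$. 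I would then take $U_\Sigma:=U|_{L^2_{K,\Sigma}(\Omega)}$, which is a real unitary onto $L^2_{\RR,\sigma}(\Omega)$, and in the antisymmetric sector I would compose with the reflection $R$ defined by $(Rw)(x_1,x_2):=w(x_1,|x_2|)$, a real unitary from the antisymmetric real $L^2$-subspace onto $L^2_{\RR,\sigma}(\Omega)$; set $U_{a\Sigma}:=R\circ U|_{L^2_{K,a\Sigma}(\Omega)}$.

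The central ingredient is the pointwise gauge identity
\[
(i\nabla+\pot)u=ie^{i\varphi}\nabla(e^{-i\varphi}u),
\]
valid on $\Omega\setminus\ic$ where $\nabla\varphi=\pot$, which yields the quadratic-form identity
\[
q_A(u):=\int_\Omega|(i\nabla+\pot)u|^2\,dx=\int_{\Omega\setminus\ic}|\nabla(Uu)|^2\,dx.
\]
In the $\Sigma$-case, $Uu$ extending continuously across $\ic$ (see below) turns the right-hand side into $\int_\Omega|\nabla(U_\Sigma u)|^2\,dx$, matching the quadratic form of $H_{NDN}$. In the $a\Sigma$-case, since $R$ preserves the Dirichlet energy, one gets $q_A(u)=\int_\Omega|\nabla(U_{a\Sigma}u)|^2\,dx$, matching the quadratic form of $H_{DND}$.

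The boundary conditions then emerge from a trace analysis. The gauge factor $e^{-i\varphi}$ has limits $\mp i$ from above and below on $\ic$ and equals $1$ on $\axis\setminus\ic$. In the $\Sigma$-sector, the $\sigma$-symmetry of $Uu$ equalises the two traces on $\ic$, forcing $Uu=0$ there, while on $\axis\setminus\ic$ no jump or Dirichlet condition is imposed and symmetry produces the natural Neumann condition; this is exactly the form domain of $H_{NDN}$. In the $a\Sigma$-sector, the antisymmetry of $Uu$ combined with continuity across $\axis\setminus\ic$ (where $e^{-i\varphi}\equiv1$) forces $Uu=0$ on $\axis\setminus\ic$, whereas on $\ic$ the antisymmetric jump relation is automatically compatible, so after applying $R$ the function extends smoothly through $\ic$ with no boundary condition there; this matches the form domain of $H_{DND}$. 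Once the forms and their domains are identified through the unitaries $U_\Sigma$ and $U_{a\Sigma}$, the uniqueness of the Friedrichs extension upgrades the form equivalence to a unitary equivalence of the self-adjoint operators.

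The main obstacle is the rigorous verification that $U_\Sigma$ (respectively $U_{a\Sigma}$) sends the form domain of $\HA$ restricted to the $\Sigma$-sector (respectively $a\Sigma$-sector) surjectively onto the form domain of $H_{NDN}$ (respectively $H_{DND}$). This requires density arguments exploiting the vanishing $H^1$-capacity of the segment $\ic$ in $\RR^2$, together with careful control of the behavior near the poles $a^\pm$, where the Aharonov--Bohm singularity obstructs the naive use of standard Sobolev trace theorems and instead calls for magnetic Hardy-type estimates.
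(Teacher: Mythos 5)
Your construction is the same as the paper's: the gauge map $e^{-i\varphi}$ on the symmetric sector, and on the antisymmetric sector your composition $R\circ U$ is pointwise identical to the paper's $U_{a\sigma}$ (equal to $e^{-i\varphi}u$ on $\Omega^{uh}$ and to $-e^{-i\varphi}u$ below the axis). Your analysis of the one-sided limits of $e^{-i\varphi}$ across $\axis$ reproduces exactly the content of the paper's Lemma \ref{lemZeroSet} (vanishing of $\Sigma$-sector functions on $\ic$ and of $a\Sigma$-sector functions on $\axis\setminus\ic$), and the quadratic-form identity is the one the paper uses.

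The genuine gap is the step you yourself defer as ``the main obstacle'': nothing in your argument proves that $U_\sigma$ carries the form domain of $H_{A,\Sigma}$ into $\{w\in H^1_0(\Omega\setminus\ic):\ w\circ\sigma=w\}$ (and onto it), and your trace discussion is legitimate only for continuous representatives, not for a general element of the closure of $C^{\infty}_c(\dpOm)$ in the magnetic form norm. This is precisely where the paper invests its effort: given $u$ in the domain of $H_{A,\Sigma}$, it takes $u_n\in C^{\infty}_c(\dpOm)$ converging to $u$ in form norm and replaces it by the projected sequence $v_n=\frac14\left(u_n+\overline{u_n\circ\sigma}+K\left(u_n+\overline{u_n\circ\sigma}\right)\right)$, which lies in $C^{\infty}_c(\dpOm)\cap L^2_{K,\Sigma}(\Omega)$; the convergence of $v_n$ to $u$ in form norm is exactly where the magnetic Hardy inequality $\|\pot w\|_{L^2(\Omega)}\le C(a,\Omega)\|(i\nabla+\pot)w\|_{L^2(\Omega)}$ that you allude to actually enters. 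Each $v_n$ vanishes on $\ic$ (your jump argument, now applied to a smooth function, where it is valid), so $e^{-i\varphi}v_n\in H^1_0(\Omega\setminus\ic)$ despite the discontinuity of the gauge factor across $\ic$, and passing to the limit gives the inclusion of domains and the intertwining relation; the reverse inclusion follows by applying the inverse gauge $e^{i\varphi}$ to symmetric test functions supported in $\Omega\setminus\ic$. Until this approximation argument is carried out, your proof establishes only a formal matching of boundary conditions rather than the unitary equivalence of the self-adjoint operators.
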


Before proving Proposition \ref{propIso}, we observe that a direct
consequence of Proposition \ref{propIso} combined with the discussion
in \S \ref{sec:conjugation-symmetry} is the following isospectrality
result.

\begin{cor}\label{cor:iso} The sequence $(\lambda_k(a^+,a^-))$ is the reunion, counted with multiplicities, of the sequences $(\lambda_k^{NDN}(a^+,a^-))_{k\ge 1}$ and $(\lambda_k^{DND}(a^+,a^-))_{k\ge 1}$.
\end{cor}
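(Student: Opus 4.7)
The plan is to combine Proposition \ref{propIso} with the decomposition machinery built in Section 3.2, observing that the antilinear symmetry $K$ allows us to pass from the complex Hilbert space $L^2(\Omega)$ to the real Hilbert space $L^2_K(\Omega)$ without losing any spectral information, after which the additional symmetry $\Sigma^c$ splits the problem into two pieces.

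First I would argue that the spectrum of $\HA$ on $L^2(\Omega)$ coincides, with multiplicities, with the spectrum of the restriction $\HA\big|_{L^2_K(\Omega)}$ viewed as a self-adjoint operator on the real Hilbert space $L^2_K(\Omega)$. By Lemma \ref{lemComK}, $K$ commutes with $\HA$, so each (complex) eigenspace $E_\lambda\subset L^2(\Omega)$ is stable under $K$; since $K^2=\mathrm{Id}$ (because $|\psi|\equiv1$) and $K$ is antilinear, every $u\in E_\lambda$ can be written uniquely as $u=u_1+i\,u_2$ with $u_1=\tfrac12(u+Ku)$ and $u_2=\tfrac1{2i}(u-Ku)$ both $K$-real, which exhibits $E_\lambda$ as the complexification of the real subspace $E_\lambda\cap L^2_K(\Omega)$. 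Hence the complex dimension of $E_\lambda$ equals the real dimension of $E_\lambda\cap L^2_K(\Omega)$, and the two spectra agree with multiplicities.

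Next I would invoke Lemma \ref{lemComSigma}: since $\Sigma^c$ commutes with both $\HA$ and $K$, it preserves $L^2_K(\Omega)$, and together with the idempotency $\left(\Sigma^c\right)^2=\mathrm{Id}$ it produces the orthogonal decomposition \eqref{eq:orthogonal}
\[
L^2_K(\Omega)=L^2_{K,\Sigma}(\Omega)\oplus L^2_{K,a\Sigma}(\Omega),
\]
each summand being stable under $\HA$. Consequently $\HA\big|_{L^2_K(\Omega)}$ splits as the orthogonal sum of the operators $H_{A,\Sigma}$ and $H_{A,a\Sigma}$, so its spectrum (with multiplicities) is the reunion of the spectra of $H_{A,\Sigma}$ and $H_{A,a\Sigma}$.

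Finally, the unitary equivalences stated in Proposition \ref{propIso} transfer the spectrum of $H_{A,\Sigma}$ into $(\lambda_k^{NDN}(a^+,a^-))_{k\ge1}$ and the spectrum of $H_{A,a\Sigma}$ into $(\lambda_k^{DND}(a^+,a^-))_{k\ge1}$, yielding the claim. The main subtlety in this plan is the first step: one must be careful that passage to the real form $L^2_K(\Omega)$ preserves multiplicities, which is essentially a Kramers-type argument for antilinear involutions and is what makes the clean additive decomposition of spectra possible. Everything else is bookkeeping around the two commuting (anti)symmetries $K$ and $\Sigma^c$.
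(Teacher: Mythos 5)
Your proposal is correct and follows essentially the same route as the paper: the paper derives the corollary as a direct consequence of Proposition \ref{propIso} together with the discussion in \S\ref{sec:conjugation-symmetry}, namely that the restriction of $\HA$ to $L^2_K(\Omega)$ has the same spectrum as $\HA$ (via the stability of eigenspaces under the antilinear involution $K$) and that the decomposition \eqref{eq:orthogonal} splits this restriction into $H_{A,\Sigma}\oplus H_{A,a\Sigma}$. Your explicit Kramers-type justification that passage to the $K$-real form preserves multiplicities merely spells out a step the paper leaves implicit.
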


We divide the proof of Proposition \ref{propIso} into two lemmas. The first gives information on the nodal set of functions in $L^2_{K,\Sigma}(\Omega)$ or $L^2_{K,a\Sigma}(\Omega)$.

\begin{lem}\label{lemZeroSet}
  If $u\in L^2_{K,\Sigma}(\Omega)\cap C(\dpOm)$, then $u\equiv 0$ on
  $\Omega \cap \ic$.  If $u\in L^2_{K,a\Sigma}(\Omega)\cap C(\dpOm)$,
  then $u\equiv0$ on $(\Omega\cap\axis)\setminus \ic$.
\end{lem}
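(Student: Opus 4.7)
The plan is to reduce everything to a pointwise identity on the symmetry axis and exploit the known values of the gauge function $\psi = \psi_{a^-,a^+}$ along $\axis$.

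First, I would determine the restriction of $\psi$ to $\axis \cap \Omega$ (away from the poles). By the construction of $\varphi_{a^-,a^+}$ in Lemma \ref{lemAngle}, and the limits $\theta_0(x_1,\eta)\to \pi$ as $\eta\to 0^+$ for $x_1<0$, one reads off
\[
\varphi_{a^-,a^+}(x_1,0)=0 \text{ for } x_1\in(-\infty,a_1^-)\cup(a_1^+,+\infty), \qquad
\varphi_{a^-,a^+}(x_1,0)=\tfrac{\pi}{2} \text{ for } x_1\in(a_1^-,a_1^+),
\]
which via $\psi=e^{2i\varphi_{a^-,a^+}}$ gives $\psi\equiv 1$ on $(\axis\setminus\ic)\cap\Omega$ and $\psi\equiv -1$ on $\ic\cap\Omega$ (away from the poles). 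The latter is already part of (iii) in Lemma \ref{lemGlobalGauge}, and the former follows from the continuous extension argument used in the proof of Lemma \ref{lemAngle}.

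Next, let $u\in L^2_{K,\Sigma}(\Omega)\cap C(\dpOm)$ and fix $x=(x_1,0)\in (\Omega\cap\axis)\setminus\{a^-,a^+\}$. Since $\sigma(x)=x$, the condition $\Sigma^c u=u$ means $\overline{u(x)}=u(x)$, so $u(x)\in\RR$. The condition $Ku=u$ reads $\psi(x)\overline{u(x)}=u(x)$, which with $u(x)\in\RR$ becomes $(\psi(x)-1)u(x)=0$. On $\ic\cap\Omega$ we have $\psi(x)=-1$, and so $u(x)=0$. By continuity of $u$ on $\dpOm$, $u$ vanishes on $\Omega\cap\ic$ (the poles are handled by taking a limit along $\ic$).

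Symmetrically, if $u\in L^2_{K,a\Sigma}(\Omega)\cap C(\dpOm)$ and $x=(x_1,0)\in (\Omega\cap\axis)\setminus\{a^-,a^+\}$, then $\Sigma^c u=-u$ gives $\overline{u(x)}=-u(x)$, so $u(x)\in i\RR$. Writing $u(x)=ir(x)$ with $r(x)\in\RR$ and inserting into $\psi(x)\overline{u(x)}=u(x)$ gives $(\psi(x)+1)r(x)=0$. On $(\axis\setminus\ic)\cap\Omega$ we have $\psi(x)=1$, hence $r(x)=0$ and thus $u(x)=0$; continuity extends this to the claimed set. The main (and really only) subtlety is keeping track of the two sign conventions for $\psi$ on the two pieces of the axis, so I would state the restriction of $\psi$ to $\axis\cap\Omega$ as a short preliminary claim before running the two one-line computations above.
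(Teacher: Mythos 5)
Your argument is correct and is essentially the paper's own proof: both combine the pointwise relation $\psi\,\overline{u}=u$ from $K$-reality (using $\psi=-1$ on $\ic$ and $\psi=1$ on $\axis\setminus\ic$) with the relation $\overline{u}=\pm u$ on the fixed axis coming from $\Sigma^c u=\pm u$, and conclude $u=0$ on the part of the axis where the two conditions are incompatible. The only difference is presentational: you isolate the values of $\psi$ on the axis as a preliminary claim, which the paper leaves implicit.
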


\begin{proof}
  Since $Ku=u$, we have $\overline{u}(x_1,0)=u(x_1,0)$ if $x_1<a^-_1$
  of $x_1>a^+_1$, and $\overline{u}(x_1,0)=-u(x_1,0)$ and if
  $x_1\in(a^-_1,a^+_1)$.
	
  If $\Sigma^cu=u$, $\overline{u}(x_1,0)=u(x_1,0)$ for
  all $x_1$, and therefore $u(x_1,0)=0$ if $x_1\in(a^-_1,a^+_1)$. In
  the case where $\Sigma^cu=-u$, $\overline{u}(x_1,0)=-u(x_1,0)$ for
  all $x_1$, and therefore $u(x_1,0)=0$ if $x_1<a^-_1$ or $x_1>a^+_1$.
\end{proof}

The second Lemma gives a unitary operator of similarity, and proves the
two isospectrality results of Proposition \ref{propIso}.

\begin{lem}\label{l:unitmap}
If $u\in L^2(\Omega)$, we define
\begin{equation*}
  U_\sigma u:=e^{-i\varphi} u, \quad U_{a\sigma} u:=
\begin{cases}
e^{-i\varphi} u,&\text{in }\Omega^{uh},\\
-e^{-i\varphi} u,&\text{in }\Omega\setminus\Omega^{uh}
\end{cases}
\end{equation*} 
where $\varphi$ is the function defined in Lemma \ref{lemAngle}.  We have the following properties:
\begin{enumerate}[\rm (i)]
\item $U_\sigma$ defines a one-to-one and unitary mapping from
  $L^2_{K,\Sigma}(\Omega)$ to $L^2_{\RR,\sigma}(\Omega)$ and  
$U_{a\sigma}$ defines a one-to-one and unitary mapping from
    $L^2_{K,a\Sigma}(\Omega)$ to $L^2_{\RR,\sigma}(\Omega)$;
\item $U_\sigma $ maps the domain of $H_{A,\Sigma}$ to the domain of
  $H_{NDN}$ and $U_\sigma\circ H_{A,\Sigma}=H_{NDN}\circ U_{\sigma}$;
\item $U_{a\sigma}$ maps the domain of $H_{A,a\Sigma}$ to the domain of
  $H_{DND}$ and $U_{a\sigma}\circ H_{A,a\Sigma}=H_{DND}\circ U_{a\sigma}$.
\end{enumerate}
\end{lem}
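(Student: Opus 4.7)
The plan is to exploit the gauge identity $\nabla \varphi = \pot$ from Lemma \ref{lemAngle}: multiplication by $e^{\pm i\varphi}$ should conjugate $\HA$ into the plain Laplacian. The unavoidable obstruction is that $\varphi$ is defined only on $\Omega \setminus \ic$ and has a $\pi$-discontinuity across $\ic$. The key insight is that the vanishing conditions of Lemma \ref{lemZeroSet} are precisely what is needed to absorb this discontinuity in each spectral sector: the prefactor $e^{-i\varphi}$ jumps across $\ic$ (limits $\mp i$) but is continuous across $\axis \setminus \ic$, whereas the signed prefactor appearing in $U_{a\sigma}$ is continuous across $\ic$ (both one-sided limits equal $-i$) but jumps across $\axis \setminus \ic$. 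These discontinuity loci are exactly complementary to the nodal sets prescribed by Lemma \ref{lemZeroSet} for $L^2_{K,\Sigma}(\Omega)$ and $L^2_{K,a\Sigma}(\Omega)$ respectively.

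For (i), I would verify the mapping properties by direct algebraic checks. Using $\psi = e^{2i\varphi}$ from Lemma \ref{lemGlobalGauge}, the $K$-reality $\psi \bar u = u$ becomes $\bar u = e^{-2i\varphi} u$, which immediately gives $\overline{e^{-i\varphi} u} = e^{-i\varphi} u$. Combining this with $\varphi \circ \sigma = -\varphi$ (Lemma \ref{lemAngle}) and the translation of $\Sigma^c u = \pm u$ into $u \circ \sigma = \pm \bar u$, a one-line computation shows that $u \in L^2_{K,\Sigma}(\Omega)$ implies $(U_\sigma u) \circ \sigma = U_\sigma u$, and the analogous calculation (in which the cases-defined sign in $U_{a\sigma}$ compensates the extra minus sign) handles $L^2_{K,a\Sigma}(\Omega)$. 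Unitarity is immediate from $|e^{\pm i\varphi}| = 1$, and surjectivity follows from exhibiting the explicit inverses $v \mapsto e^{i\varphi} v$ for $U_\sigma$ and $v \mapsto \pm e^{i\varphi} v$ (same sign convention) for $U_{a\sigma}$, which by the same algebraic checks send $L^2_{\RR,\sigma}(\Omega)$ back into the respective target spaces.

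For (ii) and (iii), the starting point is the pointwise identity on $\Omega \setminus \ic$:
\begin{equation*}
(i\nabla + \pot)(e^{i\varphi} v) = i e^{i\varphi} \nabla v + (\pot - \nabla \varphi) e^{i\varphi} v = i e^{i\varphi} \nabla v,
\end{equation*}
which iterates to $\HA(e^{i\varphi} v) = -e^{i\varphi} \Delta v$. At the level of smooth test functions this yields the classical gauge intertwining $U_\sigma \circ \HA = (-\Delta) \circ U_\sigma$ (and likewise for $U_{a\sigma}$), and the quadratic forms $u \mapsto \int_\Omega |(i\nabla+\pot) u|^2\,dx$ and $v \mapsto \int_\Omega |\nabla v|^2\,dx$ are transported one into the other on a dense core of smooth, appropriately symmetric, compactly supported functions on $\dpOm$.

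The hard part will be matching the form domains across the branch cut. For $u \in L^2_{K,\Sigma}(\Omega)$ in the form domain of $\HA$, the vanishing of $u$ on $\Omega \cap \ic$ from Lemma \ref{lemZeroSet} is precisely what makes $U_\sigma u = e^{-i\varphi} u$ extend continuously by $0$ across $\ic$, placing $U_\sigma u$ in $H^1_0(\Omega \setminus \ic)$; the induced $\sigma$-symmetry then automatically delivers the Neumann behavior on $\axis \setminus \ic$, identifying the target as the form domain of $H_{NDN}$. The analogous argument for $U_{a\sigma}$ uses the vanishing of $u$ on $(\Omega \cap \axis) \setminus \ic$ to kill the discontinuity of the signed prefactor there, yielding $U_{a\sigma} u \in H^1_0(\Omega \setminus (\axis \setminus \ic))$ with Neumann behavior on $\ic$. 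Since the Friedrichs extension is uniquely determined by its quadratic form, the intertwinings $U_\sigma \circ H_{A,\Sigma} = H_{NDN} \circ U_\sigma$ and $U_{a\sigma} \circ H_{A,a\Sigma} = H_{DND} \circ U_{a\sigma}$ of (ii)--(iii) then follow by uniqueness of the extension.
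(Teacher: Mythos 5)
Your overall strategy coincides with the paper's: part (i) is handled by the same algebraic verifications (the identity $\overline{e^{-i\varphi}u}=e^{-i\varphi}u$ from $K$-reality, the symmetry check via $\varphi\circ\sigma=-\varphi$, and the explicit inverses $v\mapsto \pm e^{i\varphi}v$), and for (ii)--(iii) the key observation — that the vanishing of $u$ on $\ic$ (resp.\ on $\axis\setminus\ic$) supplied by Lemma \ref{lemZeroSet} is exactly what absorbs the jump of $e^{-i\varphi}$ (resp.\ of $\mathop{\rm sign}(x_2)e^{-i\varphi}$) and places $U_\sigma u$ in $H^1_0(\Omega\setminus\ic)$ — is precisely the paper's.

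There is, however, one genuine gap in your treatment of (ii)--(iii). You transport the quadratic forms ``on a dense core of smooth, appropriately symmetric, compactly supported functions on $\dpOm$'' and then invoke uniqueness of the Friedrichs extension. But it is not automatic that $C^\infty_c(\dpOm)\cap L^2_{K,\Sigma}(\Omega)$ is a form core for $H_{A,\Sigma}$: the operator $H_{A,\Sigma}$ is defined as the \emph{restriction} of the Friedrichs extension $\HA$ (built from all of $C^{\infty}_c(\dpOm)$) to the invariant subspace $L^2_{K,\Sigma}(\Omega)$, not as a Friedrichs extension constructed from symmetric test functions, so density of the symmetric smooth functions in the form domain of the restriction must be proved. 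The paper does this explicitly: starting from $u_n\in C^{\infty}_c(\dpOm)$ converging to $u$ in the form norm, it passes to the symmetrized sequence $v_n=\frac14\bigl(u_n+\overline{u_n\circ\sigma}+K(u_n+\overline{u_n\circ\sigma})\bigr)$, which lies in $C^{\infty}_c(\dpOm)\cap L^2_{K,\Sigma}(\Omega)$, and uses a magnetic Hardy-type inequality $\|\pot w\|_{L^2(\Omega)}\leq C(a,\Omega)\|(i\nabla+\pot)w\|_{L^2(\Omega)}$ to guarantee that $v_n\to u$ in the form norm. Without this step (or some substitute for it), your ``dense core'' assertion is unsupported, and the identification of the form domains — hence the intertwining via uniqueness of the Friedrichs extension — does not follow. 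The rest of your argument (the pointwise gauge identity, the isometry of the forms on the core, and the conclusion by self-adjointness) is sound once this density is in place; note also that the paper then concludes at the level of operator domains through the weak formulation \eqref{eq:16} rather than purely through form uniqueness, but either route works once the core property is established.
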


\begin{proof}
  Let us first check that for all $u\in L^2_K(\Omega)$,
  $\overline{U_\sigma u}=U_\sigma u$ and $\overline{U_{a \sigma} u}=U_{a,\sigma} u$. Indeed, for $x\in \Omega$,
\[
  \overline{e^{-i\varphi(x)}u(x)}=e^{-i\varphi(x)}e^{2i\varphi(x)}\overline
  u(x)
  =e^{-i\varphi(x)}(Ku)(x)=e^{-i\varphi(x)}u(x).
\]
If $u\in L^2_{K,\Sigma}(\Omega)$, then $U_\sigma u(\sigma(x))=e^{-i\varphi(\sigma(x))}u(\sigma(x))=
e^{i\varphi(x)}\overline{u(x)}=\overline{U_\sigma
  u(x)}=U_\sigma u(x)$ so that $U_\sigma u\in L^2_{\RR,\sigma}(\Omega)$.
If $u\in L^2_{K,a\Sigma}(\Omega)$, then $U_{a\sigma}
u(\sigma(x))=U_{a\sigma} u(x)$ so that 
$U_{a\sigma} u\in L^2_{\RR,\sigma}(\Omega)$.

Furthermore, if $u\in L^2_{K}(\Omega)$ then $|U_\sigma u|=|u|$ and $|U_{a\sigma} u|=|u|$, therefore
\begin{equation*}
  \int_{\Omega}|u|^2\,dx=\int_{\Omega}\left|U_\sigma u\right|^2\,dx=\int_{\Omega}\left|U_{a\sigma} u\right|^2\,dx.
\end{equation*}
Finally, if $v \in L^2_{\RR,\sigma}(\Omega)$, a direct computation shows
that the function $u_\Sigma$ defined on $\Omega$ by
\begin{equation*}
  u_{\Sigma}(x):=e^{i\varphi(x)}v(x)
\end{equation*}
	is in $L^2_{K,\Sigma}(\Omega)$ and that $U_\sigma
        u_{\Sigma}=v$. 
This shows that $U_\sigma$ defines a one-to-one map from
  $L^2_{K,\Sigma}(\Omega)$ to $L^2_{\RR,\sigma}(\Omega)$.
In the same way, the function $u_{a\Sigma}$ defined on $\Omega$ by
\begin{equation*}
  u_{a\Sigma}(x):=e^{i\varphi(x)}v(x) \mbox{ for } x \in \Omega^{uh}
\end{equation*}
	and
\begin{equation*}
  u_{a\Sigma}(x):=-e^{i\varphi(x)}v(x)\mbox{ for } x \in \Omega\cap \{(x_1,x_2)\in \RR^2\,:\,x_2<0\}
\end{equation*}
is in $L^2_{K,a\Sigma}(\Omega)$ and $U_{a\sigma}u_{a\Sigma}=v$. This
shows that
$U_{a\sigma}$ defines a one-to-one map from
    $L^2_{K,a\Sigma}(\Omega)$ to $L^2_{\RR,\sigma}(\Omega)$. We have proved point (i).
	
To prove point (ii), let us begin by considering
$u\in C^{\infty}_c(\dpOm)\cap L^2_{K,\Sigma}(\Omega)$. According to
Lemma \ref{lemZeroSet}, $u\equiv 0$ on $\Omega \cap \ic$. Then
$U_\sigma u\in H^1_0(\Omega\setminus\mathcal I_c)$ (for this it is
crucial that $u$ vanishes on $\mathcal I_c$ since $e^{-i\varphi}$
jumps across $\mathcal I_c$) and
\begin{equation*}
  (i\nabla)(U_\sigma u)=i\nabla(e^{-i\varphi}
  u)=e^{-i\varphi}(i\nabla+\nabla \varphi) u=e^{-i\varphi}(i\nabla+A)
  u
\quad\text{in }\Omega\setminus\mathcal I_c. 
\end{equation*}
We observe that any function $u$ in the domain of the operator
$H_{A,\Sigma}$ can be approximated in the form domain norm by
functions in  $C^{\infty}_c(\dpOm)\cap L^2_{K,\Sigma}(\Omega)$. To
this aim, we can first take a sequence of functions $u_n\in
C^{\infty}_c(\dpOm)$ converging to $u$ in the form domain norm;
then we take the sequence $v_n=\frac14(u_n+\overline{u_n\circ
  \sigma}+K(u_n+\overline{u_n\circ \sigma})$ which stays in
$C^{\infty}_c(\dpOm)\cap L^2_{K,\Sigma}(\Omega)$ and converges to $u$
in the form domain norm
thanks to the validity of the Hardy type inequality 
\[
\|Aw\|_{L^2(\Omega)}\leq C(a,\Omega)\|(i\nabla+A)w\|_{L^2(\Omega)}
\]
which holds for every $w\in C^{\infty}_c(\dpOm)$ and for some
$C(a,\Omega)>0$ depending on $a$ and $\Omega$.

Then we conclude that, for every $u$ in the domain of the operator
$H_{A,\Sigma}$, $U_\sigma u\in H^1_0(\Omega\setminus\mathcal I_c)$ and
\begin{equation*}
  (i\nabla)(U_\sigma u)=e^{-i\varphi}(i\nabla+A)
  u
\quad\text{in }\Omega\setminus\mathcal I_c. 
\end{equation*}
Furthermore, for every
$w\in C^\infty_{c}(\Omega\setminus\mathcal I_c)$
 \begin{equation}\label{eq:16}
\int_{\Omega\setminus\mathcal I_c}\nabla(U_\sigma u)\cdot\nabla w\,dx=
\int_{\dpOm}(i\nabla+A)u\cdot\overline{(i\nabla
  +A)(e^{i\varphi}w)}\,dx. 
\end{equation}
Since $u$ is in the domain of the  Friedrichs
extension of the differential
operator $(i\nabla+A)^2$, we conclude that 
 \[
\left|\int_{\Omega\setminus\mathcal I_c}\nabla(U_\sigma u)\cdot\nabla
  w\,dx\right|\leq {\rm const}\|w\|_{L^2(\Omega)}
\]
for every $w\in C^\infty_{c}(\Omega\setminus\mathcal I_c)$, thus
implying that $U_\sigma u$ stays in the domain of $H_{NDN}$. 
Moreover, by density and \eqref{eq:16} we conclude that 
\begin{equation*}
  U_\sigma(H_{NDN} u)=e^{-i\varphi}(i\nabla+A)^2u
\end{equation*}
completing the proof of (ii).
	
The proof of (iii) can be obtained in a similar way, observing that 
any $u\in C^{\infty}_c(\dpOm)\cap L^2_{A,a\Sigma}(\Omega)$ vanishes on
$\mathcal R\setminus\mathcal I_c$; hence $U_{a\sigma}u\in
H^1_0(\Omega\setminus(\mathcal R\setminus\mathcal I_c))$ (for this it is
crucial that $u$ vanishes on $\mathcal R\setminus \mathcal I_c$ since
$\mathop{\rm sign}(x_2)e^{-i\varphi}$
jumps across $\mathcal R\setminus \mathcal I_c$).
\end{proof}

\subsection{Proof of Theorem \ref{t:ab}}

Combining the isospectrality result of Corollary \ref{cor:iso} with
Theorem \ref{t:asynull} we can now prove Theorem \ref{t:ab}.

\begin{proof}[Proof of Theorem \ref{t:ab}]
 It is known from \cite{lena2015} that $\lambda_{N}^a\to
 \lambda_{N}(\Omega)$; in particular the continuity result of
 \cite{lena2015} implies that, since $\lambda_N(\Omega)$ is simple,
 then also $\lambda_{N}^a$ is simple. It is not restrictive to assume that $u_N$ is
 real valued.
It is easy to prove that, for all $a>0$ small, there exists $u_N^a$
eigenfunction of $(i\nabla +\pot)^2$ associate to $\lambda_N^a$ such
that 
 \begin{equation}\label{eq:21}
u_N^a\to u_N\quad \text{in }C^2_{\rm loc}(\Omega\setminus\{0\},\CC)
\end{equation}
as $a\to0^+$; moreover it is possible to choose $u_N^a\in
L^2_{K_{a^-,a^+}}(\Omega)$ (otherwise take 
$\frac12(u_N^a+K_{a^-,a^+}(u_N^a))$ which is a $K_{a^-,a^+}$-real
eigenfunction for $\lambda_{N}^a$ still converging to $u_N$).
 
The orthogonal decomposition \eqref{eq:15} and the simplicity of
$\lambda_N^a$ imply that either $u_N^a\in 
L^2_{K,\Sigma}(\Omega)$ (and then, by Lemma \ref{lemZeroSet},
$u_N^a\equiv 0$ on $[-a,a]\times\{0\}$)
 or $u_N^a\in L^2_{K,a\Sigma}(\Omega)$ (and then
$u_N^a\equiv 0$ on $(\RR\setminus(-a,a))\times\{0\}$).
If $u_N^a\equiv 0$ on $(\RR\setminus(-a,a))\times\{0\}$, then
\eqref{eq:21} would imply that $u_N\equiv 0$ on $\RR\times\{0\}$ thus
contradicting the assumption $\alpha\neq0$. Hence we have that
necessarily  $u_N^a\in 
L^2_{K,\Sigma}(\Omega)$. Then $\lambda_N^a$ is an eigenvalue of
$H_{A_{a^-,a^+},\Sigma}$ and, by Proposition \ref{propIso}, of
$H_{NDN}$. Therefore 
\[
\lambda_N^a=\lambda_N(\Omega\setminus([-a,a]\times\{0\}))
\]
and the conclusion follows applying Theorem \ref{t:asynull}.
\end{proof}

\appendix
 \section{Proof of Theorem \ref{propL1intro}}

 Since our setting is a little
 different from \cite{Courtois1995Holes} (which considers  manifolds
 without boundary) and Theorem \ref{propL1intro} is quite hidden in
 the arguments of  \cite{Courtois1995Holes}, we think it  is
 worthwhile giving in this appendix a proof of Theorem
 \ref{propL1intro}. Our approach is different from the one used in
 \cite{Courtois1995Holes}.
 It relies on the spectral theorem to estimate how closely approximate
 eigenvalues and eigenfunctions
 approach the true one.

Let us begin with the following crucial estimate, which is the analogue of \cite[Lemma 3.2]{Courtois1995Holes}.
\begin{lem}\label{lemL2Norm}
Let  $\Omega\subset\RR^n$ be a bounded, connected, and open set.
   If $(K_{\eps})_{\eps>0}$ is a family of compact sets contained in
   $\Omega$ 
  concentrating to a compact set $K$ with $\mbox{\rm
    Cap}_{\Omega}K=0$, then for every $f\in H_0^1(\Omega)$
\begin{equation*}
  \int_{\Omega}|V_{K_{\eps},f}|^2\,dx =o\left(\mbox{\rm Cap}_{\Omega}(K_{\eps},f)\right)
  \quad \text{ as }\eps \to 0.
\end{equation*}
\end{lem}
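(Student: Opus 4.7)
The plan is to expand $v_\eps:=V_{K_\eps,f}$ in an $L^2(\Omega)$-orthonormal basis $\{u_j\}_{j\ge1}$ of Dirichlet eigenfunctions of $-\Delta$ on $\Omega$, with corresponding eigenvalues $0<\lambda_1\le\lambda_2\le\cdots\to+\infty$, and to estimate its Fourier coefficients $c_j^\eps:=\langle v_\eps,u_j\rangle_{L^2(\Omega)}$ at low and high frequencies separately. Writing $c_\eps:=\mbox{\rm Cap}_{\Omega}(K_\eps,f)=\|\nabla v_\eps\|_{L^2(\Omega)}^2=\sum_{j\ge1}\lambda_j(c_j^\eps)^2$, a standard high-frequency truncation gives, for every $N\in\NN$,
\[
\sum_{j>N}(c_j^\eps)^2\le\frac{1}{\lambda_{N+1}}\sum_{j>N}\lambda_j(c_j^\eps)^2\le\frac{c_\eps}{\lambda_{N+1}},
\]
so that everything reduces to showing $(c_j^\eps)^2=o(c_\eps)$ as $\eps\to0^+$ for each fixed $j$.

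For the low-frequency coefficients I would use the auxiliary $u_j$-capacitary potential $V_{K_\eps,u_j}$. The key observation is that $\phi_\eps:=u_j-V_{K_\eps,u_j}$ belongs to $H^1_0(\Omega\setminus K_\eps)$ by the very definition of $V_{K_\eps,u_j}$, so it is admissible as a test function in the weak formulation \eqref{eq:4} satisfied by $v_\eps$. Combining this with the weak eigenvalue identity $\int_\Omega\nabla v_\eps\cdot\nabla u_j\,dx=\lambda_j c_j^\eps$ yields
\[
\lambda_j c_j^\eps=\int_{\Omega}\nabla v_\eps\cdot\nabla V_{K_\eps,u_j}\,dx,
\]
and then Cauchy-Schwarz together with \eqref{eq:fcap2} produces
\[
(c_j^\eps)^2\le\frac{c_\eps\,\mbox{\rm Cap}_{\Omega}(K_\eps,u_j)}{\lambda_j^2}.
\]
To close the argument, I would observe that $\mbox{\rm Cap}_{\Omega}(K,u_j)=0$: indeed, since $\mbox{\rm Cap}_{\Omega}K=0$ we have $H^1_0(\Omega\setminus K)=H^1_0(\Omega)$, hence the admissibility condition in \eqref{eq:fcap} reduces to $f\in H^1_0(\Omega)$, the infimum is attained at $f\equiv 0$, and $V_{K,u_j}\equiv 0$. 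Applying Remark \ref{r:capcon}(i) then gives $\mbox{\rm Cap}_{\Omega}(K_\eps,u_j)\to 0$, whence $(c_j^\eps)^2=o(c_\eps)$ for every fixed $j$.

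Putting the two estimates together, for every $N\in\NN$,
\[
\|v_\eps\|_{L^2(\Omega)}^2=\sum_{j\le N}(c_j^\eps)^2+\sum_{j>N}(c_j^\eps)^2\le o(c_\eps)+\frac{c_\eps}{\lambda_{N+1}}\quad\text{as }\eps\to0^+,
\]
so that $\limsup_{\eps\to0^+}\|v_\eps\|_{L^2(\Omega)}^2/c_\eps\le 1/\lambda_{N+1}$ for every $N$, and letting $N\to\infty$ gives the conclusion. The main difficulty is the selection of the test function: the naive choice $u_j(1-V_{K_\eps})$ would require controlling $\|u_j\|_{L^\infty(\Omega)}$, whereas $\phi_\eps=u_j-V_{K_\eps,u_j}$ bypasses any boundedness assumption and couples the estimate directly to the vanishing $u_j$-capacity $\mbox{\rm Cap}_{\Omega}(K_\eps,u_j)$.
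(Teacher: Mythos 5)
Your proof is correct, but it follows a genuinely different route from the paper's. The paper argues by contradiction: it normalizes $W_n=V_{K_{\eps_n},f}/\|V_{K_{\eps_n},f}\|_{L^2(\Omega)}$, uses weak $H^1_0$-compactness and the compact embedding into $L^2$ to extract a limit $W$ with $\|W\|_{L^2(\Omega)}=1$ which is harmonic in $\Omega\setminus K$, hence (since $\mbox{\rm Cap}_\Omega(K)=0$) harmonic in all of $\Omega$ and in $H^1_0(\Omega)$, hence zero --- a contradiction. That argument is shorter and softer, and makes no reference to the spectrum of $-\Delta$. Your argument instead decomposes $V_{K_\eps,f}$ in the Dirichlet eigenbasis, kills the high frequencies by the energy identity $\sum_j\lambda_j(c_j^\eps)^2=c_\eps$, and controls each low-frequency coefficient by testing the equation for $V_{K_\eps,f}$ with $u_j-V_{K_\eps,u_j}\in H^1_0(\Omega\setminus K_\eps)$, which is a nice device: it yields the clean bound $(c_j^\eps)^2\le c_\eps\,\mbox{\rm Cap}_\Omega(K_\eps,u_j)/\lambda_j^2$ with no boundedness assumption on $u_j$. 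What your approach buys is an explicit, quantitative estimate $\|V_{K_\eps,f}\|_{L^2(\Omega)}^2\le c_\eps\bigl(\sum_{j\le N}\lambda_j^{-2}\mbox{\rm Cap}_\Omega(K_\eps,u_j)+\lambda_{N+1}^{-1}\bigr)$ and a direct (non-contradiction) structure; what it costs is reliance on the discreteness of the Dirichlet spectrum and on Proposition \ref{p:conv_cap} (Remark \ref{r:capcon}(i)) for $\mbox{\rm Cap}_\Omega(K_\eps,u_j)\to0$, which is itself proved by the same kind of compactness argument the paper uses here --- so the compactness input has not been eliminated, only relocated; note there is no circularity, since Proposition \ref{p:conv_cap} does not use Lemma \ref{lemL2Norm}. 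Two harmless points to make explicit: the case $c_\eps=0$ forces $V_{K_\eps,f}=0$, so the quotient is only taken when $c_\eps>0$; and the identification of $\int_{\Omega\setminus K_\eps}$ with $\int_\Omega$ for test functions extended by zero is the same convention the paper uses in \eqref{eq:5}.
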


\begin{proof}
 Let us assume by contradiction that there exists a
  sequence $\eps_n\to 0$ and a constant $C>0$ such that
\begin{equation*}
  \int_{\Omega}|V_{K_{\eps_n},f}|^2\,dx \ge \frac1C\mbox{\rm Cap}_{\Omega}(K_{\eps_n},f).
\end{equation*}
We set
\begin{equation*}
	W_n:=\frac1{\left\|V_{K_{\eps_n},f}\right\|_{L^2(\Omega)}}V_{K_{\eps_n},f}.
\end{equation*}
We have 
\begin{equation*}
 \left\|W_n\right\|_{L^2(\Omega)}=1
\end{equation*}
and
\begin{equation*}
 \left\|\nabla W_n\right\|_{L^2(\Omega)}^2=\frac1{\left\|V_{K_{\eps_n},f}\right\|_{L^2(\Omega)}^2}\mbox{\rm Cap}_{\Omega}(K_{\eps_n},f)\le C.
\end{equation*}
By weak compactness of the unit ball of $H^1_0(\Omega)$ and
compactness of the inclusion $H^1_0(\Omega)\subset L^2(\Omega)$, there
exists an increasing sequence of integers $(n_k)_{k\ge 1}$ and a
function $W\in H^1_0(\Omega)$ such that
$\left(W_{n_k}\right)_{k\ge 1}$ converges to $W$ when $k$ goes to
$+\infty$, weakly in $H^1_0(\Omega)$ and strongly in $L^2(\Omega)$.
We have that $\left\|W\right\|_{L^2(\Omega)}=1$ and $\Delta W=0$ in
$\Omega \setminus K$ in a weak sense.  This last equation implies that
$W$ is harmonic in $\Omega$ (since $\mbox{\rm Cap}_{\Omega}K=0$), and
therefore that $W$ is identically $0$. We
have reached a contradiction and proved the lemma.
\end{proof}

\begin{cor}
	\label{corNormL2}
	If $(K_{\eps})_{\eps>0}$ is a family of compact sets contained in $\Omega$
	concentrating to a compact set $K\subset\Omega$ 
with $\mbox{\rm Cap}_{\Omega}K=0$, then, for any $f\in H^1_0(\Omega)\cap L^\infty(\Omega)$,
	\[
\int_{\Omega}|V_{K_\eps,f}|^2\,dx
=o\left(\mbox{\rm Cap}_{\Omega}(K_\eps)\right) \qquad \text{ as
}\eps\to 0.
\] 
\end{cor}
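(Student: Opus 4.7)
The plan is to reduce the corollary to Lemma \ref{lemL2Norm} by means of the pointwise comparison
\[
|V_{K_\eps,f}(x)|\le \|f\|_{L^\infty(\Omega)}\, V_{K_\eps}(x) \quad \text{for a.e.\ } x\in\Omega.
\]
Once this inequality is in hand, the conclusion is immediate. Indeed, fix a smooth cutoff $\eta_K\in C^\infty_c(\Omega)$ with $\eta_K\equiv 1$ in a neighborhood $U$ of $K$; by the concentration hypothesis, $K_\eps\subset U$ for all sufficiently small $\eps$, so that $\eta_K\equiv 1$ on a neighborhood of $K_\eps$. Consequently $V_{K_\eps,\eta_K}=V_{K_\eps}$ and $\mbox{\rm Cap}_\Omega(K_\eps,\eta_K)=\mbox{\rm Cap}_\Omega(K_\eps)$, and Lemma \ref{lemL2Norm} applied with $f=\eta_K$ yields $\int_\Omega V_{K_\eps}^2\,dx=o\bigl(\mbox{\rm Cap}_\Omega(K_\eps)\bigr)$. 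Squaring the comparison inequality and integrating then gives
\[
\int_\Omega |V_{K_\eps,f}|^2\,dx \le \|f\|_{L^\infty(\Omega)}^2 \int_\Omega V_{K_\eps}^2\,dx = o\bigl(\mbox{\rm Cap}_\Omega(K_\eps)\bigr),
\]
which is precisely the claim.

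The heart of the argument is therefore the proof of the pointwise comparison, which I would obtain by a weak maximum principle. Setting $M:=\|f\|_{L^\infty(\Omega)}$ and $w:=V_{K_\eps,f}-MV_{K_\eps}\in H^1_0(\Omega)$, both $V_{K_\eps,f}$ and $V_{K_\eps}$ are weakly harmonic in $\Omega\setminus K_\eps$ in the sense of \eqref{eq:4}, and hence so is $w$. The idea is to test this weak harmonicity against $w^+$ to obtain $\int_\Omega|\nabla w^+|^2\,dx=0$, which forces $w^+\equiv 0$, i.e.\ $V_{K_\eps,f}\le MV_{K_\eps}$; running the same argument on $-V_{K_\eps,f}$ (and using $-f\ge -M$) gives $-V_{K_\eps,f}\le M V_{K_\eps}$, completing the comparison.

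The main technical obstacle is the verification that $w^+$ is an admissible test function, i.e.\ that $w^+\in H^1_0(\Omega\setminus K_\eps)$. Since $K_\eps$ is merely a compact set without any regularity, this has to be read through the variational definition. I would write
\[
w=\bigl(V_{K_\eps,f}-f\bigr)-M\bigl(V_{K_\eps}-\eta_K\bigr)+\bigl(f-M\eta_K\bigr),
\]
observe that the first two summands lie in $H^1_0(\Omega\setminus K_\eps)$ by the very definition of $V_{K_\eps,f}$ and $V_{K_\eps}$, and note that the last summand satisfies $f-M\eta_K\le 0$ almost everywhere on $\{\eta_K=1\}$, a set containing $K_\eps$. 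A Stampacchia-type truncation argument then shows that the positive part of a sum with such structure still belongs to $H^1_0(\Omega\setminus K_\eps)$, which closes the gap and completes the proof.
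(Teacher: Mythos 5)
Your proof is correct and follows essentially the same route as the paper: the pointwise comparison $|V_{K_\eps,f}|\le \|f\|_{L^\infty(\Omega)}\,V_{K_\eps}$ via the maximum principle, followed by Lemma \ref{lemL2Norm} applied with $f=\eta_K$. The paper simply invokes ``the maximum principle'' where you spell out the weak formulation; the one step you leave slightly informal, namely that $w^+\in H^1_0(\Omega\setminus K_\eps)$ for a merely compact $K_\eps$, is exactly the standard quasi-continuity characterization of $H^1_0$ of the complement of a compact set (rather than a bare Stampacchia truncation), and with that reading your argument is complete.
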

\begin{proof}
  By the maximum principle for harmonic functions in
  $\Omega\setminus K_\eps$ we have that
\[
\left|V_{K_\eps,f}\right|\le \left(\max_{\Omega}|f|\right)
  V_{K_\eps}.
\]
  Hence
    $\int_{\Omega}|V_{K_\eps,f}|^2\,dx\leq
    \left(\max_{\Omega}|f|\right)^2\int_{\Omega}|V_{K_\eps}|^2dx$
    and the conclusion follows from Lemma \ref{lemL2Norm} (with
    $f=\eta_{K}$).
\end{proof}

We are now in position to prove Theorem \ref{propL1intro}.

\begin{proof}[Proof of Theorem \ref{propL1intro}]
 For $\eps>0$, we denote by $-\Delta_{\eps}$ the Dirichlet Laplacian on $\Omega\setminus K_{\eps}$. More precisely, $-\Delta_{\eps}$ is the self-adjoint operator obtained from the restriction of the quadratic form
 \[q(u)=\int_{\Omega}\left|\nabla u\right|^2\,dx\] to
 $H^1_0(\Omega\setminus K_{\eps})$ through the Friedrichs' extension
 procedure (see for instance \cite[Theorem X.23]{ReeSim75}).

 To simplify notation, we write
 $\lambda_{\eps}=\lambda_N(\Omega\setminus K_{\eps})$,
 $c_{\eps}=\mbox{\rm Cap}_{\Omega}(K_\eps,u_N)$,
 $V_{\eps}=V_{K_{\eps},u_N}$, and we denote by $q$ both the quadratic
 form defined above and the associated bilinear form. We write
 $\psi_{\eps}=u_N-V_{\eps}$.  Let us note that by definition of the
 potential $V_{\eps}$, $\psi_{\eps}$ is the orthogonal projection of
 $u_N$ on $H^1_0(\Omega\setminus K_{\eps})$, in the space
 $H^1_0(\Omega)$ endowed with the scalar product $q$. For any
 $\varphi\in H_0^1(\Omega\setminus K_{\eps})$,
\begin{align*}
  q(\psi_{\eps},\varphi)-\lambda_N(\Omega)\langle \psi_{\eps},\varphi\rangle_{L^2(\Omega)}&=q(u_N,\varphi)-\lambda_N(\Omega)\langle \psi_{\eps},\varphi\rangle_{L^2(\Omega)}\\
  &= \lambda_N(\Omega)\langle
  u_N,\varphi\rangle_{L^2(\Omega)}-\lambda_N(\Omega)\langle
  \psi_{\eps},\varphi\rangle_{L^2(\Omega)}=\lambda_N(\Omega)\langle
  V_{\eps},\varphi\rangle_{L^2(\Omega)}.
\end{align*}
This means that $\psi_{\eps}$ is in the domain of the operator $-\Delta_{\eps}$ and that 
\begin{equation}
	\label{eq:Remainder}
	(-\Delta_{\eps}-\lambda_N(\Omega))\psi_{\eps}=\lambda_N(\Omega)V_{\eps}.
\end{equation}
According to Lemma \ref{lemL2Norm},
$\left\|V_{\eps}\right\|_{L^2(\Omega)}=o\big(c_{\eps}^{1/2}\big)$ as $\eps\to0^+$, so that 
\begin{equation*}
	\left\|(-\Delta_{\eps}-\lambda_N(\Omega))\psi_{\eps}\right\|_{L^2(\Omega)}=o\big(c_{\eps}^{1/2}\big)
\end{equation*}
as $\eps\to0^+$.
From the spectral theorem (see for instance \cite[Proposition 8.20]{Helffer2013}), we get
\begin{equation*}
  \mbox{\rm
    dist}\left(\lambda_N(\Omega),\sigma(-\Delta_{\eps})\right)\le 
  \frac{\left\|(-\Delta_{\eps}-\lambda_N(\Omega))\psi_{\eps}\right\|_{L^2(\Omega)}}{\left\|\psi_{\eps}\right\|_{L^2(\Omega)}}
  =o\big(c_{\eps}^{1/2}\big),\quad\text{as }\eps\to0^+,
\end{equation*}
where $\sigma(-\Delta_{\eps})$ is the spectrum of the self-adjoint
operator $-\Delta_{\eps}$. We recall that $\lambda_{\eps}\to
\lambda_N(\Omega)$ as $\eps\to 0^+$: this an immediate corollary of
\cite[Theorem 2.3]{RT}. Since $\lambda_N(\Omega)$ is assumed to be
simple, $\lambda_{\eps}$ is simple for $\eps>0$ small enough, and
\begin{equation*}
	\left|\lambda_{\eps}-\lambda_N(\Omega)\right|=o\big(c_{\eps}^{1/2}\big)\quad\text{as
        }\eps\to0^+.
\end{equation*}
Let us now denote by $\Pi_{\eps}$ the orthogonal projection from $L^2(\Omega)$ onto the one-dimensional eigenspace associated with $\lambda_{\eps}$, and let us write $\widetilde{u}_{\eps}:=\psi_{\eps}-\Pi_{\eps}\psi_{\eps}$. We have
\begin{equation*}
	\left(-\Delta_{\eps}-\lambda_{\eps}\right)\Pi_{\eps}\psi_{\eps}=0,
\end{equation*}
and therefore 
\begin{equation*}
	\left(-\Delta_{\eps}-\lambda_{\eps}\right)\widetilde{u}_{\eps}=\left(-\Delta_{\eps}-\lambda_{\eps}\right)\psi_{\eps}.
\end{equation*}
Since 
\begin{equation*}
	\left\|\left(-\Delta_{\eps}-\lambda_{\eps}\right)\psi_{\eps}\right\|_{L^2(\Omega)}\le \left|\lambda_N(\Omega)-\lambda_{\eps}\right|\left\|\psi_{\eps}\right\|_{L^2(\Omega)}+\left\|(-\Delta_{\eps}-\lambda_N(\Omega))\psi_{\eps}\right\|_{L^2(\Omega)},
\end{equation*}
we obtain
\begin{equation*}
\left\|\left(-\Delta_{\eps}-\lambda_{\eps}\right)\widetilde{u}_{\eps}\right\|_{L^2(\Omega)}=o\big(c_{\eps}^{1/2}\big)
\quad\text{as
        }\eps\to0^+.
\end{equation*}
Let us denote by $\mathcal{K}_{\eps}$ the closed subspace $\mbox{\rm
  Im}(I-\Pi_{\eps})=\mbox{\rm ker}(\Pi_{\eps})$, and by $T_{\eps}$ the
restriction of the operator $-\Delta_{\eps}$ to $\mathcal
K_{\eps}$. The operator $T_{\eps}$ is self-adjoint, with spectrum
$\sigma(T_{\eps})=\sigma(-\Delta_{\eps})\setminus
\{\lambda_{\eps}\}$. Furthermore, since $\lambda_j(\Omega\setminus
K_{\eps})\to \lambda_j(\Omega)$ for all $j\in\NN_1$ as $\eps\to 0^+$,
and since $\lambda_N(\Omega)$ is simple, there exists some $\delta>0$
such that $\mbox{\rm dist}(\lambda_{\eps},\sigma(T_{\eps}))\ge\delta$
for $\eps>0$ small enough. Using the spectral theorem for the operator
$T_{\eps}$, we get
\begin{equation*}
  \mbox{\rm dist}\left(\lambda_{\eps},\sigma(T_{\eps})\right)\left\|\widetilde{u}_{\eps}\right\|_{L^2(\Omega)}\le\left\|\left(T_{\eps}-\lambda_{\eps}\right)\widetilde{u}_{\eps}\right\|_{L^2(\Omega)},
\end{equation*} 
and therefore 
\begin{equation*}
  \left\|\psi_{\eps}-\Pi_{\eps}\psi_{\eps}\right\|_{L^2(\Omega)}\le \frac{\left\|\left(T_{\eps}-\lambda_{\eps}\right)\widetilde{u}_{\eps}\right\|_{L^2(\Omega)}}{\delta}=o\big(c_{\eps}^{1/2}\big) \quad\text{as
  }\eps\to0^+.
\end{equation*}
Consequently, we have
\begin{equation*}
  \left\|u_N-\Pi_{\eps}\psi_{\eps}\right\|_{L^2(\Omega)}\le \left\|V_{\eps}\right\|_{L^2(\Omega)}+\left\|\psi_{\eps}-\Pi_{\eps}\psi_{\eps}\right\|=o\big(c_{\eps}^{1/2}\big),
\end{equation*}
and therefore
\begin{equation*}
\left\|\Pi_{\eps}\psi_{\eps}\right\|_{L^2(\Omega)}=1+o\big(c_{\eps}^{1/2}\big) \quad\text{as
        }\eps\to0^+.
\end{equation*}
This implies in particular that $\Pi_{\eps}\psi_{\eps}$ is non-zero
for $\eps>0$ small enough, so that  we can define
\begin{equation*}
	u_{\eps}=\frac{\Pi_{\eps}\psi_{\eps}}{\left\|\Pi_{\eps}\psi_{\eps}\right\|_{L^2(\Omega)}},
\end{equation*}
an $L^2(\Omega)$-normalized eigenfunction of $-\Delta_{\eps}$
associated with $\lambda_{\eps}$. 
A simple computation shows that 
\begin{equation*}
	\|u_{\eps}-\psi_{\eps}\|_{L^2(\Omega)}=o\big(c_{\eps}^{1/2}\big)
\end{equation*}
and
\begin{equation*}
\|u_{\eps}-u_N\|_{L^2(\Omega)}=o\big(c_{\eps}^{1/2}\big)
\end{equation*}
as $\eps\to0^+$.
Taking the scalar product of equation \eqref{eq:Remainder} with $u_{\eps}$, we obtain 
\begin{align}\label{eq:Scalar}
  (\lambda_{\eps}-\lambda_N(\Omega))\langle u_{\eps} , \psi_{\eps}
  \rangle_{L^2(\Omega)} &= \lambda_N(\Omega)\langle
  u_N,V_{\eps}\rangle_{L^2(\Omega)}+\lambda_N(\Omega)\langle
  u_{\eps}-u_N,V_{\eps}\rangle_{L^2(\Omega)}\\
&=\lambda_N(\Omega)\langle
  u_N,V_{\eps}\rangle_{L^2(\Omega)}+o\left(c_{\eps}\right)\quad\text{as
  }\eps\to0^+.
\end{align}
On the other hand, since $\psi_{\eps}$ and $V_{\eps}$ are
$q$-orthogonal, we have
\begin{equation}\label{eq:CapScalar}
  c(\eps)=q(V_{\eps})=q(u_N-\psi_{\eps},V_{\eps})=q(u_N,V_{\eps})=\lambda_N(\Omega)\langle u_N,V_{\eps}\rangle_{L^2(\Omega)},
\end{equation}
using for the last equality the fact that $u_N$ is an eigenfunction of the quadratic form $q$, associated with the eigenvalue $\lambda_N(\Omega)$.
Reinjecting  \eqref{eq:CapScalar} into \eqref{eq:Scalar}, we finally obtain 
\begin{equation*}
  \lambda_{\eps}-\lambda_N(\Omega)=\frac{c_{\eps}+o(c_{\eps})}{\langle u_{\eps} , \psi_{\eps} \rangle}=c_{\eps}(1+o(1)),
\end{equation*}
as $\eps\to0^+$.
\end{proof}

\section{Continuity of the $u$-capacity}
In this appendix, we establish a
continuity result for the $u$-capacity with respect to concentration
  at zero capacity sets.

 \begin{prop}\label{p:conv_cap}
 If  $\{K_\eps\}_{\eps>0}$ is a family of
  compact sets contained in $\Omega\subset\RR^n$ concentrating to a compact set
  $K\subset\Omega$ with $\mbox{\rm Cap}_{\Omega}(K)=0$, then, for every
  $u\in H^1_0(\Omega)$, we have that 
$V_{K_\eps,u}\to V_{K,u}=0$ strongly in $H^1_0(\Omega)$ and
$\lim_{\eps\to 0^+}\mbox{\rm Cap}_{\Omega}(K_\eps,u)=\mbox{\rm Cap}_{\Omega}(K,u) =0$.
 \end{prop}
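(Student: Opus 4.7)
The plan is first to dispose of the easy part---$V_{K,u}=0$ and $\mbox{\rm Cap}_\Omega(K,u)=0$---and then to reduce the convergence claim to the construction of good admissible competitors in the variational problem \eqref{eq:fcap}. Since $\mbox{\rm Cap}_\Omega(K)=0$, every function in $H^1_0(\Omega)$ may be regarded as an element of $H^1_0(\Omega\setminus K)$, so the affine constraint $f-u\in H^1_0(\Omega\setminus K)$ in \eqref{eq:fcap} collapses to $f\in H^1_0(\Omega)$, on which the Dirichlet energy is minimised by $f\equiv 0$; this immediately gives $V_{K,u}=0$ and $\mbox{\rm Cap}_\Omega(K,u)=0$. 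Once the convergence $\mbox{\rm Cap}_\Omega(K_\eps,u)\to 0$ has been established, \eqref{eq:fcap2} and the Poincar\'e inequality in $H^1_0(\Omega)$ together upgrade it to the strong $H^1_0$-convergence $V_{K_\eps,u}\to 0$.

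For the main step I would reduce to case (ii) of Remark \ref{r:capcon} by introducing the closed tubular neighbourhoods $K_n:=\{x\in\overline\Omega:\mathrm{dist}(x,K)\le 1/n\}$, with open interiors $U_n:=\mathrm{int}(K_n)\supset K$. The $K_n$ are compact and decreasing with $\bigcap_n K_n=K$, so Remark \ref{r:capcon}\,(ii) yields $\mbox{\rm Cap}_\Omega(K_n)\to 0$. The associated capacitary potentials $V_{K_n}\in H^1_0(\Omega)$ therefore satisfy $0\le V_{K_n}\le 1$ by the maximum principle, $V_{K_n}=1$ a.e.\ on $U_n$, $\|\nabla V_{K_n}\|_{L^2(\Omega)}^2=\mbox{\rm Cap}_\Omega(K_n)\to 0$, and hence $V_{K_n}\to 0$ in $L^2(\Omega)$ by Poincar\'e and, along a subsequence, a.e.\ in $\Omega$. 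The concentration hypothesis guarantees that, for each fixed $n$, $K_\eps\subset U_n$ once $\eps$ is small enough.

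Assume first that $u\in H^1_0(\Omega)\cap L^\infty(\Omega)$. I would then test \eqref{eq:fcap} against $f_{\eps,n}:=V_{K_n}u$: the product rule for $H^1_0\cap L^\infty$ factors places it in $H^1_0(\Omega)$, and $f_{\eps,n}-u=(V_{K_n}-1)u$ vanishes on the open neighbourhood $U_n\supset K_\eps$, which locates it in $H^1_0(\Omega\setminus K_\eps)$. Expanding the gradient and using $0\le V_{K_n}\le 1$ gives
\[
\mbox{\rm Cap}_\Omega(K_\eps,u)\le\int_\Omega|\nabla(V_{K_n}u)|^2\,dx\le 2\int_\Omega V_{K_n}^2|\nabla u|^2\,dx+2\|u\|_{L^\infty(\Omega)}^2\,\mbox{\rm Cap}_\Omega(K_n).
\]
Sending $\eps\to 0$ first freezes the right-hand side; then, as $n\to\infty$, the first integral vanishes by dominated convergence ($V_{K_n}^2|\nabla u|^2$ is dominated by $|\nabla u|^2\in L^1(\Omega)$ and tends a.e.\ to $0$) and the second by the choice of $(K_n)_n$.

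To remove the boundedness assumption, I would approximate any $u\in H^1_0(\Omega)$ by $u_m\in C^\infty_{\rm c}(\Omega)$ in the $H^1_0$-norm. Since $V_{K_\eps,u_m}+(u-u_m)$ lies in $H^1_0(\Omega)$ and differs from $u$ by $V_{K_\eps,u_m}-u_m\in H^1_0(\Omega\setminus K_\eps)$, it is admissible in \eqref{eq:fcap}; combined with $(a+b)^2\le 2a^2+2b^2$ this produces
\[
\mbox{\rm Cap}_\Omega(K_\eps,u)\le 2\,\mbox{\rm Cap}_\Omega(K_\eps,u_m)+2\|\nabla(u-u_m)\|_{L^2(\Omega)}^2,
\]
so a standard $\eta/2+\eta/2$ argument transfers the bounded case to the general one. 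The main obstacle I anticipate is the bookkeeping check that $V_{K_n}u-u$ really sits in $H^1_0(\Omega\setminus K_\eps)$: this rests on the product of an $H^1_0(\Omega)$ function with a bounded $H^1_0(\Omega)$ function being back in $H^1_0(\Omega)$, and on the fact that $V_{K_n}\equiv 1$ on the open neighbourhood $U_n$ of $K_\eps$, which is a direct consequence of the PDE characterisation of the capacitary potential.
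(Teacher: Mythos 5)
Your argument is correct, but it takes a genuinely different route from the paper's. The paper proceeds by weak compactness: since $V_{K_\eps,u}$ minimises \eqref{eq:fcap}, its Dirichlet energy is bounded by that of $u$, so along a subsequence $V_{K_{\eps_k},u}\rightharpoonup V$ weakly in $H^1_0(\Omega)$; testing the equation with functions supported away from $K$ (hence, for small $\eps$, away from $K_\eps$) and using $H^1_0(\Omega\setminus K)=H^1_0(\Omega)$ identifies $V=0$, after which the identity $\int_\Omega|\nabla V_{K_\eps,u}|^2\,dx=\int_\Omega\nabla V_{K_\eps,u}\cdot\nabla u\,dx$ (obtained by testing with $\varphi=V_{K_\eps,u}-u$) converts weak convergence into convergence of the energies, hence strong convergence; subsequence-independence of the limit concludes. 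You instead build explicit admissible competitors $V_{K_n}u$ from the condenser potentials of shrinking tubular neighbourhoods of $K$, obtaining the quantitative bound $\mbox{\rm Cap}_{\Omega}(K_\eps,u)\le 2\int_\Omega V_{K_n}^2|\nabla u|^2\,dx+2\|u\|_{L^\infty(\Omega)}^2\mbox{\rm Cap}_{\Omega}(K_n)$, and conclude by dominated convergence plus a density step. Your route costs an extra truncation layer (the competitor $V_{K_n}u$ requires $u$ bounded) and leans on case (ii) of Remark \ref{r:capcon} (continuity of the condenser capacity along decreasing compacts), which the paper's proof does not need; in exchange it is constructive, gives an explicit rate in terms of $\mbox{\rm Cap}_{\Omega}(K_n)$, and avoids any compactness or subsequence extraction for the $V_{K_\eps,u}$ themselves. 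The steps you flag as delicate do go through: a function of $H^1_0(\Omega)$ vanishing a.e.\ on an open neighbourhood $U_n$ of the compact $K_\eps$ belongs to $H^1_0(\Omega\setminus K_\eps)$ (multiply approximating functions by $1-\chi$ with $\chi\in C^\infty_{\rm c}(U_n)$, $\chi\equiv1$ near $K_\eps$), and $V_{K_n}=1$ a.e.\ on $K_n$ follows from $V_{K_n}-\eta_{K_n}\in H^1_0(\Omega\setminus K_n)$. Just note explicitly that $K_n\subset\Omega$ only for $n$ large, which is all you use.
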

 \begin{proof}
   Testing equation \eqref{eq:4} for $V_{K_\eps,u}$ with
   $\varphi=V_{K_\eps,u}-u$ we obtain
 \begin{align}\label{eq:5}
\notag 0&=\int_{\Omega\setminus K_\eps}\nabla
V_{K_\eps,u}\cdot\nabla(V_{K_\eps,u}-u)\,dx\\
&=\int_{\Omega}\nabla
V_{K_\eps,u}\cdot\nabla(V_{K_\eps,u}-u)\,dx=\int_{\Omega}|\nabla
V_{K_\eps,u}|^2\,dx-\int_{\Omega}\nabla
V_{K_\eps,u}\cdot\nabla u\,dx.
 \end{align}
Since $V_{K_\eps,u}$ attains the minimum defining
$\mbox{\rm Cap}_{\Omega}(K_\eps,u)$, we have that $\int_\Omega|\nabla
V_{K_\eps,u}|^2\,dx\leq \int_\Omega|\nabla u|^2\,dx$, so that
$\{V_{K_\eps,u}\}_{\eps>0}$ is bounded in $H^1_0(\Omega)$. Hence,
along a sequence $\eps_k\to 0^+$, $V_{K_{\eps_k},u}\rightharpoonup V$
weakly in $H^1_0(\Omega)$ for some $V\in H^1_0(\Omega)$. Since
$\mbox{\rm Cap}_{\Omega}(K)=0$, we have that
$H^1_0(\Omega)=H^1_0(\Omega\setminus K)$ (see \cite[Proposition
2.1]{Courtois1995Holes}), hence $u-V\in H^1_0(\Omega\setminus
K)$. Moreover, for every $\varphi\in C^\infty_{\rm c}(\Omega\setminus
K)$, we have that $\varphi\in C^\infty_{\rm c}(\Omega\setminus
K_\eps)$ for $\eps$ sufficiently small, hence, passing to the limit in
\eqref{eq:4} for $V_{K_{\eps_k},u}$ as $k\to+\infty$, we obtain that   
\[
\int_{\Omega}\nabla
V\cdot\nabla\varphi\,dx=0.
\]
Hence $\int_{\Omega}\nabla V\cdot\nabla\varphi\,dx=0$ for every
$\varphi\in H^1_0(\Omega\setminus K)=H^1_0(\Omega)$. It follows that
$V=V_{K,u}=0$. Moreover, passing to the limit in  \eqref{eq:5}, we
obtain that 
\begin{equation*}
\lim_{k\to +\infty}\mbox{\rm Cap}_{\Omega}(K_{\eps_k},u)=
\lim_{k\to +\infty}\int_{\Omega}|\nabla
V_{K_{\eps_k},u}|^2\,dx=\lim_{k\to +\infty}\int_{\Omega} \nabla V_{K_{\eps_k},u}\cdot\nabla u\,dx=
\int_{\Omega} \nabla V\cdot\nabla u\,dx=0
\end{equation*}
We conclude that $\mbox{\rm Cap}_{\Omega}(K_{\eps_k},u)\to0$ and
$V_{K_{\eps_k},u}\to 0$ strongly in $H^1_0(\Omega)$ as $k\to
+\infty$. Since such limits do not depend on the subsequence, we reach
the conclusion.
 \end{proof}

 \paragraph{Acknowledgements} The authors are partially supported by
 the project ERC Advanced Grant 2013 n. 339958: ``Complex Patterns for
 Strongly Interacting Dynamical Systems -- COMPAT''. V. Felli is
 partially supported by PRIN-2012-grant ``Variational and perturbative
 aspects of nonlinear differential problems''.


\begin{thebibliography}{1}

\bibitem{abatangelo2015sharp}
L.~Abatangelo and V.~Felli.
\newblock Sharp asymptotic estimates for eigenvalues of {A}haronov-{B}ohm
  operators with varying poles.
\newblock {\em Calc. Var. Partial Differential Equations}, 54(4):3857--3903,
  2015.
  
\bibitem{abatangelo2016leading}  
L.~Abatangelo and V.~Felli.
\newblock On the leading term of the eigenvalue variation for Aharonov-Bohm operators with a moving pole. 
\newblock {\em SIAM J. Math. Anal.}, 48(4):2843--2868, 2016.

\bibitem{AFL-2}
L.~Abatangelo, V.~Felli, and C. L\'{e}na.
\newblock On {A}haronov-{B}ohm
  operators with two colliding poles.
\newblock {\em Preprint 2016}.


\bibitem{abatangelo2016boundary} 
L.~Abatangelo, V.~Felli, B.~Noris, and M.~Nys.
\newblock Sharp boundary behavior of eigenvalues for Aharonov-Bohm operators with varying poles. 
\newblock {\tt arXiv:1605.09569}. 

\bibitem{bertrand-colbois} 
J. Bertrand and B. Colbois.
\newblock Capacit\'e et ine\'egalite\'e de Faber–Krahn dans $\RR^n$. 
\newblock {\em Journal of Functional Analysis}, 232(1):1--28, 2006.

\bibitem{Besson1985}
G.~Besson.
\newblock Comportement asymptotique des valeurs propres du Laplacien dans un domaine avec un trou.
\newblock {\em  Bull. Soc. Math. France}, 113(2):211--230, 1985.  
  

\bibitem{BN-He-HH2009}
V. Bonnaillie-No{\"e}l, B.  Helffer, and T.  Hoffmann-Ostenhof.
\newblock Aharonov-Bohm Hamiltonians, isospectrality and minimal
partitions.
\newblock {\em J. Phys. A}, 42, 185203, 2009.

\bibitem{bonnaillie2014eigenvalues}
V. Bonnaillie-No{\"e}l, B.~Noris, M.~Nys, and S.~Terracini.
\newblock On the eigenvalues of {A}haronov--{B}ohm operators with varying
  poles.
\newblock {\em Analysis \& PDE\/}, 7(6):1365--1395, 2014.

\bibitem{capriani} 
G. M. Capriani.
\newblock The Steiner rearrangement in
    any codimension.
\newblock {\em Calc. Var. Partial Differential Equations},
49(1):517--548, 2014.



\bibitem{chavel-feldman}
I.  Chavel and E. Feldman.
\newblock Spectra of manifolds less a small domain. 
\newblock {\em Duke Math. J.}, 56(2):399--414, 1988.

\bibitem{ColinDeVerdiere1986Multiplicite}
Y.~Colin~de Verdi{\`e}re.
\newblock Sur la multiplicit\'e de la premi\`ere valeur propre non nulle du
  Laplacien.
\newblock {\em Comment. Math. Helv.}, 61(2):254--270, 1986.

\bibitem{Courtois1995Holes}
Gilles Courtois.
\newblock Spectrum of manifolds with holes.
\newblock {\em J. Funct. Anal.}, 134(1):194--221, 1995.

\bibitem{FFT} 
V. Felli, A. Ferrero, and S. Terracini.
 \newblock  Asymptotic
    behavior of solutions to Schr\"odinger equations near an isolated
    singularity of the electromagnetic potential.
\newblock {\em  Journal of the European Mathematical Society}, 13(1):119--174, 2011.


\bibitem{Flucher1995}
M.~Flucher.
\newblock Approximation of {D}irichlet eigenvalues on domains with small holes.
\newblock {\em J. Math. Anal. Appl.}, 193(1):169--199, 1995.

\bibitem{Helffer2013}
Bernard Helffer.
\newblock {\em Spectral theory and its applications}, volume 139 of {\em
	Cambridge Studies in Advanced Mathematics}.
\newblock Cambridge University Press, Cambridge, 2013.


\bibitem{lena2015} 
C.~L\'{e}na, 
\newblock Eigenvalues variations for Aharonov-Bohm operators. 
\newblock {\em Journal of Mathematical Physics} 56,  011502, 2015.

\bibitem{noris2015aharonov}
Noris, B., M.~Nys, and S.~Terracini.
\newblock On the {A}haronov--{B}ohm operators with varying poles: The boundary
  behavior of eigenvalues.
\newblock {\em Communications in Mathematical Physics}, 339(3):1101--1146, 2015.


\bibitem{Ozawa1981Duke}
S.~Ozawa.
\newblock Singular variation of domains and eigenvalues of the Laplacian.
\newblock {\em  Duke Math. J.}, 48(4):767--778, 1981.

\bibitem{RT}
J.~Rauch and M.~Taylor.
\newblock Potential and scattering theory on wildly perturbed domains.
\newblock {\em  J. Funct. Anal.}, 18, 27--59, 1975.

\bibitem{ReeSim75}
M.~Reed and B.~Simon.
\newblock {\em Methods of {M}odern {M}athematical physics. {II}. {F}ourier
	{A}nalysis, {S}elf-{A}djointness}.
\newblock Academic Press, New York, 1975.
    

\bibitem{Sokolov2011}
D.D.~Sokolov (originator).
\newblock Elliptic coordinates.
\newblock Encyclopedia of Mathematics,
http://www.encyclopediaofmath.org/index.php/Elliptic{\_}coordinates, 2011.
\newblock {Accessed: 2015-07-27}.



\end{thebibliography}
\end{document}